\documentclass[11pt,leqno]{article}
\usepackage{hyperref}
\usepackage{soul}
\usepackage{appendix}
\usepackage{pdfsync}
\usepackage{pifont}
\usepackage{rotating}
\usepackage[doc]{optional}
\usepackage{srcltx}
\usepackage{xcolor}
\definecolor{labelkey}{rgb}{0,0.08,0.45}
\definecolor{rekey}{rgb}{0,0.6,0.0}
\definecolor{Brown}{rgb}{0.45,0.0,0.05}
\usepackage{exscale,relsize}
\usepackage{amsmath}
\usepackage{amsfonts}
\usepackage{amssymb}
\usepackage{calc}
\usepackage{theorem}
\usepackage{pifont}      
\usepackage{graphicx}
\oddsidemargin -0.1cm
\textwidth  16.5cm
\topmargin  0.0cm
\headheight 0.0cm
\textheight 21.0cm
\parindent  4mm
\parskip    10pt
\tolerance  3000

\textwidth 15.0cm \textheight 22.5cm \oddsidemargin 0.1 cm
\evensidemargin 0.1 cm \topmargin -0.3 cm
\newtheorem{theorem}{Theorem}[section]
\newtheorem{lemma}[theorem]{Lemma}
\newtheorem{fact}[theorem]{Fact}
\newtheorem{corollary}[theorem]{Corollary}
\newtheorem{proposition}[theorem]{Proposition}
\newtheorem{definition}[theorem]{Definition}

\theoremstyle{plain}{\theorembodyfont{\rmfamily}
}
\theoremstyle{plain}{\theorembodyfont{\rmfamily}
}
\theoremstyle{plain}{\theorembodyfont{\rmfamily}
}
\theoremstyle{plain}{\theorembodyfont{\rmfamily}
\newtheorem{example}[theorem]{Example}}
\theoremstyle{plain}{\theorembodyfont{\rmfamily}
\newtheorem{remark}[theorem]{Remark}}

\theoremstyle{plain}{\theorembodyfont{\rmfamily}
}

\def\hat{\widehat}

\def\bd{\mbox{\rm bd}\,}

\def\inte{\mbox{\rm int}\,}

\def\dim{\mbox{\rm dim}\,}
\def\dom{\mbox{\rm dom}\,}

\def\bd{\mbox{\rm bd}\,}

\newcommand{\RR}{\ensuremath{\mathbb R}}

\newcommand{\RX}{\ensuremath{\,\left]-\infty,+\infty\right]}}

\newcommand{\NN}{\ensuremath{\mathbb N}}

\newcommand{\di}{\ensuremath{\operatorname{dist}}}

\newcommand{\aff}{\operatorname{aff}}

  \newcommand{\qed}{\hspace*{\fill}$\Box$\medskip}
  \newcommand{\qede}{\hspace*{\fill}$\Diamond$\medskip}

\begin{document}
\title{\sffamily{Analysis of the convergence  rate for the cyclic projection algorithm applied to basic semi-algebraic convex sets }}

  \author{Jonathan M. Borwein\thanks{CARMA, University of Newcastle,
Newcastle, New South Wales 2308, Australia. E-mail:
\texttt{jonathan.borwein@newcastle.edu.au}. Laureate Professor at
the University of Newcastle and Distinguished Professor at  King
Abdul-Aziz University, Jeddah.},\; Guoyin Li \thanks{Department of Applied Mathematics, University of New South Wales,
Sydney 2052, Australia. E-mail: \texttt{g.li@unsw.edu.au}.},\;
and  Liangjin Yao\thanks{CARMA, University of Newcastle,
 Newcastle, New South Wales 2308, Australia.
E-mail:  \texttt{liangjin.yao@newcastle.edu.au}.} }

\date{Revised version: November 16 2013}
\maketitle
\begin{abstract} \noindent
In this paper, we study the rate of convergence of the cyclic projection algorithm applied to  finitely many basic
semi-algebraic convex sets.  We establish an explicit convergence rate estimate
 which relies on the maximum degree of the polynomials that generate the basic semi-algebraic convex sets and the dimension of the underlying space. We achieve our results by exploiting the algebraic structure of the basic semi-algebraic convex sets.

\end{abstract}

\noindent {\bfseries 2010 Mathematics Subject Classification:}\\
{Primary  41A25, 90C25;
Secondary 41A50, 90C31

}

\noindent {\bfseries Keywords:}
 Cyclic projection algorithm, convex polynomial, distance function,
Fej\'{e}r
monotone sequence, H\"{o}lderian  regularity, {\L}ojasiewicz's inequality,
projector operator, basic semi-algebraic convex set,
von
Neumann alternating projection method.

\section{Introduction}

A very common  problem in diverse areas of mathematics and engineering consists of trying to find a point in the intersection
of closed convex sets $C_i$, $i=1,\ldots,m$. This problem is often referred to as the convex feasibility problem. One popular
method for solving the convex feasibility problem is the so-called cyclic projection algorithm. Mathematically, the cyclic projection
algorithm is formulated as follows.
Given finitely many closed convex sets $C_1, C_2,\cdots, C_m$ in $\mathbb{R}^n$ with $\bigcap_{i=1}^m C_i\neq\varnothing$, let
 $x_0\in \mathbb{R}^n$ and
$P_i:=P_{C_i}, i=1,2,\cdots,m$, where $P_{C_i}$ denotes the Euclidean projection to the set $C_i$.
 The sequence of \emph{cyclic projections}, $(x_k)_{k\in\NN}$, is defined by
\begin{align}
x_1:=P_1 x_0, x_2:=P_2 x_1,\cdots, x_m:=P_mx_{m-1}, x_{m+1}:=P_1x_{m}\ldots\label{Cycse:1}
\end{align}
When $m=2$, the cyclic projection method  reduces to  the well known  von Neumann alternating projection method (APM) (see \cite{vonnum:1} and also \cite{Heinz_Set,BC2011,BLPW,KoRei} for some
recent developments). The cyclic projection method has attracted much recent interest  due to its simplicity and to numerous applications to diverse areas such as engineering and the physical sciences, see \cite{Bausc2, BaComt,Heinz, BrBSi1,Deus}
and the references therein.
 
 The convergence properties of cyclic projection methods have been examined by many researchers. In particular, Bregman \cite{Bregman} showed that the sequence $(x_k)_{k\in\NN}$ generated by the cyclic projection algorithm, always converges to a point in $C$.
Moreover, linear convergence of cyclic projection algorithm onto convex sets with regular intersections was shown in \cite{GPR}. On the other hand, for
convex sets with irregular intersections (for example, when the intersection is a singleton), the cyclic projection algorithm may not exhibit linear convergence even for simple two dimensional cases as observed by \cite[Example 5.3]{BBJAT1} (see Section~\ref{s:final} for more examples).
This then raises the following basic question:
\begin{quote}\emph{Can we estimate the convergence rate of the cyclic projection algorithm for
convex sets with possibly irregular intersections?}
\end{quote}

In this paper, we provide an answer for the above question by focusing on the case where  each set $C_i$ is a \emph{basic semi-algebraic convex set} in $\mathbb{R}^n$ in the sense that there exist $\gamma_i \in\NN$ and  convex polynomial functions, $g_{i\, j}, j=1,\ldots,\gamma_i$ such that
\begin{align*}C_i=\{x  \in \mathbb{R}^n\mid g_{i\, j}(x) \le 0, j=1,\cdots,\gamma_i\}.\end{align*}
The main motivation for examining  basic semi-algebraic convex sets lies with the following two facts. First, as recently established in \cite{Attouch1,Attouch2},  optimization problems involving semi-algebraic structure have a number of remarkable properties (such as the
celebrated Kurdyka-$\L$ojasiewicz inequality) which enables us to obtain useful qualitative information of the problem. Second, the class of basic semi-algebraic convex sets is a broad class of convex sets which covers polyhedra and convex sets described by convex quadratic functions. Additionally, the structure can often  be  relatively easily identified \cite{Nie}.

By exploiting the precise algebraic structure, we are able to provide an explicit rate for the cyclic projection algorithm applied to finitely many basic semi-algebraic convex sets \emph{without any regularity conditions}. More precisely, let $C_i$ be basic semi-algebraic convex sets generated by  polynomials in $\mathbb{R}^n$ with degree at most $d \in \mathbb{N}$.  
We show that the sequence of cyclic projections $(x_k)_{k\in\NN}$ \eqref{Cycse:1}  converges (at least) at the rate of
$\frac{1}{k^{\rho}}$ when $d>1$, where $\rho:=\frac{1}{\min\big\{(2d-1)^n-1,\, 2\beta(n-1)d^n-2\big\}}$ and $\beta(s)$ denotes the \emph{central binomial coefficient} with respect to $s$---which is given by ${s \choose {[s/2] }}$. \footnote{Here, $[a]$ denotes the integer part of $a$}  When $d=1$, the  sequence of cyclic projections converges linearly.

The remainder of this paper is organized as follows. In
Section~\ref{s:aux}, we collect notation and auxiliary results for future
use and for the reader's convenience.
In Section \ref{s:supphol}, we give a H\"{o}lderian  regularity result for  finitely many basic semi-algebraic convex sets.
 The proof of our main
result (Theorem~\ref{TheMTR:1}) forms the bulk of Section~\ref{s:main}. In Section~\ref{s:final}, we explore various concrete examples. Finally, we
end the paper with some conclusions and open questions.

\setcounter{equation}{0}
\section{Preliminaries and auxiliary results}\label{s:aux}

We assume throughout that $\mathbb{R}^n$ is a Euclidean space with the norm $\|\cdot\|$ and  inner product $\langle\cdot,\cdot\rangle$, where $n\in \NN:=\{1,2,3,\cdots\}$. We reserve $d\in\NN$. We denote by $\mathbb{B}(x,\varepsilon):=\{y\in\RR^n\mid\|y-x\|<\varepsilon\}$.
 We adopt standard notation used in these
books \cite{BC2011,BorVan, ph,
RockWets,Zalinescu}.

 Given a subset $C$ of $\RR^n$, $\inte C$ is the
\emph{interior} of $C$, $\bd{C}$ is the \emph{boundary} of $C$,
$\aff C$ is the \emph{affine hull} of $C$ and $\overline{C}$ is the
norm \emph{closure} of $C$. The \emph{orthogonal set} is $C^{\bot}:= \{x^*\in \RR^n
\mid(\forall c\in C)\, \langle x^*, c\rangle=0\}$. The \emph{distance function}  to   the set $C$, written  $\di(\cdot,C)$,
is defined by
$x\mapsto \inf_{c\in C}\|x-c\|$. The \emph{projector operator}  to   the set $C$, denoted by $P_C$,
is defined by
\begin{align*}P_C(x):=\{c\in C\mid \|y-c\|=\di(x, C)\},\quad\forall x\in \RR^n.
\end{align*} Let $D\subseteq\RR^n$. The distance of two sets: $C$ and $D$,
 is $\di(C,D):=\inf_{c\in C, d\in D}\|c-d\|$.
Given $f\colon X\to \RX$, we set
$\dom f:= f^{-1}(\RR)$. We say $f$ is \emph{proper} if $\dom f\neq\varnothing$.
Let $f$ be a proper function on
$\mathbb{R}^n$. Its associated \emph{recession function} $f^{\infty}$ is
defined, for any $v \in \mathbb{R}^n$, by
\begin{equation*} f^{\infty}(v):=\liminf_{t \rightarrow \infty,
\, v'\rightarrow v}\frac{f(tv')}{t}.
\end{equation*}
If $f$ is further assumed to be lower semicontinuous and convex, one
has (see \cite[Proposition 2.5.2]{Auslender})
\begin{equation} \label{eq:convex_recession_function}
f^{\infty}(v)=\lim_{t \rightarrow
\infty}\frac{f(x+tv)-f(x)}{t}=\sup_{t>0}\frac{f(x+tv)-f(x)}{t}\quad
\mbox{ for all } x\in {\rm dom}f.
\end{equation}

\subsection{Notation and facts on polynomials}
Recall that $f\colon\mathbb{R}^n\rightarrow\mathbb{R}$ is a {\em polynomial} if there exists a number $r\in\mathbb{N}$ such that
$$
f(x):=\sum_{0\le|\alpha|\le r}\lambda_{\alpha}x^{\alpha},
$$
where $\lambda_{\alpha}\in \mathbb{R}$, $x=(x_1,\cdots,x_n)$, $x^{\alpha}:=x_1^{\alpha_1}\cdots x_n^{\alpha_n}$, $\alpha_i\in\mathbb{N}\cup \{0\}$, and $|\alpha|:=\sum_{j=1}^{n}\alpha_j$.
The corresponding constant $r$ is called the {\em degree} of $f$.

Next let us recall a useful property of polynomial functions.
\begin{fact}\emph{(See \cite[Remark 4]{Auslender2000})}\label{lemma:3.1}
Let $f:\RR^n\rightarrow\RR$ be  polynomial,  and \{$x_1,x_2\}\subseteq\RR^n$. If $f$ is constant on
$D:=[x_1,x_2]$, then $f$ is constant on ${\rm aff}D$.
\end{fact}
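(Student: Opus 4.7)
The plan is to reduce the multivariate statement to the univariate identity theorem for polynomials. First, I would introduce the parametrization $\varphi\colon\RR\to\RR^n$ given by $\varphi(t):=x_1+t(x_2-x_1)$, so that $\varphi([0,1])=D=[x_1,x_2]$ and $\varphi(\RR)=\aff D$. Then I would define the composite $g:=f\circ\varphi\colon\RR\to\RR$. Because each coordinate of $\varphi$ is an affine function of $t$, substituting $\varphi(t)$ into the polynomial expression for $f$ yields a polynomial in the single variable $t$; its degree is at most the degree of $f$.

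Next I would use the hypothesis that $f$ is constant on $D$: writing $c:=f(x_1)$, we have $g(t)=c$ for every $t\in[0,1]$. Hence the univariate polynomial $h(t):=g(t)-c$ has infinitely many zeros (the whole interval $[0,1]$). By the standard fact that a nonzero polynomial of degree $r$ has at most $r$ real roots, $h$ must be the zero polynomial, i.e.\ $g(t)=c$ for all $t\in\RR$.

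Translating back, this gives $f(\varphi(t))=c$ for every $t\in\RR$, which is exactly the statement that $f$ is constant on $\aff D$. I do not anticipate any real obstacle here; the only thing to verify carefully is that the composition of a polynomial with an affine map is again a polynomial, which is immediate from expanding $x^\alpha$ after substitution. The argument in fact works verbatim for any subset of $D$ containing infinitely many points, not just the full segment.
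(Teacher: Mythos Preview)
Your argument is correct: the parametrization $\varphi(t)=x_1+t(x_2-x_1)$ reduces the claim to the identity theorem for univariate polynomials, and the only nontrivial point---that $f\circ\varphi$ is a polynomial in $t$---is indeed immediate from the affine substitution. The degenerate case $x_1=x_2$ is also covered, since then $\aff D=\{x_1\}$ and the conclusion is trivial.

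There is nothing to compare against: the paper does not supply its own proof of this fact but simply records it with a citation to \cite[Remark~4]{Auslender2000}. Your proposal thus fills in exactly the standard justification that the reference is meant to point to.
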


We now summarize some basic properties of convex polynomials that will be used later. The first property is a Frank-Wolfe type result for convex polynomial optimization
problems while the second one is a directional-constancy property for a convex polynomial.
\begin{fact} [Belousov]\label{lemma:2.4}\emph{(See \cite[Theorem~13, Section 4, Chapter II]{Belousov} or \cite[Theorem~3]{convex_polynomial} and \cite{quasi_convex}.)}
Let $f$ be a convex polynomial on $\mathbb{R}^{n}$.  Consider a set $D:=\{x\mid g_i(x) \le 0, i=1,\cdots,m\}$, where each $g_i$, $i=1,\cdots,m$, is a convex polynomial on $\mathbb{R}^n$. Suppose that $\inf_{x \in D}
f(x)>-\infty$. Then $f$ attains its minimum on $D$.
\end{fact}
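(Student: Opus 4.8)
The plan is to prove the Frank–Wolfe type statement for convex polynomials by reducing the general constrained case to the unconstrained one and then exploiting the recession-function description \eqref{eq:convex_recession_function} together with the directional-constancy principle implicit in Fact~\ref{lemma:3.1}. First I would record the hypothesis $\mu:=\inf_{x\in D}f(x)>-\infty$ and observe that, since $f$ is continuous, it suffices to show that some minimizing sequence $(x_k)\subseteq D$ stays bounded; then a subsequence converges to a point $\bar x\in D$ (here we use that $D$ is closed, being an intersection of sublevel sets of continuous functions) with $f(\bar x)=\mu$. So the entire task is to rule out the escape of minimizing sequences to infinity.

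Suppose for contradiction that every minimizing sequence is unbounded; pick one, $(x_k)$, with $\|x_k\|\to\infty$ and $f(x_k)\to\mu$, and (passing to a subsequence) let $v:=\lim x_k/\|x_k\|$, so $\|v\|=1$. The key point is to analyze the behaviour of $f$ and of the constraint polynomials $g_i$ along the recession direction $v$. Because each $g_i$ is convex and $x_k\in D$, one gets $g_i^\infty(v)\le 0$ for all $i$; by the sup-formula in \eqref{eq:convex_recession_function} this forces $g_i(z+tv)\le g_i(z)$ for every $z\in\RR^n$ and $t>0$, i.e.\ the ray $z+\RR_+v$ decreases each $g_i$. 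Similarly $f^\infty(v)\le 0$ (otherwise $f(x_k)$ would blow up along the sequence, contradicting $f(x_k)\to\mu$), and since $f$ is bounded below on $D$ while the ray from any feasible point stays feasible, in fact $f^\infty(v)=0$, so $t\mapsto f(z+tv)$ is nonincreasing for $z\in D$. Now I would use a one-variable polynomial argument: for fixed feasible $z$, the function $t\mapsto f(z+tv)$ is a convex polynomial in $t$ that is bounded below and nonincreasing, hence must be eventually constant, and being a polynomial it is \emph{constant} on all of $\RR$ — this is exactly the mechanism of Fact~\ref{lemma:3.1} applied on the line $\aff\{z, z+v\}$. The same reasoning shows each $g_i(z+tv)$ is constant in $t$.

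With $f$ and all $g_i$ constant along every line in direction $v$ through feasible points, I would then pass to the quotient: fix a feasible $\bar x$ (any point of the minimizing sequence, or its projection issues aside, simply $x_0$), and consider the restriction of the problem to the affine subspace $\bar x + v^\perp$, or more efficiently argue by downward induction on the number of ``active'' recession directions. Concretely, the point $x_k - s_k v$ for a suitable scalar $s_k$ lies in $D$ (feasibility is preserved since the $g_i$ are constant in the $v$-direction), has the same objective value up to $o(1)$, and can be chosen with one fewer unbounded coordinate; iterating, or invoking the induction hypothesis in dimension $n-1$ on the slice, produces a bounded minimizing sequence, the desired contradiction. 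This yields a minimizer and completes the proof.

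The main obstacle I anticipate is the careful bookkeeping in the reduction/induction step: one must verify that subtracting off the recession component genuinely produces \emph{feasible} points with \emph{controlled} objective value, and that the dimension count strictly decreases so the induction terminates. The delicate part is that ``constant along $v$'' must be established for all the $g_i$ simultaneously and globally (not just asymptotically), which is precisely where the polynomial rigidity from Fact~\ref{lemma:3.1} — a convex polynomial that is eventually constant on a line is constant on that line — does the real work, upgrading an asymptotic monotonicity statement into an exact algebraic one. Once that upgrade is in hand the geometric reduction is routine, but stating it cleanly (choosing the right affine slice and matching up minimizing sequences) requires care.
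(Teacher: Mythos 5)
The overall strategy---extract an asymptotic direction $v$ from an unbounded minimizing sequence, show $f^\infty(v)=0$ so that $f$ is constant along $v$, and reduce dimension by induction---is the right one and matches the literature (the paper cites this result from Belousov and Belousov--Klatte and does not reprove it). However, there is a genuine gap at the step where you assert ``\emph{the same reasoning shows each $g_i(z+tv)$ is constant in $t$}.'' It does not. For $f$ you obtained $f^\infty(v)=0$ from two one-sided bounds: $f^\infty(v)\le 0$ from $f(x_k)\to\mu$ with $\|x_k\|\to\infty$, and $f^\infty(v)\ge 0$ from the fact that $f$ is bounded \emph{below} on $D$ while rays $z+\RR_+v$ stay in $D$. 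For $g_i$ you only have the first half, $g_i^\infty(v)\le 0$ (from $g_i(x_k)\le 0$); there is no lower bound to push against, since $g_i$ is bounded \emph{above} by $0$ on $D$, not below. A nonincreasing polynomial bounded above need not be constant. Concretely, take $D=\{(x,y):\,y\ge x^2\}$, $f(x,y)=x^2$, and the minimizing sequence $x_k=(0,k)$: here $v=(0,1)$, $f^\infty(v)=0$, but $g_1(x,y)=x^2-y$ has $g_1^\infty(v)=-1<0$ and $g_1$ is certainly not constant along $v$. Consequently $D$ is not a cylinder $D=D+\RR v$, and both of your proposed reductions fail as stated: the slice $D\cap(\bar x+v^\perp)$ need not capture the infimum (the $v^\perp$-component of a feasible point need not be feasible), and the pullback $x_k-s_kv$ is not justified by constancy of the $g_i$'s.

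The fix is to distinguish two kinds of constraints at the recession direction: $I_0:=\{i:\,g_i^\infty(v)=0\}$ and $I_-:=\{i:\,g_i^\infty(v)<0\}$. For $i\in I_0$, Fact~\ref{Factinft:1} gives genuine constancy of $g_i$ along $v$. For $i\in I_-$, $g_i(z+tv)\to-\infty$ as $t\to\infty$ for every $z$, so these constraints are eventually slack along $v$. Let $D_0:=\{x:\,g_i(x)\le 0,\ i\in I_0\}\supseteq D$. Then $\inf_{D_0}f=\inf_D f$: the inequality $\le$ is trivial, and for $\ge$ note that if $x\in D_0$ then $x+tv$ satisfies the $I_0$-constraints (constancy) and, for $t$ large, also the $I_-$-constraints, so $x+tv\in D$ for large $t$ while $f(x+tv)=f(x)$; hence $f(x)\ge\inf_D f$. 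Now $D_0$ and $f$ are both invariant under translation by $v$, so one can quotient by $\RR v$ (equivalently slice by a transversal hyperplane) and invoke the inductive hypothesis in dimension $n-1$. This is the step your write-up elides: you must first discard the constraints that are \emph{not} constant along $v$, justify that this does not lower the infimum, and only then slice. Without that step the argument does not go through.
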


\begin{fact} \label{Factinft:1}\emph{(See \cite[Proposition~3.2.1]{Auslender}.)}
    Let $f$ be a convex polynomial on $\mathbb{R}^{n}$ and $v\in\RR^n$.
Assume that $f^{\infty}(v)=0$. Then $f(x+tv)=f(x)$ for all $t \in \mathbb{R}$ and for all $x \in \mathbb{R}^n$.
\end{fact}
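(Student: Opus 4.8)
The plan is to restrict $f$ to the line $t\mapsto x+tv$, turning the claim into a statement about one-variable convex polynomials, and then let the recession-function formula \eqref{eq:convex_recession_function} do the work, combined with the elementary fact that a convex real polynomial of degree at least two is coercive. First, since a convex polynomial is lower semicontinuous and convex with full domain, \eqref{eq:convex_recession_function} holds at every point, so for each $x\in\RR^n$ we get $0=f^{\infty}(v)=\sup_{t>0}\frac{f(x+tv)-f(x)}{t}$; in particular $f(x+tv)\le f(x)$ for all $t\ge 0$. Now fix $x$ and set $\phi(t):=f(x+tv)$, a convex polynomial in the single variable $t$; by the bound just obtained, $\phi$ is bounded above on $[0,\infty)$ by $\phi(0)$.

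The next step is to argue that $\phi$ must be affine. A convex real polynomial of degree at least two necessarily has even degree and positive leading coefficient, hence tends to $+\infty$ as $t\to+\infty$; since our $\phi$ is bounded above on $[0,\infty)$ it cannot behave this way, so $\deg\phi\le 1$, say $\phi(t)=b+ct$, and boundedness above on $[0,\infty)$ forces $c\le 0$. It then remains only to upgrade $c\le 0$ to $c=0$, which is where \eqref{eq:convex_recession_function} is used a second time: $0=f^{\infty}(v)=\lim_{t\to\infty}\frac{f(x+tv)-f(x)}{t}=\lim_{t\to\infty}\frac{\phi(t)-\phi(0)}{t}=c$. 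Hence $c=0$, $\phi$ is the constant polynomial $\phi(0)=f(x)$, and so $f(x+tv)=f(x)$ for all $t\in\RR$; as $x\in\RR^n$ was arbitrary, we are done.

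The only genuine obstacle, and the sole place where the polynomial hypothesis enters, is the reduction of $\phi$ to an affine function: it rests on the observation that a convex polynomial bounded above on a half-line cannot have degree $\ge 2$. This is essential, since the conclusion fails for general convex functions (for example $t\mapsto e^{-t}$ has recession value $0$ in the direction $v=1$ yet is nowhere constant along it). Everything else is a routine unwinding of the definition of the recession function through \eqref{eq:convex_recession_function}; in particular the heavier tools of Fact~\ref{lemma:2.4} or Fact~\ref{lemma:3.1} are not needed for this statement.
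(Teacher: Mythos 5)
Your proof is correct. The paper states this result as a Fact cited to Auslender's Proposition~3.2.1 without reproducing an argument, so there is no in-paper proof to compare against; your self-contained derivation serves as a clean substitute. The crux of your argument --- that a univariate convex polynomial bounded above on a half-line must have degree at most one, because a convex polynomial of degree $\ge 2$ necessarily has even degree with positive leading coefficient and hence is coercive --- is exactly where the polynomial hypothesis enters, and your observation that $t\mapsto e^{-t}$ refutes the conclusion for general convex functions confirms that this step cannot be bypassed. Passing to the affine case and reading off the slope via \eqref{eq:convex_recession_function} then finishes it; the reasoning is sound at every step.
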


\subsection{Notation and facts on semialgebraic sets/functions}
Following \cite{real},
a set $D \subseteq \mathbb{R}^n$ is said to be
\emph{semi-algebraic} if
$$D: =\bigcup_{i=1}^{l} \, \bigcap_{j=1}^{s}\{x \in \RR^n\mid f_{ij}(x)=0, h_{ij}(x)\leq0\}$$
for some integers $l,s$ and some polynomial functions $f_{ij}, \,
h_{ij}$ on $\mathbb{R}^{n}$ $(1 \le i\le l, \, 1 \le j \le s)$.
Moreover, a function $f:\mathbb{R}^{n} \rightarrow \mathbb{R}$ is
said to be \emph{semi-algebraic} if its \emph{graph} ${\rm gph}f:=\{(x,f(x))\mid x \in
\mathbb{R}^n\}$ is semi-algebraic.

We now summarize below some basic properties of semi-algebraic sets and
semi-algebraic functions. These properties will be useful for our later  work.

\begin{fact} The following statements hold (the properties (P1) and (P4) are direct from the definitions).
\begin{enumerate}
\item[\text{(P1)}] Any polynomial is a semi-algebraic function.

\item[\text{(P2)}] \emph{(See \cite[Proposition 2.2.8]{real}.)} Let $D$ be a semi-algebraic
set. Then $\di(\cdot,D)$ is a
semi-algebraic function.
\item[\text{(P3)}] \emph{(See \cite[Proposition 2.2.6]{real}.)} If $f,\, g$ are
semi-algebraic functions on $\mathbb{R}^n$ and $\lambda \in \mathbb{R}$
then $f+g$, $\lambda f$, $\max\{f,g\}$, $fg$ are  semi-algebraic.
\item[\text{(P4)}]  If $f_i$ are polynomials, $i=1,\ldots,m$,
 and $\lambda \in \mathbb{R}$, then the sets $\{x\mid f_i(x)=\lambda, i=1,\ldots,m\}$,
 $\{x\mid f_i(x) \le \lambda, \, i=1,\ldots,m\}$ are
semi-algebraic sets.
\item[\text{(P5)}] (\rm {\L}ojasiewicz's inequality)\emph{(See \cite[Corollary~2.6.7]{real}.)} If
$\phi,\psi$ are two continuous semi-algebraic functions on
compact semi-algebraic set $K \subseteq \mathbb{R}^n$ such that
$\emptyset \neq \phi^{-1}(0) \subseteq \psi^{-1}(0)$ then there
exist constants $c>0$ and $\tau \in (0,1]$ such that
\[
 |\psi(x)| \le  c |\phi(x)|^{\tau}\quad \mbox{ for all } x \in K.
\]
\end{enumerate}
\end{fact}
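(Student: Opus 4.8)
The plan is to establish the five items separately; four of them reduce to the definition or to standard projection/stability facts, and only (P5) requires real substance. Items (P1) and (P4) are, as the statement notes, immediate. For (P1), if $f$ is a polynomial then $\gph f=\{(x,y)\in\RR^{n+1}\mid y-f(x)=0\}$ is the zero set of the single polynomial $(x,y)\mapsto y-f(x)$, hence semi-algebraic with $l=s=1$ in the defining display. For (P4), taking $l=1$ and $s=m$ one has $\{x\mid f_i(x)=\lambda,\ i=1,\ldots,m\}=\bigcap_{i=1}^{m}\{x\mid (f_i-\lambda)(x)=0\}$ and $\{x\mid f_i(x)\le\lambda,\ i=1,\ldots,m\}=\bigcap_{i=1}^{m}\{x\mid (f_i-\lambda)(x)\le0\}$, both of the required form because each $f_i-\lambda$ is again a polynomial.

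Items (P2) and (P3) I would not reprove: they are precisely \cite[Proposition~2.2.8]{real} and \cite[Proposition~2.2.6]{real}, and citing them suffices. If one wished to indicate why they hold, the point is the Tarski--Seidenberg theorem (projections of semi-algebraic sets are semi-algebraic): the distance $x\mapsto\di(x,D)=\inf_{d\in D}\|x-d\|$ is defined by a first-order formula in the ordered-field language (an existential quantifier over $d\in D$ together with a universal one encoding minimality), so its graph is semi-algebraic; and the graphs of $f+g$, $\lambda f$, $fg$, $\max\{f,g\}$ are each described by first-order formulas built from the (semi-algebraic) graphs of $f$ and $g$, so the same theorem applies. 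Since \cite{real} carries this out in full, a one-line citation is all I would include.

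Item (P5), the {\L}ojasiewicz inequality \cite[Corollary~2.6.7]{real}, is the one statement whose proof genuinely needs the machinery of real algebraic geometry, and it is the step I expect to be the real obstacle. I would only cite it, but the underlying argument runs by contradiction: if no pair $(c,\tau)$ works, one extracts a sequence in $K$ along which $|\psi|$ is too large relative to every fixed power of $|\phi|$; passing to a cluster point $\bar x$, which lies in $\phi^{-1}(0)$ and hence in $\psi^{-1}(0)$ by hypothesis, and invoking the curve selection lemma produces a semi-algebraic arc $\gamma$ through $\bar x$ on which $|\psi\circ\gamma|$ and $|\phi\circ\gamma|$ admit Puiseux expansions with positive leading exponents; comparing those exponents gives a local estimate $|\psi|\le c'|\phi|^{\tau'}$ with $\tau'\in(0,1]$ near $\bar x$, which contradicts the choice of the sequence. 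Compactness of $K$ is what lets the finitely many local exponents produced by a cell decomposition be merged into a single global $\tau$, and reproducing that cell-decomposition and curve-selection apparatus is well outside the scope of the present paper.
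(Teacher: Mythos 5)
Your proposal matches the paper's treatment exactly: (P1) and (P4) are verified directly from the definition of a semi-algebraic set, while (P2), (P3), and (P5) are handled by the same citations to \cite{real} that the paper uses. The extra sketch of the Tarski--Seidenberg and curve-selection machinery behind the cited results is accurate but not part of what the paper supplies, which is appropriate since the paper treats these as black-box facts.
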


\begin{remark} \label{remark:2} As pointed out by \cite{Luo}, the corresponding exponent  $\tau$ in {\L}ojasiewicz's
inequality (P5) is hard to determine and is typically not known. \qede
\end{remark}

\begin{remark} \label{re:semialgebraic}
Let $g_i$, $i=1,\ldots,m$ be  polynomials on $\RR^n$ and set  $S:=\{x\in\RR^n\mid g_i(x) \le 0\}$. Let $\bar x \in S$. Then, (P2) and (P4) imply that
$\phi=\max_{1 \le i \le m}[g_i]_+$ while $[g_i]_+:=\max\{g_i(\cdot),0\}$  and $\psi=\di(\cdot,S)$ are semi-algebraic functions. Applying (P5) it follows
 that there exist $c,\varepsilon
>0$ and $\tau \in (0,1]$ such that
\begin{equation}\label{eq:0088}
\di(x,S) \le c \max_{1 \le i \le m}[g_i(x)]_+^{\tau}\quad \mbox{ for all
} x \in \mathbb{B}(\bar x, \varepsilon).
\end{equation}\qede
\end{remark}

As  explained in Remark \ref{remark:2}, the exponent $\tau$ in \eqref{eq:0088} is hard to determine and is typically unknown. However, there are some special cases where
we can provide some effective estimates on the exponent $\tau$:   To formulate these results, we  introduce the following notation.
Define \begin{equation}\label{eq:kappa}
\kappa(n,d):=(d-1)^n+1.
\end{equation}

We now present various results which show that the exponent $\tau$ in \eqref{eq:0088} can be effectively estimated  when $g_i$ has some appropriate extra
structure.

\begin{fact} [Gwo\'{z}dziewicz]\label{lemma:3.2}\emph{(See \cite[Theorem 3]{effective}.)}
  Let  $g$ be a polynomial on $\mathbb{R}^n$ with degree no larger than $d$.
 Suppose that $g(0)=0$ and there exists $\varepsilon_0>0$ such that $g(x)>0$ for all $x \in \mathbb{B}(0,\varepsilon_0)\backslash \{0\}$.  Then there exist constants $c,\varepsilon>0$ such
that
\begin{equation} \label{eq:loja}
\|x\| \le  c \, g(x)^{\frac{1}{\kappa(n,d)}},\quad \forall x \in
\mathbb{B}(0,\varepsilon).
\end{equation}
\end{fact}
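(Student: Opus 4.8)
The goal is to establish the pointwise lower bound $g(x) \ge c' \|x\|^{\kappa(n,d)}$ near the origin, equivalently the stated inequality $\|x\| \le c\, g(x)^{1/\kappa(n,d)}$, for a nonnegative polynomial of degree at most $d$ that vanishes only at $0$ on a neighborhood. Since this is quoted as a known theorem (from \cite{effective}), the natural approach is to recall the idea of its proof rather than to reprove it from scratch; I will sketch how one would prove it.

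First I would reduce to a one-dimensional statement by slicing. For each unit vector $u$, consider the restriction $t \mapsto g(tu)$, a one-variable polynomial of degree $\le d$ that is positive for small $t \ne 0$ and vanishes at $t=0$; hence its lowest-order term is $a(u) t^{m(u)}$ with $a(u) > 0$ and $m(u) \le d$ even. A direct one-variable estimate gives $g(tu) \ge \tfrac12 a(u) t^{m(u)}$ for $|t|$ small, uniformly once one controls $a(u)$ from below and $m(u)$ from above. The key difficulty is that $m(u)$ and $a(u)$ vary with $u$, and $a(u)$ may approach $0$ along sequences of directions; this lack of uniformity is exactly what prevents the naive exponent $d$ from working and forces the larger exponent $\kappa(n,d) = (d-1)^n + 1$.

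The heart of the argument is a curve-selection / induction-on-dimension scheme (this is where the $(d-1)^n$ factor is generated). One shows that the worst rate of vanishing of $g$ along the "bad" directions can itself be estimated by applying the same type of bound to a lower-dimensional semi-algebraic set — morally, the set of directions where $a(u)$ is small — and that each dimension drop multiplies the exponent by at most a factor related to $d-1$ (the degree of the relevant discriminant-type polynomial after eliminating one variable). Quantitatively, one can use the Łojasiewicz inequality in the form (P5) together with effective Bézout-type degree bounds on the semi-algebraic sets arising from successive projections; tracking the degrees through $n$ elimination steps yields the product $(d-1)^n$, and the "$+1$" accounts for the final one-dimensional positivity estimate. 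Alternatively, Gwoździewicz's original proof uses a clever scaling/blow-up argument: rescale $x \mapsto \lambda x$ and analyze how $g$ behaves on spheres of radius $\lambda$, reducing the global estimate on $\mathbb{B}(0,\varepsilon)$ to a compactness argument on the unit sphere combined with the one-dimensional estimate above.

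The main obstacle, and the reason the constant $\kappa(n,d)$ is delicate, is making the dependence on the direction $u$ effective: one needs a \emph{quantitative} version of "the coefficient $a(u)$ cannot vanish to too high an order as $u$ ranges over the sphere," and this requires controlling the degrees of all auxiliary polynomials produced by quantifier elimination. Since the statement is cited verbatim from \cite[Theorem~3]{effective}, in the paper itself I would simply invoke that reference; the sketch above records why the exponent has the stated form and which step (the uniformization over directions via dimension induction) carries the real weight.
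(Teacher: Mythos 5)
The paper states this result as a \emph{Fact} with no proof, merely citing \cite[Theorem~3]{effective}; you correctly recognize that and default to the same citation, so your bottom-line approach matches the paper's. Your accompanying sketch of how the exponent $\kappa(n,d)=(d-1)^n+1$ arises is, however, not an accurate account of Gwo\'{z}dziewicz's actual argument. In \cite{effective}, the factor $(d-1)^n$ comes from a B\'ezout-type bound on the {\L}ojasiewicz exponent of the gradient map $\nabla g$ (an $n$-tuple of polynomials of degree at most $d-1$ with an isolated common zero at the origin), and the final $+1$ comes from the classical inequality relating the {\L}ojasiewicz exponent of a function at an isolated zero to the {\L}ojasiewicz exponent of its gradient, rather than from ``tracking degrees through $n$ elimination steps'' or a ``final one-dimensional positivity estimate.'' Since the statement is quoted verbatim and invoked as a black box, this discrepancy is harmless for the paper's purposes, but you should not present the quantifier-elimination heuristic as the content of \cite{effective}.
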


 We  denote by $\beta(s)$  the \emph{central binomial coefficient} with respect to an integer $s$: ${s \choose {[s/2] }}$ (with $\binom {0} {0}=1$)\cite{Kollar}.
\begin{fact} [Koll\'{a}r]\label{KolThe:1}\emph{(See \cite[Theorem~3(i)]{Kollar}.)}
 Let $g_i$  be polynomials on $\mathbb{R}^n$ with degree  $\leq d$ for every $i=1,\cdots,m$.  Let $g(x):=\max_{1\leq i\leq m} g_i(x)$.
 Suppose that there exists $\varepsilon_0>0$ such that $g(x)>0$ for all $x \in \mathbb{B}(0,\varepsilon_0)\backslash \{0\}$.  Then there exist constants $c,\varepsilon>0$ such
that
\begin{equation*}
\|x\| \le  c \, g(x)^{\frac{1}{\beta(n-1)d^n}},\quad \forall x \in
\mathbb{B}(0,\varepsilon).
\end{equation*}
\end{fact}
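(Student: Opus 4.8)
Since this statement is quoted from Koll\'ar, I indicate how one would establish such an \emph{effective} {\L}ojasiewicz inequality. We may assume $g(0)=0$: otherwise continuity together with the hypothesis $g>0$ on $\mathbb{B}(0,\varepsilon_{0})\backslash\{0\}$ forces $g(0)>0$, and then the asserted bound is trivial on a small ball. The plan is to reduce the conclusion to an estimate on the \emph{{\L}ojasiewicz exponent} $\ell$ of $g$ at the origin --- the smallest $\ell$ for which $g(x)\ge c\,\|x\|^{\ell}$ holds on some ball $\mathbb{B}(0,\varepsilon)$ with some $c>0$ --- and to show $\ell\le\beta(n-1)d^{n}$. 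By (P2) and (P3), $x\mapsto g(x)=\max_{1\le i\le m}g_{i}(x)$ and $x\mapsto\|x\|$ are semi-algebraic, so (P5) applied with $\phi=g$, $\psi=\|\cdot\|$ on a small closed ball already guarantees that such a finite positive rational $\ell$ exists; all the content is in the effective bound.

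First I would express $\ell$ through analytic arcs: by the curve selection lemma for semi-algebraic sets there is a semi-algebraic arc $\gamma\colon[0,\varepsilon)\to\RR^{n}$ with $\gamma(0)=0$ and $\gamma(t)\ne0$ for $t>0$ along which
\begin{equation*}
\ell \;=\; \frac{\operatorname{ord}_{t=0}\,g(\gamma(t))}{\operatorname{ord}_{t=0}\,\|\gamma(t)\|}.
\end{equation*}
The germ $\gamma$ is given by (vector) Puiseux series in $t$, so $\operatorname{ord}_{t=0}\|\gamma(t)\|=\min_{k}\operatorname{ord}_{t=0}\gamma_{k}(t)\ge1$. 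Since two distinct Puiseux branches cannot cross infinitely often as $t\to0^{+}$, for small $t>0$ a single index $i_{0}$ realises the maximum defining $g$; then $g(\gamma(t))=g_{i_{0}}(\gamma(t))$ there, with positive leading coefficient because $g(\gamma(t))>0$, and the task reduces to bounding $\operatorname{ord}_{t=0}g_{i_{0}}(\gamma(t))$ above by $\beta(n-1)d^{n}$.

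This upper bound is the heart of the proof, and it is the main obstacle: the order of vanishing of a degree-$d$ polynomial along a curve germ is not bounded by any elementary expression, because of cancellation among monomials, so a genuinely algebraic-geometric argument is needed. One would control it by first arranging that the extremal arc $\gamma$ lie on an honest algebraic curve of controlled degree --- the worst arc is of ``polar'' type, carried by the common zeros of finitely many polynomials (gradients and Jacobian minors) formed from the $g_{i}$, each of degree $O(d)$, so by B\'ezout in $\CC^{n}$ this curve has degree $\delta$ bounded by a product of such degrees --- and then, after complexifying, by the B\'ezout estimate that the intersection multiplicity at $0$ of the hypersurface $\{g_{i_{0}}=0\}$ with that curve is at most $d\,\delta$ (the curve cannot lie in the hypersurface, else $g_{i_{0}}\circ\gamma\equiv0$, contradicting positivity). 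Tracking these two counts yields the factor $d^{n}$; the remaining factor comes from a combinatorial optimisation in the $n-1$ variables that remain after restricting to the link of $\{g\le0\}$ at $0$, whose balanced extremal case produces precisely the central binomial coefficient $\beta(n-1)=\binom{n-1}{[(n-1)/2]}$ in place of the cruder $2^{\,n-1}$. Altogether $\ell\le\beta(n-1)d^{n}$.

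The uniformity over $x$ is then automatic, because $\ell$ is the genuine {\L}ojasiewicz exponent: from $\ell\le\beta(n-1)d^{n}$ one gets $g(x)\ge c\,\|x\|^{\ell}\ge c\,\|x\|^{\beta(n-1)d^{n}}$ on a small enough ball, and rewriting this as $\|x\|\le c'\,g(x)^{1/(\beta(n-1)d^{n})}$ is the assertion. The genuinely delicate point --- and the reason one cannot simply invoke the sharper single-polynomial exponent $\kappa(n,d)=(d-1)^{n}+1$ of Fact~\ref{lemma:3.2}, which would close the argument were $g$ one polynomial --- is to carry the B\'ezout and combinatorial estimates through with $\max_{1\le i\le m}g_{i}$ in place of a single polynomial; this is exactly Koll\'ar's theorem, and it is the source of the larger exponent $\beta(n-1)d^{n}$.
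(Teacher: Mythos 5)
The paper offers no proof of Fact~\ref{KolThe:1}: it is quoted verbatim as an external result --- Koll\'ar, \emph{Periodica Mathematica Hungarica} (1999), Theorem~3(i) --- and the ``Fact'' label signals exactly that, so there is no internal argument for your sketch to be measured against. Judged on its own, your outline has the right shape for an effective {\L}ojasiewicz estimate: normalise to $g(0)=0$, use curve selection to find a semi-algebraic arc $\gamma$ realising the {\L}ojasiewicz exponent, note that along $\gamma$ a single $g_{i_0}$ eventually dominates (Puiseux branches cross only finitely often near $0$), and then bound the order of vanishing of $g_{i_0}\circ\gamma$ against that of $\|\gamma\|$.

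However, the two steps that carry all the content are asserted rather than established. First, the claim that the extremal arc can be taken on an algebraic curve of B\'ezout-controlled degree, cut out by an auxiliary system built from the $g_i$: as written this is a heuristic (``the worst arc is of `polar' type''), not an argument, and in particular it does not explain how the presence of the maximum over $m$ polynomials --- as opposed to a single one, which is the Gwo\'{z}dziewicz case, Fact~\ref{lemma:3.2}, with the sharper exponent $\kappa(n,d)$ --- alters the auxiliary system or its degree count. Second, the combinatorial optimisation said to yield $\beta(n-1)$ rather than the crude $2^{n-1}$ is invoked but never set up: you identify neither the quantity being optimised nor the constraints, so the appearance of the central binomial coefficient is unexplained. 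You candidly flag this (``the heart of the proof''), which is fair, but it leaves the write-up a route map rather than a proof. Since the paper itself treats the statement as a citation, the appropriate posture is to do the same; any genuine reconstruction of Koll\'ar's argument would have to make these two steps precise, and at present neither is.
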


\begin{fact} \label{lemma:li0}\emph{(See \cite[Theorem~4.2]{Li0}.)}
 Let $g$ be a convex polynomial on $\mathbb{R}^n$ with degree at most $d$. Let $S:=\{x\mid g(x) \le 0\}$ and  $\bar x \in S$. Then, $g$ has a H\"{o}lder type local error bound with exponent $\kappa(n,d)^{-1}$, i.e.,
 there exist  constants $c,\varepsilon>0$ such
that
\[
\di(x,S) \le c \, [g(x)]_{+}^{\frac{1}{\kappa(n,d)}}\quad \mbox{ for all } x \in \mathbb{B}(\bar x, \varepsilon).
\]
\end{fact}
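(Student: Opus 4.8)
The plan is to peel off the easy cases and then reduce, via the recession structure of $g$, to a situation where Gwo\'{z}dziewicz's inequality (Fact~\ref{lemma:3.2}) applies directly. If $g(\bar x)<0$ then $\bar x\in\inte S$, so $\di(\cdot,S)$ vanishes near $\bar x$ and the bound holds trivially; hence assume $g(\bar x)=0$. Set $W:=\{v\in\RR^n\mid g^\infty(v)=0\}$. Using \eqref{eq:convex_recession_function} one checks that $W$ is a linear subspace, and Fact~\ref{Factinft:1} shows $g$ is constant along every direction in $W$; consequently $g(x)=\hat g(P_{W^\perp}x)$ for a convex polynomial $\hat g$ on $W^\perp$ of degree at most $d$ that has no nonzero direction of constancy. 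With $\hat S:=\{u\in W^\perp\mid\hat g(u)\le 0\}$ we have $S=\hat S+W$, so by orthogonality $\di(x,S)=\di(P_{W^\perp}x,\hat S)$ and $[g(x)]_+=[\hat g(P_{W^\perp}x)]_+$. Since $n':=\dim W^\perp\le n$ gives $\kappa(n',d)\le\kappa(n,d)$, and since $[g(x)]_+\le 1$ for $x$ near $\bar x$, an error bound for $\hat g$ at $P_{W^\perp}\bar x$ with exponent $\kappa(n',d)^{-1}$ implies the desired one for $g$ at $\bar x$. Hence we may assume outright that $g$ has no nonzero direction of constancy; then $g$ is coercive and $\ag g\neq\varnothing$.

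Suppose first $\nabla g(\bar x)\neq 0$. Then $\min g<0$ (otherwise $\bar x$ would minimise $g$ and $\nabla g(\bar x)=0$), so $S$ has nonempty interior and near $\bar x$ the hypersurface $\{g=0\}$ smoothly bounds $S$. For $x$ near $\bar x$ with $g(x)>0$, put $x':=P_S(x)$; then $g(x')=0$, the projection residual satisfies $x-x'\in N_S(x')=\RR_+\nabla g(x')$, and convexity gives $g(x)\ge\langle\nabla g(x'),x-x'\rangle=\|\nabla g(x')\|\,\di(x,S)$. As $\nabla g$ is continuous and nonzero at $\bar x$, it is bounded away from $0$ on a neighbourhood, yielding even a \emph{linear} local error bound $\di(x,S)\le c\,[g(x)]_+$, which in particular gives $\di(x,S)\le c\,[g(x)]_+^{1/\kappa(n,d)}$ once $[g(x)]_+\le 1$.

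Suppose now $\nabla g(\bar x)=0$. Then $\bar x$ minimises $g$, so $\min g=g(\bar x)=0$ and hence $S=g^{-1}(0)=\ag g$. Since $g$ is a convex polynomial with no direction of constancy, it cannot be constant on a segment: by Fact~\ref{lemma:3.1} it would then be constant along a whole line, that is, possess a constancy direction. As $\ag g$ is convex, this forces $\ag g=\{\bar x\}$; thus $S=\{\bar x\}$, $\di(x,S)=\|x-\bar x\|$, and $g(\bar x+y)>0$ for all $y\neq 0$. Applying Fact~\ref{lemma:3.2} to the polynomial $y\mapsto g(\bar x+y)$ yields $c,\varepsilon>0$ with $\|x-\bar x\|\le c\,g(x)^{1/\kappa(n',d)}$ on $\mathbb{B}(\bar x,\varepsilon)$; since $g\ge 0$ here, this reads $\di(x,S)\le c\,[g(x)]_+^{1/\kappa(n',d)}\le c\,[g(x)]_+^{1/\kappa(n,d)}$, completing the proof.

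The only genuinely delicate step is the reduction in the first paragraph: one must check that restriction to $W^\perp$ preserves convexity, polynomiality and degree, that it really eliminates every constancy direction, and that both $\di(\cdot,S)$ and $[g]_+$ transfer verbatim under the orthogonal decomposition $\RR^n=W^\perp\oplus W$. The case $\nabla g(\bar x)\neq 0$ also quietly uses the classical identification $N_S(x')=\RR_+\nabla g(x')$ at a smooth boundary point of the convex body $S$; everything else is either immediate or a direct appeal to Facts~\ref{Factinft:1}, \ref{lemma:3.1} and \ref{lemma:3.2}.
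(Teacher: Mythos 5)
The paper does not prove Fact~\ref{lemma:li0}; it is quoted from \cite[Theorem~4.2]{Li0} and used as a black box, so there is no internal proof to compare against. Your reconstruction is correct, and it relies on exactly the devices the paper deploys when it proves the more general Theorem~\ref{Tprop:2.0}: factor out the subspace $W=\{v\mid g^{\infty}(v)=0\}$ of constancy directions (using Facts~\ref{Factinft:1} and~\ref{lemma:3.1} to see that $W$ is a subspace and that $g$ factors through $W^{\perp}$), use $\kappa(n',d)\le\kappa(n,d)$ together with $[g]_{+}\le 1$ locally to pass back up in dimension, and apply Gwo\'{z}dziewicz's inequality (Fact~\ref{lemma:3.2}) at the point where the reduced polynomial has an isolated zero; the regular case $\nabla g(\bar x)\neq 0$ is handled by an elementary linear error bound, which is the local version of Fact~\ref{lemma:li1}/Fact~\ref{lemma:Robinson}. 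One side remark in your write-up is false: a convex polynomial with no nonzero constancy direction need not be coercive (take $g(x)=x$ on $\RR$, for which $g^{\infty}(v)=v$ vanishes only at $v=0$ yet $g$ is unbounded below). Fortunately this claim is never used --- in the case $\nabla g(\bar x)=0$ the minimiser you need is $\bar x$ itself, by Fermat's rule and convexity, and in the case $\nabla g(\bar x)\neq 0$ coercivity plays no role --- so the argument as a whole stands.
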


\begin{fact} \label{lemma:li1}\emph{(See \cite[Theorem~4.1]{Li0}.)}
 Let $g$ be a convex polynomial on $\mathbb{R}^n$. Let $S:=\{x\mid g(x) \le 0\}$. Suppose that there exists $x_0\in \mathbb{R}^n$ such that $g(x_0)<0$. Then, $g$ has a Lipschitz type global error bound, i.e.,
 there exists  a constant $c>0$ such
that
\[
\di(x,S) \le c \, [g(x)]_{+} \quad \mbox{ for all } x \in \mathbb{R}^n.
\]
\end{fact}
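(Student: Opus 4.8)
The plan is to isolate one elementary estimate and then handle the behaviour at infinity by a case split driven by the recession function $g^{\infty}$. Only $x\notin S$ matters, since there $\di(x,S)=0\le c\,[g(x)]_{+}$ trivially; and for $x\notin S$ we have $[g(x)]_{+}=g(x)>0$, so the goal is $\di(x,S)\le c\,g(x)$. The key tool is a \emph{Slater--segment estimate}: with $\eta:=-g(x_0)>0$, the convex function $t\mapsto g\big((1-t)x+tx_0\big)$ on $[0,1]$ is positive at $t=0$ and negative at $t=1$, hence vanishes at some $t^{\ast}\in(0,1)$; convexity gives $0\le(1-t^{\ast})g(x)-t^{\ast}\eta$, so $t^{\ast}\le g(x)/\eta$, and therefore
\[
\di(x,S)\ \le\ t^{\ast}\,\|x-x_0\|\ \le\ \frac{\|x-x_0\|}{\eta}\,[g(x)]_{+}.
\]
This already yields the error bound on any bounded set, and the whole difficulty is to control $\|x-x_0\|$ (equivalently, the unboundedness of $S$) when $x$ runs to infinity.

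First I would dispose of the case in which there is some $v_0\ne0$ with $g^{\infty}(v_0)<0$. By \eqref{eq:convex_recession_function}, $g^{\infty}(v_0)=\sup_{t>0}\frac{g(x+tv_0)-g(x)}{t}$, so, writing $m:=-g^{\infty}(v_0)>0$, we get $g(x+tv_0)\le g(x)-mt$ for every $t>0$. Hence $x+\tfrac{[g(x)]_{+}}{m}v_0\in S$, and
\[
\di(x,S)\ \le\ \frac{\|v_0\|}{m}\,[g(x)]_{+}\qquad\text{for all }x\in\mathbb{R}^n,
\]
which proves the theorem in this case with no further work.

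In the remaining case $g^{\infty}(v)\ge0$ for all $v$, the plan is to reduce to a coercive situation and then produce a genuine linear minorant for $g$. Put $L:=\{v\mid g^{\infty}(v)=0\}$. By Fact~\ref{Factinft:1}, $g$ is constant along every direction in $L$; being constant along $v$ and along $w$ forces constancy along $v\pm w$, so $L$ is a linear subspace, and $S=(S\cap L^{\perp})\oplus L$ splits orthogonally, so that $\di(\cdot,S)$ and $[g(\cdot)]_{+}$ both factor through the orthogonal projection onto $L^{\perp}$. We may thus replace $g$ by its restriction $\tilde g$ to $L^{\perp}$, a convex polynomial with $\tilde g^{\infty}(v)=g^{\infty}(v)>0$ for every $v\in L^{\perp}\setminus\{0\}$ — hence with bounded sublevel sets — and still possessing the Slater point $z:=P_{L^{\perp}}x_0$. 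Now $\tilde S:=\{\tilde g\le0\}$ is compact; set $D:=\operatorname{diam}\tilde S<\infty$. For $x\notin\tilde S$ the segment $[z,x]$ meets $\tilde S$ at a point $w$ with $\|w-z\|\le D$, and monotonicity of secant slopes of $\tilde g$ along $[z,x]$ gives $\tilde g(x)\ge\tilde g(z)+\frac{-\tilde g(z)}{D}\,\|x-z\|$, i.e.\ $\|x\|\le\alpha\,[\tilde g(x)]_{+}+\beta$ for suitable constants $\alpha,\beta$. Combining this with $\di(x,\tilde S)\le\|x-z\|\le\|x\|+\|z\|$ settles all $x$ with $[\tilde g(x)]_{+}\ge1$, while $\{0<\tilde g<1\}$ is bounded and is covered by the Slater--segment estimate of the first paragraph; taking the maximum of the finitely many constants yields $\di(\cdot,\tilde S)\le c\,[\tilde g(\cdot)]_{+}$, which transfers back to $g$ through the projection onto $L^{\perp}$.

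I expect the delicate point to be exactly the reduction in the last paragraph: recognising the recession directions with $g^{\infty}=0$, proving they form a subspace along which $g$ is constant, and checking that $S$ together with both sides of the inequality decompose accordingly. This is precisely where the polynomial hypothesis is indispensable — it enters only through Fact~\ref{Factinft:1}, and for a general convex function the set $\{g^{\infty}=0\}$ need not be a subspace. By contrast, the linear minorant for the coercive restriction $\tilde g$ is a soft consequence of boundedness of its sublevel sets and convexity, and the two remaining situations (a strictly decreasing recession direction, or a bounded region handled by the Slater--segment estimate) are elementary.
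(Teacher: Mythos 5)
The paper does not prove this statement at all: it is quoted verbatim from \cite[Theorem~4.1]{Li0} and used as a black box (it enters the proof of Theorem~\ref{Tprop:2.0} in the ``Trivial Case'' when $J_0=\emptyset$). Your proposal supplies a self-contained proof from tools that are actually available in Section~\ref{s:aux}, and I believe it is correct.

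The logical structure is sound and worth recording. The Slater--segment estimate in your first paragraph is exactly the convexity argument underlying Robinson's Fact~\ref{lemma:Robinson}, and by itself it gives the bound only with a constant that blows up like $\|x-x_0\|$. The case split on $g^{\infty}$ is what turns a local estimate into a global one. The case $g^{\infty}(v_0)<0$ is handled cleanly via \eqref{eq:convex_recession_function}: from $g(x+tv_0)\le g(x)-mt$ one gets a feasible point at distance $\tfrac{\|v_0\|}{m}[g(x)]_{+}$. In the complementary case, the set $L=\{g^{\infty}=0\}$ is indeed a subspace: if $g^{\infty}(v)=0$ then Fact~\ref{Factinft:1} gives full translational invariance of $g$ along $v$, invariance along $v$ and $w$ yields invariance along $v+w$ and $\lambda v$, and invariance along a direction implies the recession function vanishes there — so $L$ is closed under sums and scalar multiples. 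Since $g(x)=g(P_{L^{\perp}}x)$ for all $x$, both $[g(\cdot)]_{+}$ and $S$ (hence $\di(\cdot,S)$) factor through $P_{L^{\perp}}$, which legitimizes restricting to $L^{\perp}$. On $L^{\perp}$ the recession function of $\tilde g$ is strictly positive off the origin, so every sublevel set of $\tilde g$ (in particular $\tilde S$ and $\{\tilde g\le1\}$) has trivial recession cone and is compact; the secant-slope inequality along $[z,x]$ then gives the global affine minorant $\tilde g(x)\ge\tilde g(z)+\frac{-\tilde g(z)}{D}\|x-z\|$, and the two regimes $[\tilde g(x)]_{+}\ge1$ and $0<\tilde g(x)<1$ are covered respectively by this minorant and the Slater--segment estimate on a bounded set. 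Finally one should note the degenerate case $L=\mathbb{R}^n$: then $g$ is constant and $S=\mathbb{R}^n$, so the bound is trivial; your argument silently subsumes this (diam$\,\tilde S=0$, $\tilde S=\{0\}$) but it is worth saying. This is an elementary proof and, in spirit, close to how one expects the cited theorem to be proved: Robinson's local estimate plus a polynomial-specific control of the recession structure at infinity.

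Two very small cosmetic points, neither a genuine gap: (i) ``the segment $[z,x]$ meets $\tilde S$'' should read ``exits $\tilde S$ at a boundary point $w$''; the inequality $\|w-z\|\le D$ is because $w$ and $z$ are both in $\tilde S$. (ii) In the first case you implicitly assume $[g(x)]_{+}>0$; when $[g(x)]_{+}=0$ the bound is trivial since $x\in S$.
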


\begin{fact}\label{lemma:Robinson}\emph{(See \cite[Theorem 2]{Robinson}}
 Let $g$ be a continuous convex function $\RR^n$.  Let $S:=\{x \in \mathbb{R}^n\ \mid g(x) \le 0\}$. Suppose that there exists $x_0\in \mathbb{R}^n$ 
  such that $g(x_0)<0$. Then, for every compact subset $K$  of $\RR^n$,
  there exists  $c>0$ such
 that
 \[
 \di(x,S) \le c \, [g(x)]_{+} \quad \mbox{ for all } x \in K.
 \]
\end{fact}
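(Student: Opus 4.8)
The plan is to use the Slater point $x_0$ together with the one-dimensional convexity of $g$ along line segments; this is the classical Robinson/Hoffman-type argument and needs neither semi-algebraicity nor differentiability. First observe that $g(x_0)<0\le 0$ gives $x_0\in S$, so $S\neq\varnothing$ and $\di(\cdot,S)$ is finite-valued. Fix a compact set $K\subseteq\RR^n$ and put $M:=\sup_{y\in K}\|y-x_0\|$, which is finite by compactness. For $x\in K$ with $g(x)\le 0$ we have $x\in S$, hence $\di(x,S)=0=[g(x)]_+$ and there is nothing to show; so from now on fix $x\in K$ with $g(x)>0$.

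For such an $x$ I would look at the segment $[x_0,x]$ and the scalar function $\varphi(t):=g\big((1-t)\,x+t\,x_0\big)$, which is convex and continuous on $[0,1]$ with $\varphi(0)=g(x)>0$ and $\varphi(1)=g(x_0)<0$. By the intermediate value theorem there is $t=t(x)\in(0,1)$ with $g(z)=0$ for $z:=(1-t)\,x+t\,x_0$; in particular $z\in S$. The key estimate then comes from convexity of $g$: $0=g(z)\le(1-t)\,g(x)+t\,g(x_0)$, which rearranges to $t\,|g(x_0)|\le(1-t)\,g(x)\le g(x)$, i.e.\ $t\le g(x)/|g(x_0)|$.

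Since $x-z=t\,(x-x_0)$, this yields $\di(x,S)\le\|x-z\|=t\,\|x-x_0\|\le\big(\|x-x_0\|/|g(x_0)|\big)\,g(x)\le\big(M/|g(x_0)|\big)\,[g(x)]_+$, valid for every $x\in K$ with $g(x)>0$ and hence, combined with the trivial case, for all $x\in K$. Thus $c:=M/|g(x_0)|$ works. I do not expect a genuine obstacle in this argument; the only point that needs a moment's care is that the bound on the crossing parameter $t$ be uniform over $x\in K$, and this is automatic because the segment estimate produces the factor $\|x-x_0\|$, which is bounded on $K$ precisely by compactness.
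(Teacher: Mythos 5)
Your proof is correct. The paper states this result as a Fact cited from Robinson's Theorem~2 and does not supply its own proof, so there is no internal argument to compare against; your argument is the standard Slater-point / line-segment proof that underlies Robinson's theorem (pick the crossing point $z$ on $[x,x_0]$ where $g$ vanishes, bound the crossing parameter $t$ by convexity of $g$ along the segment, then bound $\|x-z\|=t\|x-x_0\|$ using compactness of $K$), and it is complete and self-contained.
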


The following example  show us that the conclusion of Fact \ref{lemma:Robinson} can fail if we allow $K$ to be noncompact.

\begin{example}[Shironin]\emph{\rm( See \cite[Example~4.1]{Li0} or \cite{Shironin}.)}
Let $g_1, g_2: \RR^4\rightarrow\RR$ be defined by
\begin{align*}
g_1(x_1, x_2,x_3, x_4)&:=x_1,\\
g_2(x_1, x_2,x_3, x_4)&:=x_1^{16}
+ x_2^{8}
+ x_3^{6}
+ x_1 x^3_2x_3^3+x_1^2x_2^4x_3^2+ x_2^2
x_3^4+x_1^4x_3^4\\
&\quad+x_1^4x_2^6+x_1^2x_2^6+x_1^2+x_2^2+x_3^2-x_4.
\end{align*}
Then $g_1$,$g_2$ are convex polynomials and
\begin{align*}
g_1(-k,0,0,k^{16}+k^2+k) = g_2(-k,0,0,k^{16}+k^2+k)=-k<0,\quad\forall k\in\NN.
\end{align*}
Let $g(x):=\max\{g_1(x),g_2(x)\}$ for all $x \in \mathbb{R}^4$. Then, $g$ is a continuous convex function and $g(-k,0,0,k^{16}+k^2+k)<0$ for all $k \in \NN$.
On the other hand, as shown in \cite{Shironin}, there exists a sequence $(x_k)_{k\in\NN}$ in $\RR^4$ such that
\begin{align*}
[g(x_k)]_+ \leq 1,\,\forall k\quad\text{but}\quad \di(x_k, S)\longrightarrow +\infty,
\end{align*}
where $S:=\{x\in\RR^4\mid g_1(x)\leq0, g_2(x)\leq 0\}$. Let $K$ be a noncompact set such that $\{x_k\} \subseteq K$. Then, the conclusion of Fact \ref{lemma:Robinson} 
``$\di(x,S) \le c \, [g(x)]_{+} \quad \mbox{ for all } x \in K$'' must fail in this case.
\qede
\end{example}

Recall that a set $C \subseteq \mathbb{R}^n$ is  a \emph{basic semi-algebraic convex} set if there exist $\gamma \in\NN$ and  convex polynomial functions, $g_{j}, j=1,\ldots,\gamma$ such that $C=\{x  \in \mathbb{R}^n\mid g_{j}(x) \le 0, j=1,\cdots,\gamma\}.$ Clearly, any basic semi-algebraic convex set is convex and semi-algebraic. However, the following example shows that a convex and semi-algebraic set need not to be a basic semi-algebraic convex set.

\begin{example} \label{remark:1}
Consider the set $A:=\{(x_1,x_2)\in\RR^2\mid 1-x_1x_2 \le 0, -x_1 \le 0, -x_2 \le 0\}$. Clearly, $A$ is convex and semi-algebraic while the polynomial $(x_1,x_2)\mapsto 1-x_1x_2$ is not convex. We now show that $A$ is not a basic semi-algebraic convex set, i.e., it cannot be written as $\{x: g_i(x) \le 0, i=1,\cdots,l\}$ for some convex polynomials $g_i$, $i=1,\cdots,l$, $l \in \mathbb{N}$. To see this,
we proceed by the method of contradiction. Let $f:\mathbb{R}^2 \rightarrow \mathbb{R}$ be defined by $f(x_1,x_2):=x_1$. Clearly $\displaystyle \inf_{x=(x_1,x_2)\in A}f(x) =0$. Then,  then by Fact~\ref{lemma:2.4}, $f$ should attain its minimum on $A$. This leads to a  contradiction, and so,  justifies the claim. \qede
\end{example}

\subsection{Notation and facts for projection methods}

From now on, we assume that
\begin{equation*}
   \addtolength{\fboxsep}{5pt}
   \boxed{
   \begin{alignedat}{5}
   &m\in\NN, \gamma_i\in\NN,\, i=1,\ldots,m\\
     & g_{i,1}, g_{i,2},\cdots, g_{i, \gamma_i}\,\text{ are convex polynomials  on}\,\RR^n, \, i=1,\ldots,m\\
&
C_i:=\Big\{x\in\RR^n\mid g_{i,1}(x)\leq0, g_{i,2}(x)\leq0,\cdots, g_{i,\gamma_i}(x)\leq 0  \Big\},\, i=1,\ldots,m\\
&P_i:=P_{C_i},\quad \forall i=1,2,\cdots,m\\
&C:=\bigcap_{i=1}^m C_i\neq\varnothing.
\end{alignedat}
   }
\end{equation*}

\begin{fact}[Bregman]\emph{(See \cite{Bregman}.)}\label{BregFms:1}
Let $x_0\in\RR^n$. The sequence of cyclic projections, $(x_k)_{k\in\NN}$, defined by
\begin{align}
x_1:=P_1 x_0, x_2:=P_2 x_1,\cdots, x_m:=P_mx_{m-1}, x_{m+1}:=P_1x_{m}\ldots
\end{align}   converges to a point in $C$.
\end{fact}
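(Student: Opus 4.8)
The statement is the classical convergence theorem for the method of successive cyclic projections onto convex sets, and the plan is to run the standard three-step argument: Fej\'er monotonicity, asymptotic regularity, and identification of the limit. Throughout I use that each $P_i=P_{C_i}$ is single-valued and firmly nonexpansive on $\RR^n$, so that from the obtuse-angle characterisation $\langle x-P_ix,\,c-P_ix\rangle\le 0$ for $c\in C_i$ one obtains the basic inequality
\begin{equation}\label{eq:fne}
\|P_i x - c\|^2 + \|x - P_i x\|^2 \le \|x - c\|^2 \qquad \text{for all } x\in\RR^n,\ c\in C_i.
\end{equation}

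First I would fix an arbitrary $c\in C$ and apply \eqref{eq:fne} at each step of the iteration \eqref{Cycse:1}; since $c$ lies in every $C_i$, this gives $\|x_{k+1}-c\|^2\le\|x_k-c\|^2-\|x_{k+1}-x_k\|^2$ for all $k$. Hence $(\|x_k-c\|)_{k\in\NN}$ is non-increasing, so it converges and $(x_k)_{k\in\NN}$ is bounded (this is Fej\'er monotonicity with respect to $C$). Telescoping the same inequality yields $\sum_{k}\|x_{k+1}-x_k\|^2\le\|x_0-c\|^2<\infty$, so $\|x_{k+1}-x_k\|\to 0$, and therefore $\|x_{k+\ell}-x_k\|\to 0$ for each fixed $\ell\in\NN$ (asymptotic regularity of the cycle).

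Next I would extract a cluster point. By boundedness some subsequence $x_{k_j}$ converges to a point $\bar x\in\RR^n$; passing to a further subsequence I may assume that all the indices $k_j$ are congruent to a fixed $r$ modulo $m$, so that the next $m$ iterates after $x_{k_j}$ are produced by the projectors $P_{r+1},P_{r+2},\dots,P_{r+m}$ in order (indices read cyclically). For each $\ell\in\{1,\dots,m\}$ asymptotic regularity gives $x_{k_j+\ell}\to\bar x$, while $x_{k_j+\ell}=P_{r+\ell}\,x_{k_j+\ell-1}$; by continuity of $P_{r+\ell}$ and induction on $\ell$ we get $\bar x=P_{r+\ell}\bar x$, i.e. $\bar x\in C_{r+\ell}$. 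As $\ell$ runs over one full cycle this shows $\bar x\in\bigcap_{i=1}^m C_i=C$.

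Finally, since $\bar x\in C$, the Fej\'er monotonicity of step one applies with $c=\bar x$: the sequence $(\|x_k-\bar x\|)_{k\in\NN}$ is non-increasing and has the null subsequence $(\|x_{k_j}-\bar x\|)$, hence converges to $0$; thus $x_k\to\bar x\in C$. The one genuinely delicate point is the cluster-point identification: because the $m$ projectors are applied in rotation, one must first restrict to a subsequence lying in a single residue class modulo $m$ and then propagate the limit through an entire cycle using asymptotic regularity, so that $\bar x$ is seen to lie in every $C_i$ and not merely in one of them.
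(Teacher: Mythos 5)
The paper states this result as a cited fact from Bregman's 1965 paper and gives no proof of its own, so there is no internal argument to compare against. Your proof is the standard one and it is correct: firm nonexpansiveness of each $P_i$ gives Fej\'er monotonicity with respect to $C$ together with summability of $\|x_{k+1}-x_k\|^2$; this yields asymptotic regularity, hence $\|x_{k+\ell}-x_k\|\to 0$ for each fixed $\ell$; boundedness furnishes a cluster point $\bar x$, and by restricting to a subsequence lying in a fixed residue class modulo $m$ and propagating the limit through one full cycle via continuity of the projectors, you correctly place $\bar x$ in every $C_i$ and hence in $C$; finally, Fej\'er monotonicity toward $\bar x\in C$ upgrades subsequential convergence to convergence of the whole sequence. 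You rightly identify the residue-class restriction as the one delicate point: without it one would only learn that $\bar x$ is fixed by some $P_i$, not by all of them.
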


\begin{fact}[Bauschke and Borwein]\emph{(See \cite[Lemma~2.2 and Theorem~4.8]{BBJAT1}, or \cite[Fact~1.1(iii) and Fact~1.2(ii)]{Heinz_Set}.)}\label{FactPr:1}
Let $A, B$ be nonempty convex subsets of $\RR^n$ such that $A- B$ is closed. Let $b_0\in X$ and $(a_k)_{k\in\NN}, (b_k)_{k\in\NN}$ be defined as below:
\begin{align*}
\begin{cases}
a_{k+1}&:=  P_Ab_k \\
b_{k+1}&:=  P_Ba_{k+1}.
\end{cases}
\end{align*}
Let $v:=P_{A-B}0$. Then, we have \begin{enumerate}
 \item[\rm (i)] \label{FactPr:a}$\|v\|=\di(A,B)$ and
$a_k\longrightarrow a,\quad b_k\longrightarrow a+v$.

\item[\rm (ii)]\label{FactPr:b}  $P_Bx=P_{B\cap(A+v)}x=x+v,\,\forall x\in A\cap (B-v)$\quad and \quad
$P_Ay=P_{A\cap (B-v)}y=y-v,\,\forall y\in B\cap(A+v)$
\end{enumerate}
\end{fact}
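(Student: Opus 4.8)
The plan is to treat this as the classical analysis of alternating projections between two closed convex sets (as in \cite{BBJAT1}), organised around the \emph{minimal displacement vector} $v$. I will use only two standard facts about the metric projection onto a nonempty closed convex set $C\subseteq\RR^n$: the variational characterisation ``$P_Cx$ is the unique $p\in C$ with $\langle x-p,\,c-p\rangle\le 0$ for all $c\in C$'', and firm nonexpansiveness, $\|P_Cx-P_Cy\|^2+\|(x-P_Cx)-(y-P_Cy)\|^2\le\|x-y\|^2$. (Here $A,B$ are taken closed so that the $P_A,P_B$ in the scheme are well defined; this holds in the application, where each $C_i$ is closed.)

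First I would note that $\di(A,B)=\di(0,A-B)$, that $A-B$ is convex and --- by hypothesis --- closed, so $v:=P_{A-B}0$ is well defined with $\|v\|=\di(0,A-B)=\di(A,B)$; I orient the difference set so that $v=b^\ast-a^\ast$ for the nearest-point pair below. Because $A-B$ is closed, $v$ actually \emph{lies in} $A-B$, so there exist $a^\ast\in A$, $b^\ast\in B$ with $b^\ast-a^\ast=v$ and $\|a^\ast-b^\ast\|=\di(A,B)$. The variational characterisation of $v$ as the nearest point of (the translate of) the difference set to $0$ amounts to the single inequality $\langle v,\,b-a\rangle\ge\|v\|^2$ for all $a\in A$, $b\in B$, and \emph{this inequality is exactly what proves part (ii)}: given $x\in A\cap(B-v)$ we have $x+v\in B$, and for every $b\in B$ the inequality (with $a=x$) says $\langle x-(x+v),\,b-(x+v)\rangle\le 0$, which is precisely the optimality condition certifying $P_Bx=x+v$; since moreover $x+v\in B\cap(A+v)\subseteq B$, the same point is the projection of $x$ onto the smaller set, giving $P_{B\cap(A+v)}x=x+v$. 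The dual identity $P_Ay=P_{A\cap(B-v)}y=y-v$ for $y\in B\cap(A+v)$ is symmetric. Specialising $x=a^\ast$ yields $a^\ast=P_Ab^\ast$ and $b^\ast=P_Ba^\ast$, so a nearest-point pair is a fixed point of one $A$-then-$B$ sweep.

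For part (i) I would fix any nearest-point pair $(a^\ast,b^\ast)$ and use that each step of the scheme is a projection. Nonexpansiveness gives the interlaced chain $\|b_0-b^\ast\|\ge\|a_1-a^\ast\|\ge\|b_1-b^\ast\|\ge\|a_2-a^\ast\|\ge\cdots$, so $(\|a_k-a^\ast\|)$ and $(\|b_k-b^\ast\|)$ decrease to a common limit $\ell$ and the iterates are bounded. Substituting into firm nonexpansiveness of $P_A$ at $(b_k,b^\ast)$ and of $P_B$ at $(a_{k+1},a^\ast)$ squeezes the displacement terms: $\|(b_k-a_{k+1})-v\|^2\le\|b_k-b^\ast\|^2-\|a_{k+1}-a^\ast\|^2\to 0$ and $\|(b_{k+1}-a_{k+1})-v\|^2\le\|a_{k+1}-a^\ast\|^2-\|b_{k+1}-b^\ast\|^2\to 0$, whence $b_k-a_{k+1}\to v$ and $b_{k+1}-a_{k+1}\to v$; combining these (and shifting indices) forces $a_{k+1}-a_k\to 0$, $b_{k+1}-b_k\to 0$, and $b_k-a_k\to v$. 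Now pick any cluster point $a$ of the bounded sequence $(a_k)$, say $a_{k_j}\to a$; then $b_{k_j}\to a+v$, and closedness of $A$ and $B$ gives $a\in A$, $a+v\in B$, i.e.\ $a\in A\cap(B-v)$ --- so $(a,a+v)$ is itself a nearest-point pair. Re-running the monotonicity argument with this pair, $(\|a_k-a\|)$ is non-increasing and has a subsequence tending to $0$, hence $\|a_k-a\|\to 0$; therefore $a_k\to a$ and $b_k=a_k+(b_k-a_k)\to a+v$, which is the claim.

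I expect the only real obstacles to be the two points where closedness of $A-B$ is genuinely used. The first is producing an honest nearest-point \emph{pair} $(a^\ast,b^\ast)\in A\times B$, rather than merely an infimising sequence --- without $A-B$ closed the iterates can fail to converge to points of $A$ and $B$ at all. The second is the ``cluster point $\Rightarrow$ limit'' upgrade: the subtlety is that one must first \emph{manufacture} a cluster point that happens to be a nearest-point pair (using the squeezed displacements together with closedness of $A$ and $B$) before Fej\'er monotonicity can be invoked at that particular pair. Everything else --- the chain of inequalities and the firm-nonexpansiveness estimates --- is routine bookkeeping.
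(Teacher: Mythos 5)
The paper states this result as a Fact and cites \cite{BBJAT1} and \cite{Heinz_Set} for it; there is no in-paper proof to compare against. Your argument is correct and is, in substance, the standard one from those references: characterise $v$ variationally as the minimal displacement, read off part~(ii) directly from that inequality (and from ``nearest point in a larger set belongs to the smaller set''), use (ii) at a nearest-point pair to certify it is a fixed point of one sweep, then run the Fej\'er-monotonicity plus firm-nonexpansiveness squeeze, extract a cluster point that is itself a nearest-point pair, and upgrade cluster to limit. Two side remarks you already half-noticed and are worth making explicit. First, $A$ and $B$ must themselves be closed for $P_A,P_B$ to be single-valued; the hypothesis as printed only asks that $A-B$ be closed, which is an omission carried over from the statement (it holds in the paper's application, where each $C_i$ is a closed basic semi-algebraic convex set). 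Second, your phrase ``orient the difference set so that $v=b^\ast-a^\ast$'' is silently correcting a sign slip in the statement: with $v:=P_{A-B}0$ one has $v\in A-B$, yet the conclusions $a\in A$ and $a+v\in B$ force $v\in B-A$; the intended definition is $v:=P_{B-A}0$, which is exactly the convention the paper uses later in its Theorem~4.7. With that sign fixed, your proof goes through without change.
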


\begin{definition}
Let $A$ be a nonempty convex subset of $\RR^n$. We say the sequence $(x_k)_{k\in\NN}$ in
$\RR^n$ is \emph{Fej\'{e}r
monotone} with respect to $A$ if
\begin{align*}
\|x_{k+1}-a\|\leq\|x_k-a\|,\quad\forall k\in\NN,\, a\in A.
\end{align*}

\end{definition}

\begin{fact}[Bauschke and Borwein]\emph{(See \cite[Theorem~3.3(iv)]{Heinz_Set}.)}\label{FactPr:2}
Let $A$ be a nonempty closed  convex subset of $\RR^n$ and let $(x_k)_{k\in\NN}$ be Fej\'{e}r
monotone with respect to $A$, and $x_k\longrightarrow x\in A$. Then $\|x_n-x\|\leq2\di(x_n,A)$.
\end{fact}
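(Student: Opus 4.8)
I would prove Fact~\ref{FactPr:2} by combining a single use of the Fej\'er monotonicity hypothesis with two applications of the triangle inequality; nothing deeper is needed. Fix the index $n$. Since $A$ is a nonempty closed convex subset of $\RR^n$, there exists $a\in A$ with $\|x_n-a\|=\di(x_n,A)$ (a nearest point of $A$ to $x_n$). The task then reduces to showing that the limit point $x$ satisfies $\|x-a\|\le\di(x_n,A)$: once this is in hand, the conclusion follows immediately.

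To obtain this, I would apply the Fej\'er monotonicity inequality \emph{at the particular point} $a\in A$. By definition, $\|x_{k+1}-a\|\le\|x_k-a\|$ for every $k\in\NN$, so the real sequence $\big(\|x_k-a\|\big)_{k\in\NN}$ is non-increasing; in particular $\|x_k-a\|\le\|x_n-a\|$ for all $k\ge n$. Letting $k\to\infty$ and using $x_k\to x$ together with continuity of the norm gives $\|x-a\|\le\|x_n-a\|=\di(x_n,A)$.

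Finally, since $a\in A$, the triangle inequality yields
\[
\|x_n-x\|\le\|x_n-a\|+\|a-x\|\le\di(x_n,A)+\di(x_n,A)=2\,\di(x_n,A),
\]
as claimed. There is essentially no obstacle in this argument; the only thing requiring care is the order of the quantifiers — one must first freeze $n$, then take $a$ to be a \emph{nearest} point of $A$ to $x_n$, and only afterwards exploit the monotonicity of $k\mapsto\|x_k-a\|$. (Incidentally, the hypothesis $x\in A$ is not actually needed for this particular estimate; only $x_k\to x$ and $A\neq\varnothing$ are used. It is recorded because it holds in the intended application to the sequence of cyclic projections.)
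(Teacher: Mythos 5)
Your proof is correct and is exactly the standard argument (the one underlying \cite[Theorem~3.3(iv)]{Heinz_Set}); the paper itself states this as a cited fact without reproducing the proof. Your side remark that the hypothesis $x\in A$ is not used in this estimate is also accurate: only the existence of a nearest point $a\in A$ to $x_n$, the monotone decrease of $k\mapsto\|x_k-a\|$, and the convergence $x_k\to x$ are invoked.
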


\setcounter{equation}{0}
\section{H\"{o}lderian  regularity for basic semi-algebraic convex sets}\label{s:supphol}

In this section,  we will establish H\"{o}lderian  regularity for basic semi-algebraic convex sets and shall provide an effective estimate of the exponent in the regularity results.
This result plays an important role in our following estimation of the convergence speed of the cyclic projection methods.

To do this, we first establish an error bound result which estimates the distance of a point to a basic semi-algebraic convex set $S$ in terms of
the polynomials which define $S$. More explicitly, we obtain an  explicit exponent $\tau>0$ such that there exist $c,\varepsilon>0$,
\[
\di(x,S)\le c\bigg(\max_{1 \le i \le m} [g_i(x)]_+\bigg)^{\tau}\quad  { whenever } \quad \|x-\overline{x}\|\le\epsilon,
\]
where $S:=\{x \in \mathbb{R}^n\mid g_i(x) \le 0, i=1,\cdots,m\}$. 

We note that
this error bound property plays an important role in convergence analysis of many  algorithms for optimization problems \cite{Pang_MP} as well
as  for the variational inequality problem \cite{Li1}, and that the exponent $\tau$ in the error bound property has a close relationship with
 the convergence rate of the algorithm. However,  existing results such as the powerful {\L}ojasiewicz's inequality do not provide any insight on how to explicitly estimate the exponent $\tau$.

Before we proceed, let us use a simple example to illustrate that the exponent $\tau$ can be related to the maximum degree of the polynomials defined the basic semi-algebraic convex set and the dimension of the underlying space. This example is partially inspired by \cite[Example~1]{Kollar}.

\begin{example}\label{ex:01}
Let $d$ be an even number. Consider convex polynomials $g_i$, $i=1,\ldots,n$ on $\mathbb{R}^n$ given by  $g_1(x):=x_1^d$ and $g_i$, $i=2,\cdots,n$ given by
$g_i(x):=x_{i}^d-x_{i-1}$, $i=2,\cdots,n$. Then, direct verification gives us that $S:=\{x \in \mathbb{R}^n\mid g_i(x) \le 0, i=1,\cdots,n\}=\{0\}$, and so,
$\di(x,S)=\|x\|$. In this case, consider $x(t)=(t^{d^{n-1}},t^{d^{n-2}},\cdots,t) \in \mathbb{R}^n$, $t \in (0,1)$. Then $\di(x(t),S)=O(t)$ and $\max_{1 \le i \le m}[g_i(x(t))]_+=t^{d^n}$. Therefore, we
see that if there exist $c,\epsilon>0$ and $\tau>0$ such that \[
\di(x,S)\le c\bigg(\max_{1 \le i \le m} [g_i(x)]_+\bigg)^{\tau}\quad  { whenever } \quad \|x\|\le\epsilon,
\]
then, $\tau \le \frac{1}{d^{n}}$. Thus, we see that the exponent $\tau$ is related to the maximum degree  of the polynomials defined the basic semi-algebraic convex set and the dimension of the underlying space. \qede
\end{example}

We now introduce a decomposition of the index set.
\begin{definition}
For convex polynomials $g_1,\ldots,g_m$ on $\mathbb{R}^n$ with $S=\{x\in\RR^n\mid g_i(x) \le 0, i=1,\ldots,m\}$, the index set $\{1,\ldots,m\}$ can be decomposed as $J_0 \cup J_1$ with $J_0 \cap J_1=\emptyset$ where
\begin{equation}\label{eq:J1}
J_0:=\big\{i \in \{1,\ldots,m\}\mid g_i(S)\equiv0 \big\} \quad\mbox{ and } \quad J_1:=\{1,\ldots,m\} \backslash J_0.
\end{equation}
\end{definition}

Now we come to our key technical result which provides a local error bound for convex polynomial systems. The main idea of the proof is to use the extreme rigidity of convex polynomial functions (see Fact \ref{lemma:3.1} and \ref{Factinft:1}) to reduce the
problem to the one of comparing a (power of a) nonnegative convex polynomial
vanishing only at 0 with the norm so that the estimate  of the Lojasiewicz inequality
that we introduced in Fact \ref{lemma:3.2} and Fact \ref{KolThe:1} can be applied.

\begin{theorem}{\bf (Local error bounds for convex polynomial systems)} \label{Tprop:2.0} Let $g_i$  be convex polynomials on $\mathbb{R}^n$ with degree at most $d$ for every $i=1,\cdots,m$. Let $S:=\{x \in \mathbb{R}^n\ \mid g_i(x) \le 0, i=1,\cdots,m\}$ and $\bar x \in S$. Then there exist $c, \varepsilon > 0$ such that
\[
\di(x,S)\le c\bigg(\max_{i \in J_1}[g_i(x)]_+ + \big(\max_{i \in J_0} [g_i(x)]_+\big)^{\tau}\bigg)\quad  { whenever } \quad \|x-\overline{x}\|\le\varepsilon,
\]
where $[a]_+:=\max\{a,0\}$, $\tau:=\max\big\{\frac{2}{\kappa(n,2d)}, \frac{1}{\beta(n-1)d^n}\big\}$, $\kappa(n,2d):=(2d-1)^n+1$, $\beta(n-1)$ is the central binomial coefficient with respect to $n-1$ which is given by ${n-1 \choose {[(n-1)/2] }}$, and $J_0, J_1$ are defined as in (\ref{eq:J1}).
\end{theorem}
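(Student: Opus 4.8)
The plan is to reduce the general convex polynomial system to two special ``model'' systems for which the effective {\L}ojasiewicz estimates (Fact~\ref{lemma:3.2} and Fact~\ref{KolThe:1}) directly apply, and then to paste the pieces together. First I would split the analysis according to the index decomposition $\{1,\ldots,m\}=J_0\cup J_1$. For $i\in J_1$ there is a point of $S$ at which $g_i$ is strictly negative (otherwise $g_i$ would vanish identically on $S$, forcing $i\in J_0$; here one uses convexity of $g_i$ and of $S$), so Fact~\ref{lemma:li1} (or Fact~\ref{lemma:Robinson}) gives a \emph{Lipschitz} local error bound for the single set $\{g_i\le 0\}$, i.e.\ $\di(x,\{g_i\le 0\})\le c[g_i(x)]_+$ near $\bar x$. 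Thus the $J_1$-constraints contribute only linearly, which matches the linear term in the asserted bound; the real work is all in the $J_0$-block.

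Second, for the $J_0$-block I would set $g_{J_0}:=\max_{i\in J_0}g_i$ and $S_0:=\{x\mid g_i(x)\le 0,\ i\in J_0\}\supseteq S$, and I would want a bound of the form $\di(x,S_0)\le c\,[g_{J_0}(x)]_+^{\tau}$ near $\bar x$. The key structural fact is that since each $g_i$ ($i\in J_0$) is convex and vanishes on all of $S$ (which is a convex set of some dimension), $g_i$ is \emph{constant along every direction parallel to $\aff S$}: invoking Fact~\ref{Factinft:1} together with Fact~\ref{lemma:3.1}, $g_i^\infty(v)=0$ for $v$ in the lineality directions of $S$, hence $g_i(x+tv)=g_i(x)$. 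So after an orthogonal change of coordinates splitting $\RR^n=L\oplus L^\perp$ where $L=\aff S-\bar x$, every $g_i$ ($i\in J_0$) depends only on the $L^\perp$-coordinates, call them $y\in\RR^{n'}$ with $n'=n-\dim S$, and in those coordinates $\widetilde g_i(y)\ge 0$ with $\widetilde g_i(y)=0$ only at $y=0$ (locally), because on $L^\perp$ the set $S$ reduces to the single point $\bar x$. Now $\di(x,S_0)\asymp\|y\|$ locally, and $g_{J_0}$ in the $y$-variables is $\max_i\widetilde g_i(y)$, a maximum of nonnegative polynomials of degree $\le d$ in $n'\le n-1$ variables vanishing only at the origin. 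Applying Fact~\ref{KolThe:1} gives $\|y\|\le c\,(\max_i\widetilde g_i(y))^{1/(\beta(n'-1)d^{n'})}$, and since $n'\le n-1$ and $\beta(\cdot)d^{(\cdot)}$ is monotone, this is dominated by the exponent $\tfrac{1}{\beta(n-1)d^n}$. Alternatively, bounding $\max_i\widetilde g_i$ below by $(\sum_i \widetilde g_i^2)^{1/2}$ up to a constant, or using $\sum_i\widetilde g_i$ which is again a single nonnegative convex polynomial of degree $\le 2d$ (after squaring, degree $\le 2d$) vanishing only at $0$, Fact~\ref{lemma:3.2} applied to that polynomial yields the exponent $\tfrac{2}{\kappa(n,2d)}$ (the factor $2$ coming from passing from the degree-$2d$ sum-of-squares-type polynomial back to the original degree-$d$ data). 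Taking the better of the two gives $\tau=\max\{\tfrac{2}{\kappa(n,2d)},\tfrac{1}{\beta(n-1)d^n}\}$.

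Third, I would combine the two blocks. A standard estimate for intersections — using that each individual set already has a (Lipschitz or H\"olderian) local error bound and that near $\bar x$ one can iterate projections onto the two pieces, or directly via the triangle inequality $\di(x,S)\le \di(x,S_0)+\di(P_{S_0}x, S\cap S_0)$ combined with the $J_1$-bound applied at the nearby point $P_{S_0}x$ — lets one add the contributions, producing $\di(x,S)\le c\big(\max_{i\in J_1}[g_i(x)]_+ + (\max_{i\in J_0}[g_i(x)]_+)^{\tau}\big)$ for $x$ in a small ball around $\bar x$. Since $\tau\in(0,1]$ and the relevant quantities are small near $\bar x$, the H\"olderian term is the dominant one and the sum is genuinely an upper bound, not merely a heuristic.

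\textbf{Main obstacle.} I expect the technical heart to be the reduction to the origin-vanishing model in the $J_0$-block: one must show rigorously that every $g_i$ with $i\in J_0$ is constant along $\aff S-\bar x$ (requiring the recession-function argument of Fact~\ref{Factinft:1} plus the affine-constancy propagation of Fact~\ref{lemma:3.1}, and care that $S$ may be lower-dimensional), and that in the complementary coordinates $g_{J_0}$ vanishes \emph{only} at $\bar x$ locally (so that Fact~\ref{KolThe:1}/Fact~\ref{lemma:3.2} are applicable with ambient dimension $n-\dim S\le n-1$ rather than $n$), together with the comparison $\di(x,S_0)\asymp\|y\|$ locally. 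A secondary but nontrivial point is the accounting that turns the degree-$2d$ estimate into the stated $\tfrac{2}{\kappa(n,2d)}$ and the bookkeeping in the intersection step so that no extra dependence on $d$ sneaks into the linear $J_1$-term.
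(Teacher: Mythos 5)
Your $J_0/J_1$ split, the recession-function reduction to the $L^\perp$-coordinates, and the idea that $\widetilde S_0=\{0\}$ locally (which is indeed true, though it takes a short argument using a Slater point of $S$ for the $J_1$-constraints) capture part of the paper's strategy, and the Koll\'ar branch of the estimate does go through. But there is a genuine gap at the heart of the $J_0$-block. The derivation of the Gwo\'{z}dziewicz exponent $\frac{2}{\kappa(n,2d)}$ rests on the comparison ``$\max_i\widetilde g_i\gtrsim(\sum_i\widetilde g_i^2)^{1/2}$,'' i.e.\ $\sum_{i\in J_0}\widetilde g_i^2\le C\big(\max_{i\in J_0}[\widetilde g_i]_+\big)^2$, and that is \emph{false}; so are the intermediate assertions ``$\widetilde g_i(y)\ge 0$'' and ``$\sum_i\widetilde g_i$ is a nonnegative convex polynomial'' (the individual $g_i$ with $i\in J_0$ have no sign; $\sum g_i$ has degree $d$ and no definite sign, while $\sum g_i^2$ is nonnegative of degree $2d$ but not convex). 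Concretely, take $n=2$, $g_1(x,y)=x^2-y$, $g_2(x,y)=y$, $g_3(x,y)=x$: then $S=\{0\}$, $J_0=\{1,2,3\}$, and along $(t,t^2)$ with $t<0$ small one has $\sum_i g_i(t,t^2)^2\asymp t^2$ while $\big(\max_i[g_i(t,t^2)]_+\big)^2=t^4$, so the needed comparison fails arbitrarily badly. Consequently your proposal only proves the bound with exponent $\frac{1}{\beta(n-1)d^n}$; since the theorem asserts the maximum with $\frac{2}{\kappa(n,2d)}$ and the latter can be strictly larger (e.g.\ $n=3$, $d=2$: $2/28>1/16$), the claimed $\tau$ is not established.

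The paper's proof closes exactly this gap by \emph{not} working with $J_0$. It proceeds by induction on $m$. In the key case $J_0=\{1,\dots,m\}$, a convex separation argument yields multipliers $\alpha_i\ge 0$, $\sum\alpha_i=1$, with $\sum_i\alpha_ig_i\ge 0$ on all of $\mathbb R^n$, and one sets $I:=\{i:\alpha_i>0\}$, which may be a proper subset of $J_0$. The decomposition is $\mathbb R^n=M\oplus M^\perp$ with $M:=D-\bar x$ and $D:=\{g_i\le 0,\ i\in I\}$, which can be strictly larger than $\aff S$. The separation inequality delivers precisely the comparison $|g_i(x)|\le r\max_{j\in I}[g_j(x)]_+$ for $i\in I$ (the constant $r$ being built from the $\alpha_i$), which licenses $\sum_{i\in I}g_i^2\le|I|r^2(\max_{j\in I}[g_j]_+)^2$ and hence the passage from Gwo\'{z}dziewicz's degree-$2d$ bound to the stated exponent. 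Indices outside $I$ (such as $g_3=x$ in the example, where the comparison fails) are handled on the $M$-block by the induction hypothesis, which applies because fewer constraints remain; the active constraints in $J_1$ are likewise peeled off inductively, using $\inte A\cap B\ne\emptyset$ and \cite[Corollary~4.5]{Heinz_Set} rather than a one-shot triangle inequality. The missing ingredients in your sketch are therefore the separation multipliers producing $I$ and the $m$-induction that disposes of $J_0\setminus I$.
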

\begin{proof}
We prove the desired conclusion by induction on the number of the polynomials $m$.

{\bf [Trivial Case]} Suppose that $m=1$. Then $J_0=\{1\}$ or $J_0=\emptyset$. If $J_0=\{1\}$, then the conclusion follows by Fact~\ref{lemma:li0} since $\max\big\{\frac{2}{\kappa(n,2d)}, \frac{1}{\beta(n-1)d^n}\big\}\leq \frac{1}{\kappa(n,d)}$. If $J_0=\emptyset$, then there exists $x_0$ such that $g_1(x_0)<0$. In this case, the conclusion follows
by Fact~\ref{lemma:li1}.

{\bf [Reduction to the active cases]} Let us suppose that the conclusion is true for $m \le p-1$, $p \in \mathbb{N}$, and look at the case for $m=p$.  If $J_0 \neq \{1,\ldots,m\}$, then $\{1,\ldots,m\} \backslash J_0 \neq \emptyset$.
Let $i_0\notin J_0$. Then there exists $x_0\in S$ such that $g_{i_0}(x_0) < 0$.
 Set $J:=\big\{i\in\{1,2,\ldots,m\}\mid g_i(x_0)<0\big\}$. Then $i_0\in J\subseteq J_1$ and $J\cap J_0=\varnothing$.
Let $A,B$ be defined by \begin{align*}
A&:=\big\{x\in\RR^n\mid g_i(x)\leq0,\quad \forall i\in J\big\}\\
B&:=\big\{x\in\RR^n\mid g_i(x)\leq0,\quad \forall i\in \widetilde{J}\big\},
\end{align*}where $\widetilde{J}:=\{1,2,\ldots,m\}\backslash  J$.
 Thus, $J_0\subseteq \widetilde{J}$. Then we have $x_0\in\inte A\cap B$ and $S=A\cap B$.  Since $S\subseteq B$, we have
 \begin{align}\label{ExLSC:1}
 \widetilde{J_0}:=\big\{i\in \{1,2,\ldots,m\}\backslash  J\mid g_i(B)\equiv0\big\}\subseteq J_0.
 \end{align}
Since $\inte A\cap B \neq \emptyset$, \cite[Corollary~4.5]{Heinz_Set} implies that for every compact set $K$  there exist $\gamma,\delta>0$ such that
\begin{align}
\di(x, S)=\di(x,A \cap B) \le \gamma \max\{\di(x,A),\di(x,B)\} \quad \mbox{for all } x \in K.\label{Tspple:rc1}
\end{align}
Now applying Fact~\ref{lemma:Robinson} with $f(x)=\max_{i \in J}[g_i(x)]_+$,  there exists $c_1>0$ such that
\begin{align}
\di(x,A) \le c_1 \, \max_{i \in J}[g_i(x)]_+ ,\quad\forall x\in K.\label{Tspple:rc2}
\end{align}
\allowdisplaybreaks
From the induction hypothesis and \eqref{ExLSC:1}, we see that there exist $\varepsilon>0$ and $c_2>0$ such that
for every $\|x-\bar x\|\le \varepsilon$, $\max_{1\leq i\leq m }[g_i(x)]_+\leq 1$ such that
\begin{align*}
&\di(x,B)\\
&\le c_2\bigg(\max_{i \in \widetilde{J}\backslash  \widetilde{J}_0}[g_i(x)]_+ + \big(\max_{i \in \widetilde{J_0}} [g_i(x)]_+\big)^{\tau}\bigg) \\
&= c_2\bigg(\max\big\{\max_{i \in \widetilde{J}\backslash  J_0}[g_i(x)]_+, \max_{i \in J_0\backslash \widetilde{J}_0}[g_i(x)]_+ \big\}+ \big(\max_{i \in \widetilde{J_0}} [g_i(x)]_+\big)^{\tau}\bigg) \quad\text{(since $\widetilde{J_0}\subseteq J_0\subseteq \widetilde{J}$)}\\
&\leq c_2\bigg(\max_{i \in \widetilde{J}\backslash  J_0}[g_i(x)]_+ +\max_{i \in J_0\backslash \widetilde{J}_0}[g_i(x)]_+ + \big(\max_{i \in \widetilde{J_0}} [g_i(x)]_+\big)^{\tau}\bigg) \\
&\leq c_2\bigg(\max_{i \in \widetilde{J}\backslash  J_0}[g_i(x)]_+ +\big(\max_{i \in J_0\backslash \widetilde{J}_0}[g_i(x)]_+\big)^{\tau} + \big(\max_{i \in \widetilde{J_0}} [g_i(x)]_+\big)^{\tau}\bigg) \\
&\leq
c_2\bigg(\max_{i \in \widetilde{J}\backslash  J_0} [g_i(x)]_++ 2\big(\max_{i \in J_0} [g_i(x)]_+\big)^{\tau}\bigg)\\
&\leq 2c_2 \bigg(\max_{i \in \widetilde{J}\backslash  J_0} [g_i(x)]_++ \big(\max_{i \in J_0} [g_i(x)]_+\big)^{\tau}\bigg).
\end{align*}
Thus, the conclusion follows in this case  by combining \eqref{Tspple:rc1} and \eqref{Tspple:rc2}, and noting that $(\widetilde{J}\backslash  J_0) \cup J \subseteq J_1$.

 From now on, we may assume that $J_0=\{1,\ldots,m\}$. That is,
\[
\big\{x\mid g_i(x) \le 0,i=1,\cdots,m\big\}=\big\{x\mid g_i(x) = 0,i=1,\cdots,m\big\}.
\]
This implies that $\inf_{x \in \mathbb{R}^n} \max_{1 \le i \le m}\{g_i(x)\}=0$. Then, $$0_{\mathbb{R}^m} \notin \big\{(g_1(x),\cdots,g_m(x))
\mid x \in \mathbb{R}^n\big\}+\inte\mathbb{R}^m_+.$$
Hence the convex separation theorem ensures that there exist $\alpha_i \ge 0$ with $\sum_{i=1}^m\alpha_i=1$ such that
$\sum_{i=1}^m \alpha_i g_i(x) \ge 0 \mbox{ for all } x \in \mathbb{R}^n$.
Denote
$I:=\{i\mid \alpha_i >0\} \neq \emptyset$.
Then, we have $\sum_{i \in I}\alpha_i=1$ and
\begin{equation}\label{eq:66}
\sum_{i \in I} \alpha_i g_i(x) \ge 0 \quad\mbox{ for all } x \in \mathbb{R}^n.
\end{equation}
{\bf [Decompose the underlying space into sum of two subspaces $M$ and $M^{\bot}$]} Consider $D:=\{x\in\RR^n\mid g_i(x) \le 0, i \in I\}$. Clearly, $D$ is a convex set and $\bar x \in D$. Moreover, for any $v \in D$, (\ref{eq:66}) implies that
\begin{align}
g_i(v)=0, \quad \forall i \in I.\label{E2sump:1}
\end{align}
In other words, $g_i$ takes constant value $0$ on $D$. Then, Fact~\ref{lemma:3.1} implies that $D$ is either a singleton or an affine set with dimension larger than one.

Let $M:=D-\bar x$. Then $M$ is a subspace. We may decompose $\mathbb{R}^n=M +M^{\bot}$. Denote ${\rm dim}M=k$ $(k \le n)$.

We now see that
\begin{equation}\label{eq:003}
\sum_{i \in I} g_i^2(x)>0 \mbox{ for all } x-\bar x \in M^{\bot} \backslash\{0\}.
\end{equation}
Otherwise, there exists $x_0\in\RR^n$ such that $x_0-\bar{x}\in M^{\bot}\backslash\{0\}$ and
$g_i(x_0)=0$ for all $i \in I$. This shows that $x_0 \in D$. Thus
 $x_0-\bar x \in M$ and hence $x_0-\bar x \in M\cap M^{\bot}$. This contradicts the fact that $x_0-\bar x \neq0$.

Similarly, we have
\begin{equation}\label{KolEeq:1}
\max_{i \in I} g_i(x)>0 \mbox{ for all } x-\bar x \in M^{\bot} \backslash\{0\}.
\end{equation}

{\bf [Distance estimation on $M^{\bot}$]} We first show that there exist $\varepsilon_0,\gamma_0>0$ such that
\begin{equation}\label{eq:002}
\|x-\bar x\| \le \gamma_0 \bigg(\sum_{i \in I} g_i^2(x)\bigg)^{\frac{1}{\kappa(n-k,2d)}},\quad\forall x-\bar x \in M^{\bot}\cap \mathbb{B}(0,\varepsilon_0).
\end{equation}
Since $\dim M^{\bot}=n-k$, there exists an $n\times (n-k)$ matrix $Q_0$ with the rank $n-k$ such that $Q_0(\RR^{n-k})=M^{\bot}$. Then $Q_0$ is a bijective operator from $\RR^{n-k}$ to $M^{\bot}$.
Then \eqref{eq:003} shows that
\begin{align}\label{E3sump:1}
\sum_{i \in I} g_i^2(\bar{x}+Q_0b)>0,\quad \forall b\in\RR^{n-k} \backslash\{0\}.
\end{align}
Define $h:\RR^{n-k}\rightarrow\RR$
by $h(b):=\sum_{i \in I}g_i^2(\bar{x}+Q_0b)$. Then $h(0)=\sum_{i \in I}g_i^2(\bar{x})=0$ by \eqref{E2sump:1}.  Thus by  \eqref{E3sump:1} and Fact~\ref{lemma:3.2},
there exist ${\varepsilon_1},{\gamma_1}>0$ such that for all
\begin{align*}
\|b\|\leq {\gamma_1} h(b)^{\frac{1}{\kappa(n-k,2d)}}={\gamma_1}\Big(\sum_{i \in I}g_i^2(\bar{x}+Q_0b)\Big)^{\frac{1}{\kappa(n-k,2d)}},\quad\forall \|b\|\leq{\varepsilon_1}.
\end{align*}
Setting $x:=\bar{x}+Q_0b$, it follows that
\begin{align*}\|x-\bar{x}\|&=\|Q_0\big(Q_0^{-1}(x-\bar{x})\big)\|\leq\|Q_0\|\cdot\|Q_0^{-1}(x-\bar{x})\|\\
&\leq  \|Q_0\| {\gamma_1}\Big(\sum_{i \in I}g_i^2(x)\Big)^{\frac{1}{\kappa(n-k,2d)}},\quad\forall \|Q_0^{-1}(x-\bar{x})\|\leq{\varepsilon_1}.
\end{align*}
Hence, there exist $\varepsilon_1,\gamma_1>0$ such that
\begin{equation*}
\|x-\bar x\| \le \gamma_0 \bigg(\sum_{i \in I} g_i^2(x)\bigg)^{\frac{1}{\kappa(n-k,2d)}},\quad\forall x-\bar x \in M^{\bot}\cap \mathbb{B}(0,\varepsilon_0) .
\end{equation*}
Thence \eqref{eq:002} holds.

By Fact~\ref{KolThe:1} and \eqref{KolEeq:1}, there exist
$\widetilde{\varepsilon_0},\widetilde{\gamma_0}>0$ such that
$\widetilde{\varepsilon_0}\leq \varepsilon_0$ and, for all $x$ with $x-\bar x \in M^{\bot}\cap \mathbb{B}(0,\widetilde{\varepsilon_0})$,
\[
\max_{i \in I} g_i(x) \leq 1
\]
and
\begin{align}
 \|x-\bar x\| &\le \widetilde{\gamma_0} \bigg(\max_{i \in I} g_i(x)\bigg)^{\frac{1}{\beta(n-k-1)d^{n-k}}}\nonumber\\
&=\widetilde{\gamma_0} \bigg(\max_{i \in I} [g_i(x)]_+\bigg)^{\frac{1}{\beta(n-k-1)d^{n-k}}} .\label{KolEeq:2}
\end{align}

{\bf [Distance estimation on $M$]}
Set $r:=\begin{cases}\max\{\frac{\sum_{j \in I \backslash \{i\}}\alpha_j}{\alpha_i}\mid i \in I\}>0,\quad \,&\text{if}\,|I|\geq2;\\
1,&\,\text{otherwise}.
\end{cases}$. Thus $r\geq1$. Note that $\sum_{i \in I}\alpha_i g_i(x) \ge 0$, we have for each $i \in I$
\[
\max_{i \in I}[g_i(x)]_+ \ge g_i(x) \ge -\frac{\sum_{j \in I \backslash \{i\}}\alpha_j g_j(x)}{\alpha_i} \ge -r \max_{i \in I}[g_i(x)]_+\ .
\]
Hence we have $|g_i(x)| \le r \max_{i \in I}[g_i(x)]_+$. This together with (\ref{eq:002}) implies that
\begin{equation*}
\|x-\bar x\| \le  \gamma_0 r^2 |I| \bigg(\max_{i \in I}[g_i(x)]_+\bigg)^{\frac{2}{\kappa(n-k,2d)}},\quad\forall x-\bar x \in M^{\bot}\cap \mathbb{B}(0,\varepsilon_0).
\end{equation*}
Combining this with \eqref{KolEeq:2}, we see that, for every $x-\bar x \in M^{\bot}\cap \mathbb{B}(0,\widetilde{\varepsilon_0})$,
\begin{equation}\label{eq:0066}
\|x-\bar x\| \le  \big(\gamma_0 r^2 |I|+\widetilde{\gamma_0} \big) \bigg(\max_{i \in I}[g_i(x)]
_+\bigg)^{\max\{\frac{2}{\kappa(n-k,2d)},\frac{1}{\beta(n-k-1)d^{n-k}}\}}.
\end{equation}

We now consider two cases.

\emph{Case 1}: $\dim M=\{0\}$.

 We have $D=S=\{\bar{x}\}$.
Thus $M=0$ and $M^{\bot}=\RR^n$.
We can assume that $\max_{i\in I}[g_i(x)]_+ \le 1$ for all $x-\bar x
\in\mathbb{B}(0,\widetilde{\varepsilon_0})$.
Then by \eqref{eq:0066},
we have
\begin{align*}
\di(x,S)&=\|x-\bar x\| \le  \big(\gamma_0 r^2 |I|+\widetilde{\gamma_0} \big) \bigg(\max_{i \in I}[g_i(x)]_+\bigg)^{\max\big\{\frac{2}{\kappa(n,2d)},
\frac{1}{\beta(n-1)d^{n}}\big\}},\quad \forall \|x-\overline{x}\|\le\widetilde{\varepsilon_0}.
\end{align*}

\emph{Case 2}:  $k=\dim M\geq 1$.

Since $\dim M=k$, there exists a full rank matrix $Q \in \mathbb{R}^{n \times k}$ such that $Q(\mathbb{R}^k)=M$.  For each $u \in M$ and $i \in I$, \eqref{E2sump:1} implies that
\[
g_i^{\infty}(u)=\lim_{t \rightarrow \infty}\frac{g_i(\bar x+tu)-g_i(\bar x)}{t}=0.
\]
 Then, Fact~\ref{Factinft:1} implies that
\begin{equation} \label{eq:directional_constant}
g_i(x+u)=g_i(x) \quad\mbox{ for all } x \in \mathbb{R}^n, u \in M, i\in I.
\end{equation}
Since $S\subseteq D=\bar x+M$, it follows that
\[
S=\{x \in \bar x+M \in \mathbb{R}^n\mid g_i(x) \le 0, i \notin I\}=\bar x+Q(\hat{S}),
\]
where $\hat{S}:=\{a \in \mathbb{R}^k\mid g_i(\bar x+Qa) \le 0, i \notin I\}$.

Note that $0 \in \hat{S}$.
The induction hypothesis implies that there exist $\widetilde{\varepsilon_1},\widetilde{\gamma_1}>0$ such that $\max_{i \notin I}[g_i(\bar x+Qa)]_+\leq1$ and
\[
\di(a,\hat{S}) \le \widetilde{\gamma_1} \Big(\max_{i \notin I}[g_i(\bar x+Qa)]_+\Big)^{\max\big\{\frac{2}{\kappa(k,2d)},\frac{1}{\beta(k-1)d^k}\big\}} \quad\mbox{ for all } \|a\| \le \widetilde{\varepsilon_1}.
\]
This implies that there exist $\epsilon_2,\gamma_2>0$ such that
\begin{equation}\label{eq:001}
\di(x,S) \le \gamma_2 \Big(\max_{i \notin I}[g_i(x)]_+\Big)^{\max\big\{\frac{2}{\kappa(k,2d)},\frac{1}{\beta(k-1)d^k}\big\}}\quad \mbox{ for all } x-\bar x \in M \cap \mathbb{B}(0,\varepsilon_2).
\end{equation}

{\bf [Combining the estimation and simplification]} Now let $\varepsilon \le \min\{\widetilde{\varepsilon_0,}\varepsilon_2\}$ be such that $\max_{1 \le i \le m}[g_i(x)]_+ \le 1$ for all $x \in \mathbb{B}(\bar x, \varepsilon)$. Let $K$ be a compact set containing $\mathbb{B}(\bar x,\varepsilon) \cup \mathbb{B}(0,\varepsilon)$.  Denote the Lipschitz constant of $g_i$ over $K$ by $L_i$, i.e., $|g_i(x_1)-g_i(x_2)|\le L_i\|x_1-x_2\|$ for all $x_1,x_2 \in K$. Set $L:=\max_{1 \le i \le m}L_i$ and $\gamma:=\max\{\gamma_0 r^2|I|+\widetilde{\gamma_0},\gamma_2\}$.

 To arrive at  the conclusion, we only need to show that for any $x \in  \mathbb{B}(\bar x,\varepsilon)$,
 \[
   \di(x,S) \le c \bigg(\max_{1 \le i \le m}[g_i(x)]_+\bigg)^{\max\big\{\frac{2}{\kappa(n,2d)},\frac{1}{\beta(n-1)d^n}\big\}},
 \]
 where $c:=2\gamma+L\gamma^2.$ To see this, let us fix an arbitrary  $x \in  \mathbb{B}(\bar x,\varepsilon)$. Note that $\mathbb{R}^n=M+M^{\bot}$.  Then, one can decompose $x-\bar x=u+v$ for some $u \in M\cap \mathbb{B}(0,\varepsilon)$ and $v \in M^{\bot}\cap \mathbb{B}(0,\varepsilon)$.
 This together with (\ref{eq:001}) and (\ref{eq:0066}) implies that
\begin{align}
 \di(u+\bar x,S) &\le \gamma \Big(\max_{i \notin I}[g_i(u+\bar x)]_+\Big)^{\max\big\{\frac{2}{\kappa(k,2d)},\frac{1}{\beta(k-1)d^k}\big\}}\label{KolEeq:4}\\
 \|v\|& \le \gamma\bigg(\max_{i \in I}[g_i(v+\bar x)]_+\bigg)^{\max\big\{\frac{2}{\kappa(n-k,2d)},\frac{1}{\beta(n-k-1)d^{n-k}}\big\}}.\label{KolEeq:5}
\end{align}
Therefore,
\begin{align*}
 &\di(x,S)  \le  \di(u+\bar x,S)+\|x-(u+\bar x)\| \\
 &= \di(u+\bar x,S)+\|v \| \\
 & \le  \gamma \bigg(\max_{i \notin I}[g_i(u+\bar x)]_+\bigg)^{\max\big\{\frac{2}{\kappa(k,2d)},\frac{1}{\beta(k-1)d^k}\big\}} +\gamma  \bigg(\max_{i \in I}[g_i(v+\bar x)]_+\bigg)^{\max\big\{\frac{2}{\kappa(n-k,2d)},\frac{1}{\beta(n-k-1)d^{n-k}}\big\}} \\
 & = \gamma \bigg(\max_{i \notin I}[g_i(u+\bar x)]_+\bigg)^{\max\big\{\frac{2}{\kappa(k,2d)},\frac{1}{\beta(k-1)d^k}\big\}} +\gamma   \bigg(\max_{i \in I}[g_i(x)]_+\bigg)^{\max\big\{\frac{2}{\kappa(n,2d)},\frac{1}{\beta(n-1)d^{n}}\big\}} \\
 & \le  \gamma \bigg(\max_{1 \le i \le m}[g_i(u+\bar x)]_+\bigg)^{\max\big\{\frac{2}{\kappa(k,2d)},\frac{1}{\beta(k-1)d^k}\big\}} +\gamma  \bigg(\max_{1 \le i \le m}[g_i(x)]_+\bigg)^{\max\big\{\frac{2}{\kappa(n,2d)},\frac{1}{\beta(n-1)d^{n}}\big\}},
\end{align*}
where the second equality follows by (\ref{eq:directional_constant}) and $v+\bar x+u=x$.
 Note that
\begin{align}
&\big|\max_{1 \le i \le m }[g_i(u+\bar x)]_+-\max_{1 \le i \le m}[g_i(x)]_+\big|
 \le \max_{1 \le i \le m}|g_i(u+\bar x)-g_i(x)|\nonumber \\
 & \le  L \|u+\bar x-x\|
=  L \|v\| \nonumber\\
& \le  L \gamma \Big(\max_{i \in I}[g_i(v+\bar x)]_+\Big)^{\max\big\{\frac{2}{\kappa(n-k,2d)},\frac{1}{\beta(n-k-1)d^{n-k}}\big\}} \quad\text{(by \eqref{KolEeq:5})}\nonumber\\
& =  L \gamma \Big(\max_{i \in I}[g_i(x)]_+\Big)^{\max\big\{\frac{2}{\kappa(n-k,2d)},\frac{1}{\beta(n-k-1)d^{n-k}}\}}
\quad\text{(by \ref{eq:directional_constant}) and $v+\bar x+u=x$)}\label{KolEeq:7a}.
\end{align}
\allowdisplaybreaks
As $\max_{1 \le i \le m}[g_i(x)]_+ \le 1$ for all $x \in \mathbb{B}(\bar x, \varepsilon)$, it follows that
{\small \begin{align}\label{KolEeq:7}
\di(x,S) & \le  \gamma \bigg(\max_{1 \le i \le m}[g_i(u+\bar x)]_+\bigg)^{\frac{2}{\kappa(k,2d)}}
 \quad+\gamma \bigg(\max_{1 \le i \le m}[g_i(x)]_+\bigg)^{\max\big\{\frac{2}{\kappa(n,2d)},\frac{1}{\beta(n-1)d^{n}}\big\}}\nonumber \\
 & \le   \gamma \bigg( \max_{1 \le i \le m}[g_i(x)]_++L \gamma (\max_{i \in I}[g_i(x)]_+^{\frac{2}{\kappa(n-k,2d)}})\bigg)^{\frac{2}{\kappa(k,2d)}}\quad\text{(by \eqref{KolEeq:7a})} \nonumber\\ &\quad+\gamma \bigg(\max_{1 \le i \le m}[g_i(x)]_+\bigg)^{\max\big\{\frac{2}{\kappa(n,2d)},\frac{1}{\beta(n-1)d^{n}}\big\}} \nonumber\\
  & \le   \gamma \bigg( \max_{1 \le i \le m}[g_i(x)]^{\frac{2}{\kappa(n-k,2d)}} _++L \gamma(\max_{i \in I}[g_i(x)]_+^{\frac{2}{\kappa(n-k,2d)}})\bigg)^{\frac{2}{\kappa(k,2d)}}\nonumber\\
  &\quad +\gamma  \bigg(\max_{1 \le i \le m}[g_i(x)]_+\bigg)^{
 \max\big\{\frac{2}{\kappa(n,2d)},\frac{1}{\beta(n-1)d^{n}}\big\}} \nonumber\\
   & \leq       (\gamma+L\gamma^2)  \max_{1 \le i \le m}[g_i(x)]^{\frac{4}{\kappa(n-k,2d)\cdot\kappa(k,2d)}} _+
  +\gamma  \bigg(\max_{1 \le i \le m}[g_i(x)]_+\bigg)^{
   \max\big\{\frac{2}{\kappa(n,2d)},\frac{1}{\beta(n-1)d^{n}}\big\}}\nonumber \\
  &= (\gamma+L\gamma^2)  \max_{1 \le i \le m}[g_i(x)]^{\frac{4}{\big((2d-1)^{n-k}+1\big)\cdot\big((2d-1)^k+1\big)}} _+
+\gamma  \bigg(\max_{1 \le i \le m}[g_i(x)]_+\bigg)^{
  \max\big\{\frac{2}{\kappa(n,2d)},\frac{1}{\beta(n-1)d^{n}}\big\}} \nonumber\\
  &\leq   (\gamma+L\gamma^2)  \max_{1 \le i \le m}[g_i(x)]^{\frac{2}{(2d-1)^{n}+1}}_+
   +\gamma  \bigg(\max_{1 \le i \le m}[g_i(x)]_+\bigg)^{\max\big\{\frac{2}{\kappa(n,2d)},\frac{1}{\beta(n-1)d^{n}}\big\}}.
\end{align}}
Similarly, we also have
{\small \begin{align}
 \di(x,S) & \le  \gamma \bigg(\max_{1 \le i \le m}[g_i(u+\bar x)]_+\bigg)^{\frac{1}{\beta(k-1)d^k}}
+\gamma \bigg(\max_{1 \le i \le m}[g_i(x)]_+\bigg)^{\max\big\{\frac{2}{\kappa(n,2d)},\frac{1}{\beta(n-1)d^{n}}\}} \nonumber\\
 & \le   \gamma \bigg( \max_{1 \le i \le m}[g_i(x)]_++L \gamma (\max_{i \in I}[g_i(x)]_+^{\frac{1}{\beta(n-k-1)d^{n-k}}})\bigg)^{\frac{1}{\beta(k-1)d^k}}\quad\text{(by
 \eqref{KolEeq:7a})} \nonumber\\ &\quad+\gamma \bigg(\max_{1 \le i \le m}[g_i(x)]_+\bigg)^{\max\big\{\frac{2}{\kappa(n,2d)},\frac{1}{\beta(n-1)d^{n}}\big\}} \nonumber\\
  & \le   \gamma \bigg( \max_{1 \le i \le m}[g_i(x)]^{\frac{1}{\beta(n-k-1)d^{n-k}}} _++L \gamma(\max_{i \in I}[g_i(x)]_+^{\frac{1}{\beta(n-k-1)d^{n-k}}})\bigg)^{\frac{1}{\beta(k-1)d^k}}\nonumber\\
  &\quad +\gamma  \bigg(\max_{1 \le i \le m}[g_i(x)]_+\bigg)^{
 \max\big\{\frac{2}{\kappa(n,2d)},\frac{1}{\beta(n-1)d^{n}}\big\}} \nonumber\\
  &= (\gamma+L\gamma^2)  \max_{1 \le i \le m}[g_i(x)]^{\frac{1}{\big(\beta(n-k-1)d^{n-k}\big)\cdot\big(\beta(k-1)d^k\big)}} _+
 +\gamma  \bigg(\max_{1 \le i \le m}[g_i(x)]_+\bigg)^{
  \max\big\{\frac{2}{\kappa(n,2d)},\frac{1}{\beta(n-1)d^{n}}\big\}} \nonumber\\
  &\leq   (\gamma+L\gamma^2)  \max_{1 \le i \le m}[g_i(x)]^{\frac{1}{\beta(n-1)d^n}} _+
  +\gamma  \bigg(\max_{1 \le i \le m}[g_i(x)]_+\bigg)^{\max\big\{\frac{2}{\kappa(n,2d)},\frac{1}{\beta(n-1)d^{n}}\big\}},\nonumber
\end{align}} where the last inequality was obtained by the Chu-Vandermonde identity.

In combination  with \eqref{KolEeq:7} we obtain
{\small \begin{align*}
\di(x,S)& \le
(\gamma+L\gamma^2)  \max_{1 \le i \le m}[g_i(x)]^{\max\big\{ \frac{2}{(2d-1)^{n}+1} ,\frac{1}{\beta(n-1)d^n}\big\}} _+
   \quad+\gamma  \bigg(\max_{1 \le i \le m}[g_i(x)]_+\bigg)^{\max\big\{\frac{2}{\kappa(n,2d)},\frac{1}{\beta(n-1)d^{n}}\big\}}\\
   &=(2\gamma+L\gamma^2)  \max_{1 \le i \le m}[g_i(x)]^{\max\big\{ \frac{2}{(2d-1)^{n}+1} ,\frac{1}{\beta(n-1)d^n}\}} _+.
\end{align*}}
This completes the proof. \qed
\end{proof}

As a corollary, we obtain a local error bound result which is independent of the partition of the index set.
\begin{corollary}\label{prop:2.0}
Let $g_i$  be convex polynomials on $\mathbb{R}^n$ with degree at most $d$ for every $i=1,\cdots,m$. Let $S:=\{x \in \mathbb{R}^n\ \mid g_i(x) \le 0, i=1,\cdots,m\}$ and $\bar x \in S$. Then there exist $c, \varepsilon > 0$ such that
\[
\di(x,S)\le c \Big(\max_{1 \le i \le m} [g_i(x)]_+\Big)^{\tau} \quad  { whenever } \quad \|x-\overline{x}\|\le\varepsilon,
\]
where $[a]_+:=\max\{a,0\}$, $\tau:=\max\big\{\frac{2}{\kappa(n,2d)}, \frac{1}{\beta(n-1)d^n}\big\}=\frac{1}{\min\big\{\frac{(2d-1)^n+1}{2},\, \beta(n-1)d^n\big\}}$ and  $\beta(n-1)$ is the central binomial coefficient with respect to $n-1$.
\end{corollary}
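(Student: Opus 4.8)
The plan is to obtain Corollary~\ref{prop:2.0} directly from Theorem~\ref{Tprop:2.0} by merging the two summands on the right-hand side of that theorem into a single power of $\max_{1\le i\le m}[g_i(x)]_+$. So first I would invoke Theorem~\ref{Tprop:2.0} with the partition $\{1,\dots,m\}=J_0\cup J_1$ of \eqref{eq:J1} to get constants $c_0,\varepsilon_0>0$ with
\[
\di(x,S)\le c_0\Big(\max_{i\in J_1}[g_i(x)]_+ + \big(\max_{i\in J_0}[g_i(x)]_+\big)^{\tau}\Big)\qquad\text{whenever }\|x-\bar x\|\le\varepsilon_0,
\]
where $\tau=\max\big\{\tfrac{2}{\kappa(n,2d)},\tfrac{1}{\beta(n-1)d^n}\big\}$.

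Next I would exploit that $\tau\in(0,1]$: indeed $\kappa(n,2d)=(2d-1)^n+1\ge 2$ and $\beta(n-1)d^n\ge 1$, so both $\tfrac{2}{\kappa(n,2d)}\le 1$ and $\tfrac{1}{\beta(n-1)d^n}\le 1$. Hence the elementary inequality $t\le t^{\tau}$ holds for all $t\in[0,1]$. Since every $g_i$ is continuous and $g_i(\bar x)\le 0$, I can shrink to some $\varepsilon\in(0,\varepsilon_0]$ with $\max_{1\le i\le m}[g_i(x)]_+\le 1$ on $\mathbb{B}(\bar x,\varepsilon)$; then for such $x$ and every index $i$ one has $[g_i(x)]_+\le [g_i(x)]_+^{\tau}\le\big(\max_{1\le j\le m}[g_j(x)]_+\big)^{\tau}$. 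Taking the maximum over $i\in J_1$ gives $\max_{i\in J_1}[g_i(x)]_+\le\big(\max_{1\le i\le m}[g_i(x)]_+\big)^{\tau}$, and monotonicity of $t\mapsto t^{\tau}$ yields $\big(\max_{i\in J_0}[g_i(x)]_+\big)^{\tau}\le\big(\max_{1\le i\le m}[g_i(x)]_+\big)^{\tau}$ as well (with the convention that an empty maximum equals $0$, so these bounds are vacuous when $J_0$ or $J_1$ is empty). Adding the two bounds and plugging into the displayed estimate gives $\di(x,S)\le 2c_0\big(\max_{1\le i\le m}[g_i(x)]_+\big)^{\tau}$ on $\mathbb{B}(\bar x,\varepsilon)$, so $c:=2c_0$ works.

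Finally, the alternative expression for the exponent is a one-line identity, $\tau=\max\big\{\tfrac{2}{\kappa(n,2d)},\tfrac{1}{\beta(n-1)d^n}\big\}=\tfrac{1}{\min\{\kappa(n,2d)/2,\ \beta(n-1)d^n\}}=\tfrac{1}{\min\{((2d-1)^n+1)/2,\ \beta(n-1)d^n\}}$, which finishes the proof. There is no real obstacle here — the corollary is a cosmetic consequence of the theorem — and the only points needing a moment's care are checking $\tau\le 1$ (so that $t\le t^{\tau}$ is available on $[0,1]$) and fixing the empty-maximum convention in case $J_0$ or $J_1$ is empty.
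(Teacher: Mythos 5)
Your proposal is correct and matches the paper's own proof: both shrink $\varepsilon$ so that $\max_i[g_i(x)]_+\le 1$ on $\mathbb{B}(\bar x,\varepsilon)$ and then absorb the two summands from Theorem~\ref{Tprop:2.0} into a single $\big(\max_i[g_i(x)]_+\big)^{\tau}$ using $t\le t^{\tau}$ on $[0,1]$. If anything, your write-up is slightly cleaner than the paper's one-liner, which quotes the exponent $\tfrac{2}{\kappa(n,2d)}$ where $\tau$ is what is actually needed.
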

\begin{proof}
Choose $\varepsilon$ small enough so that $\max_{1 \le i \le m} [g_i(x)]_+ \le 1$. Then,  the conclusion follows immediately from the preceding Theorem~\ref{Tprop:2.0} by noting that $[g_i(x)]_+ \le \big(\max_{1 \le i \le m} [g_i(x)]_+\big)^{\frac{2}{\kappa(n,2d)}}$ for each $i=1,\ldots,m$.  \qed
\end{proof}

\begin{remark}{\bf (Discussion of the  exponent)}\label{Remark:3.5} Let $g_i$  be convex polynomials on $\mathbb{R}^n$ with degree at most $d$ for every $i=1,\cdots,m$. Let $S:=\{x \in \mathbb{R}^n\ \mid g_i(x) \le 0, i=1,\cdots,m\}$ and $\bar x \in S$.   We now make some discussion on the exponent in our local error bound results.
\begin{itemize}
\item[{\rm (1)}] Theorem~\ref{Tprop:2.0} shows that in the case when $d=1$ or if there exists $x_0 \in \mathbb{R}^n$ such that $g_i(x_0)<0$, $i=1,\ldots,m$, we indeed obtain a Lipschitz type local error bound. That is to say, in these cases, there exist $c, \varepsilon > 0$ such that
\[
\di(x,S)\le c \, \max_{1 \le i \le m}[g_i(x)]_+  \quad  { whenever } \quad \|x-\overline{x}\|\le\varepsilon,
\]
where $[a]_+:=\max\{a,0\}$. To see this, if $d=1$, then $\frac{2}{\kappa(n,2d)}=1$ and $\frac{1}{\beta(n-1)d^n} \ge 1$. So, $\tau=\max\big\{\frac{2}{\kappa(n,2d)}, \frac{1}{\beta(n-1)d^n}\big\}=1$ and the conclusion follows immediately from Theorem~\ref{Tprop:2.0}. On the other hand, if there exists $x_0 \in \mathbb{R}^n$ such that $g_i(x_0)<0$, $i=1,\ldots,m$, then, $J_0=\emptyset$. So, the conclusion  follows immediately from the same theorem.
\item[{\rm (2)}] In the case when $n=1$, we see that $\frac{2}{\kappa(n,2d)}=\frac{1}{\beta(n-1)d^n}=\frac{1}{d}$. So, when each $g_i$ is a univariate convex polynomial, then there exist $c, \varepsilon > 0$ such that
\[
\di(x,S)\le c \big(\max_{1 \le i \le m} [g_i(x)]_+\big)^{\frac{1}{d}} \quad  { whenever } \quad \|x-\overline{x}\|\le\varepsilon.
\]
Note that, for the naive simple example $g_1(x):=x^d$, local error bound holds at $0$ with exponent $\frac{1}{d}$. This suggests that our result  matches what one might expect in the univariate case.

\item[{\rm (3)}]  On the other hand, in general, our estimation on the exponent will not be optimal.

For example, if the inequality system  consists of one single convex polynomial, Fact \ref{lemma:li0} shows that the exponent can be set as $\frac{1}{(d-1)^n+1}$ while our results produce a weaker exponent $\max\big\{\frac{2}{(2d-1)^n+1}, \frac{1}{\beta(n-1)d^n}\big\}$.
An interesting feature of the exponent $\frac{1}{(d-1)^n+1}$ in Fact \ref{lemma:li0} is that, in the  convex quadratic case, it collapses to $\frac{1}{2}$ which is independent of the
dimension of the underlying space and which agrees with the known result presented in \cite{Li_Wu}. By contrast, our estimate   $\max\big\{\frac{2}{3^n+1}, \frac{1}{\beta(n-1)2^n}\big\}$ depends heavily  on the dimension $n$.

Moreover, as indicated in Example \ref{ex:01}, the best possible exponent might be $\frac{1}{d^n}$ (see \cite{Kollar} for some relevant discussion regarding the best possible exponent for general nonconvex polynomial system).
It would be interesting to find how to could improve our estimate here. 
\end{itemize}
Making better sense of these estimates  will be one of our future research topics.\qede
\end{remark}

Given $D\subseteq\RR^n$, we set $\di^{r}(\cdot,D):=\big(\di(\cdot,D)\big)^{r}$ for every $r\in\RR$.

\begin{theorem}[H\"olderian regularity]\label{ThesumSet:1}
Let $\gamma_i \in \mathbb{N}$, $i=1,\ldots,m$, and  $g_{i,j}$ be are convex polynomials  on $\RR^n$ with degree $d \in \mathbb{N}$, $j=1,\ldots,\gamma_i$, $i=1,\ldots,m$.
Recall that $$C_i=\Big\{x\in\RR^n\mid g_{i,j}(x)\leq 0, j=1,\ldots,\gamma_i  \Big\} \mbox{ and } C=\bigcap_{i=1}^m C_i.$$
Let $\theta>0$ and $K\subseteq\RR^n$ be a compact set.
  Then  there exists  $c> 0$ such that
$$\di^{\theta} (x,C) \le c \bigg(\sum_{i=1}^m \di^{\theta}(x, C_i)\bigg)^{\tau}, \quad \forall x \in K,$$
where $\tau:=\frac{1}{\min\big\{\frac{(2d-1)^n+1}{2},\, \beta(n-1)d^n\big\}}$ and  $\beta(n-1)$ is the central binomial coefficient with respect to $n-1$ which is given by ${n-1 \choose {[(n-1)/2] }}$.
\end{theorem}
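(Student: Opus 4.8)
The plan is to reduce the global H\"olderian regularity statement on a compact set $K$ to the local error bound result already obtained in Corollary~\ref{prop:2.0}, and then patch the local estimates together by a standard compactness argument. First I would observe that it suffices to prove the claim for $\theta=1$: indeed, once we have $\di(x,C)\le c'\big(\sum_{i=1}^m\di(x,C_i)\big)^{\tau}$ on $K$, raising both sides to the power $\theta$ and using the elementary inequality $\big(\sum a_i\big)^{\theta}\le c''\sum a_i^{\theta}$ (valid with a dimension-only constant on bounded sets, and for $\theta\ge 1$ directly) converts it to the stated form with a new constant. So the heart of the matter is the case $\di^\theta=\di$.

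Next I would set up the local-to-global passage. Each $C_i$ is a basic semi-algebraic convex set defined by the convex polynomials $g_{i,j}$, so $C=\{x\mid g_{i,j}(x)\le 0,\ j=1,\ldots,\gamma_i,\ i=1,\ldots,m\}$ is itself a basic semi-algebraic convex set defined by $\sum_i\gamma_i$ convex polynomials of degree $\le d$. Fix an arbitrary $\bar x\in K$. If $\bar x\in C$, Corollary~\ref{prop:2.0} applied to the full system $\{g_{i,j}\}$ gives $\varepsilon_{\bar x},c_{\bar x}>0$ with $\di(x,C)\le c_{\bar x}\big(\max_{i,j}[g_{i,j}(x)]_+\big)^{\tau}$ for $\|x-\bar x\|\le\varepsilon_{\bar x}$; then, shrinking $\varepsilon_{\bar x}$ so that $\mathbb{B}(\bar x,\varepsilon_{\bar x})$ lies in a fixed compact set and using that each $g_{i,j}$ is Lipschitz there, $[g_{i,j}(x)]_+\le L\,\di(x,C_i)$ (since $g_{i,j}$ vanishes on $C_i$, actually $[g_{i,j}(x)]_+ = [g_{i,j}(x)]_+ - [g_{i,j}(P_{C_i}x)]_+ \le L\|x-P_{C_i}x\| = L\,\di(x,C_i)$), so $\max_{i,j}[g_{i,j}(x)]_+\le L\sum_i\di(x,C_i)$ and the local bound $\di(x,C)\le c_{\bar x}L^\tau\big(\sum_i\di(x,C_i)\big)^{\tau}$ follows on $\mathbb{B}(\bar x,\varepsilon_{\bar x})$. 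If instead $\bar x\notin C$, then $\di(\bar x,C)>0$, and since $\sum_i\di(\bar x,C_i)\ge\di(\bar x,C_{i_0})>0$ for some $i_0$ with $\bar x\notin C_{i_0}$, continuity of both sides gives a ball $\mathbb{B}(\bar x,\varepsilon_{\bar x})$ on which $\di(x,C)$ is bounded above and $\sum_i\di(x,C_i)$ is bounded below by positive constants, so the desired inequality holds trivially there with an appropriate constant.

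Finally I would cover $K$ by finitely many such balls $\mathbb{B}(\bar x_1,\varepsilon_{\bar x_1}),\ldots,\mathbb{B}(\bar x_N,\varepsilon_{\bar x_N})$ using compactness, take $c:=\max$ of the finitely many constants obtained (after also absorbing the $L^\tau$ factors and a uniform bound coming from the fact that on a fixed compact set $\sum_i\di(x,C_i)$ is bounded, so one can freely pass between $\big(\sum_i\di(x,C_i)\big)^{\tau}$ with $\tau\le 1$ and smaller powers if needed), and conclude $\di(x,C)\le c\big(\sum_i\di(x,C_i)\big)^{\tau}$ for all $x\in K$. The main obstacle, and the only genuinely delicate point, is making the local-to-global constants uniform while keeping the exponent $\tau$ unchanged: one must be careful that Corollary~\ref{prop:2.0} is being applied to the system defining $C$ (not to the individual $C_i$) so that the exponent in the $\di(x,C)$ bound is exactly $\tau=1/\min\{(2d-1)^n+1)/2,\ \beta(n-1)d^n\}$, and that the Lipschitz/continuity estimates used to replace $[g_{i,j}]_+$ by $\di(\cdot,C_i)$ and to glue the finitely many balls only contribute multiplicative constants, never a change of exponent. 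Everything else is routine compactness bookkeeping.
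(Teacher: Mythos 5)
Your proposal is correct and follows essentially the same route as the paper: localize around each $\bar x\in K$, apply Corollary~\ref{prop:2.0} to the full system $\{g_{i,j}\}$ when $\bar x\in C$, convert the $[g_{i,j}]_+$ bound into a $\di(\cdot,C_i)$ bound via local Lipschitz continuity of the $g_{i,j}$ (noting $g_{i,j}(P_{C_i}x)\le 0$), handle $\bar x\notin C$ trivially, and glue by compactness. The only cosmetic differences are that you reduce to $\theta=1$ up front, whereas the paper keeps $\theta$ general, and you prove the Lipschitz comparison directly rather than by the contradiction argument the paper uses --- both rely on the identical estimate $|g_{i,j}(x)-g_{i,j}(P_{C_i}x)|\le L\,\di(x,C_i)$.
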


\begin{proof}
To see the conclusion, we only need to show that for each $\bar x \in \mathbb{R}^n$, there exist $c,\varepsilon>0$ such that
\begin{equation}\label{eq:sundi.yumen}
\di^{\theta} (x, C) \le c \bigg(\sum_{i=1}^m \di^{\theta}(x, C_i)\bigg)^\tau, \quad \textrm{for all } \|x-\bar x\| \le \varepsilon.
\end{equation}
Indeed, granting this and fixing a compact set $K$, then for any $\bar x \in K$ there exist $c_{\bar x},\varepsilon_{\bar x}>0$ such that
\[
\di^{\theta} (x, C) \le c_x\, \bigg(\sum_{i=1}^m  \di^{\theta}(x, C_i)\bigg)^\tau, \quad \textrm{for all } \|x-\bar x\| \le \varepsilon_x.
\]
As $K$ is compact and $\bigcup_{\bar x \in K}\mathbb{B}(\bar x;\varepsilon_{\bar x}) \supseteq K$, we can find finitely many points $\bar{x}_1,\cdots,\bar{x}_s \in K$, $s \in \mathbb{N}$, such that $\bigcup_{i=1}^s\mathbb{B}(\bar x_i;\varepsilon_{\bar x_i}) \supseteq K$. Let $c:=\max\{c_{\bar x_1},\cdots,c_{\bar x_s}\}$. Then, for any $x \in K$,
there exists $i_0 \in \{1,\cdots,s\}$ such that $x \in \mathbb{B}(\bar x_{i_0};\varepsilon_{\bar x_{i_0}})$, and hence
\[
 \di^{\theta}(x, C) \le c_{\bar x_{i_0}}\, \bigg(\sum_{i=1}^m  \di^{\theta}(x, C_i)\bigg)^\tau \le c \, \bigg(\sum_{i=1}^m \di^{\theta}(x, C_i)\bigg)^\tau.
\]

We now show \eqref{eq:sundi.yumen} holds. Fix
$\bar{x}\in\RR^n$. We consider two cases.

\emph{Case 1}: $\bar{x} \notin C$.

Then there exist $\varepsilon_1, \eta, M >0$ such that
\[
\sum_{i=1}^m  \di^{\theta}(x, C_i) \ge \eta  \mbox{ and }   \di^{\theta}(x,  C) \le M \quad \textrm{for all } \|x-\bar x\| \le \varepsilon_1.
\]
Therefore, $ \di^{\theta}(x, C) \le M = \frac{M}{\eta^{\tau}} \eta^{\tau} \le \frac{M}{\eta^{\tau}} \Big(\sum_{i=1}^m \di^{\theta}(x, C_i)\Big)^{\tau}$ for all $\|x-\bar x\| \le \varepsilon_1$ and hence, \eqref{eq:sundi.yumen} holds.

\emph{Case 2}: $\bar{x} \in C$.

We have
\begin{align*}
C=\Big\{x\in\RR^n\mid g_{i,1}(x)\leq0, g_{i,2}(x)\leq0,\cdots, g_{i,\gamma_i}(x)\leq 0,\quad i=1,\cdots,m\Big\}.
\end{align*}
By Corollary~\ref{prop:2.0},  there exist
positive constants $c_0$ and $\delta$ such that
\begin{align*}
\di(x, C)
& \leq c_0^{\frac{1}{\theta}} \bigg(\max_{1\leq i\leq m} \big\{[g_{i,1}(x)]_+,\cdots, [g_{i,\gamma_i}(x)]_+ \big\} \bigg)^{\tau},\quad \forall \|x - \bar{x} \| \le \delta.
\end{align*}
Hence
\begin{align}\label{eq:op}
\di^{\theta}(x, C)
& \leq c_0 \bigg(\max_{1\leq i\leq m} \big\{[g_{i,1}(x)]_+,\cdots, [g_{i,\gamma_i}(x)]_+ \big\} \bigg)^{\theta\tau},\quad \forall \|x - \bar{x} \| \le \delta.
\end{align}

Now we claim that there exists $\beta>0$ such that
\begin{align}\label{Sumclos:e1}
 \bigg(\max_{1\leq i\leq m} \big\{g_{i,1}(x)]_+,\cdots, [g_{i,\gamma_i}(x)]_+ \big\}\bigg)^{\theta} \leq \beta\sum_{i=1}^m \di^{\theta}(x, C_i),\quad \forall \|x - \bar{x} \| \le \delta.
\end{align}
Suppose to the contrary  that there exists  a sequence
$(x_k)_{k\in\NN}$ in $\mathbb{B}(\bar{x},\delta)$ such that
\begin{align}\label{Sumclos:e2}
\bigg(\max_{1\leq i\leq m} \big\{[g_{i,1}(x_{k})]_+,\cdots, [g_{i,\gamma_i}(x_{k})]_+ \big\}\bigg)^{\theta} > k\sum_{i=1}^m \di^{\theta}(x_k, C_i),\quad\forall k\in\NN.
\end{align}
Without loss of generality, we can assume that $g_{i,1},g_{i,2},\cdots g_{i,\gamma_i}$ have the Lipschitz constant $L>0$ on $\mathbb{B}(\bar{x},\delta)$ for every $i=1,2,\cdots,m$. Then, there exists  a subsequence
$(x_{k_l})_{l\in\NN}$ of $(x_k)_{k\in\NN}$, $1\leq i_0\leq m$
and $1\leq j_0\leq\gamma_{i_0}$ such that
\begin{align*}
\max_{1\leq i\leq m} \big\{[g_{i,1}(x_{k_l})]_+,\cdots, [g_{i,\gamma_i}(x_{k_l})]_+ \big\}=[g_{i_0,j_0}(x_{k_l})]_+,\quad\forall l\in\NN.
\end{align*}
It follows from \eqref{Sumclos:e2} that
\begin{align}\label{Sumclos:e3}
\big([g_{i_0,j_0}(x_{k_l})]_+\big)^{\theta}>k_l\sum_{i=1}^m \di^{\theta}(x_{k_l}, C_i),\quad\forall l\in\NN.
\end{align}
Then $[g_{i_0,j_0}(x_{k_l})]_+=g_{i_0,j_0}(x_{k_l})$ and hence for every $l\in\NN$,
\begin{align}\label{Sumclos:e4}
\big(g_{i_0,j_0}(x_{k_l})\big)^{\theta}>k_l\sum_{i=1}^m \di^{\theta}(x_{k_l}, C_i)=k_l\sum_{i=1}^m \big\|x_{k_l}-P_{i}(x_{k_l})\big\|^{\theta}\geq k_l\big\|x_{k_l}-P_{i_0}(x_{k_l})\big\|^{\theta}.
\end{align}
Since  $P_{i_0}(x_{k_l})\in C_{i_0}$, we have $g_{i_0,j_0}\big(P_{i_0}(x_{k_l})\big)\leq0$ and
$\|x_{k_l}-P_{i_0}(x_{k_l})\|\leq\|x_{k_l}-\bar{x}\|<\delta$
by $\bar{x}\in C_{i_0}$.
Combining this with \eqref{Sumclos:e4}, we have
\begin{align*}
L^{\theta}\big\|x_{k_l}-P_{i_0}(x_{k_l})\big\|^{\theta}\geq \Big(g_{i_0,j_0}(x_{k_l})-g_{i_0,j_0}\big(P_{i_0}(x_{k_l})\big)\Big)^{\theta}> k_l\big\|x_{k_l}-P_{i_0}(x_{k_l})\big\|^{\theta},\quad\forall l\in\NN.
\end{align*}
Hence we have $L^{\theta}>k_l$ for every $l\in\NN$, this contradicts the fact that $k_l\longrightarrow +\infty$.
Thus, \eqref{Sumclos:e1} holds.

Combining \eqref{Sumclos:e1} and \eqref{eq:op}, we see that
\begin{align*}
\di^{\theta}(x, C)
& \leq c_0\beta^{\tau} \Big(\sum_{i=1}^m \di^{\theta}(x, C_i)\Big)^{\tau},\quad \forall \|x - \bar{x} \| \le \delta,
\end{align*}
and so the conclusion follows. \qed
\end{proof}

\vspace{-0.1cm}
\setcounter{equation}{0}
\section{Convergence rate for the cyclic projection algorithm}\label{s:main}
\vspace{-0.1cm}
In this section, we derive explicit convergence rate  of the cyclic projection algorithm applied to finite intersections of basic semi-algebraic convex sets.

Before we come to our main result, we need the following useful lemma,
Lemma~\ref{recur}, which is a special case of  Alber and  Reich's result in  \cite{ARei1} . For the reader's convenience,  we provide a direct and self-contained proof.
\begin{lemma} [\bf Recurrence relationships]\label{recur} Let $p >0$, and let $\{\delta_k\}_{k=0}^{\infty}$ and $\{\beta_k\}_{k=0}^{\infty}$ be two sequences of nonnegative numbers satisfying the conditions
\[
\beta_{k+1} \le \beta_k(1-\delta_k \beta_k^{p})\;\mbox{ as }\;k=0,1,\ldots.
\]
Then, we have
\begin{equation} \label{eq:sundi.pangpang}
\beta_{k}\le\bigg(\beta_0^{-p}+\displaystyle p \sum_{i=0}^{k-1} \delta_i \bigg)^{-\frac{1}{p}}\;\mbox{ for all }\;k\in\mathbb N.
\end{equation}We use the convention that $\frac{1}{0}=+\infty$.
In particular, we have $\displaystyle\lim_{k\rightarrow\infty}\beta_k=0$ whenever $\displaystyle\sum_{k=0}^{\infty}\delta_k=\infty$.
\end{lemma}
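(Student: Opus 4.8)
The plan is to pass to the reciprocal powers $\beta_k^{-p}$ and show that they increase at least linearly, at a rate governed by the partial sums of the $\delta_i$; this is the discrete analogue of integrating the differential inequality $\dot\beta\le-\delta\beta^{p+1}$. First I would dispose of the degenerate cases. If $\beta_0=0$, then by the convention $\beta_0^{-p}=+\infty$ the right-hand side of \eqref{eq:sundi.pangpang} equals $0$, while $0\le\beta_{k+1}\le\beta_k(1-\delta_k\beta_k^{p})$ forces $\beta_k=0$ for all $k$, so the bound holds; more generally, if $\beta_{k_0}=0$ for some first index $k_0$, then $\beta_k=0$ for every $k\ge k_0$ and \eqref{eq:sundi.pangpang} is trivial there. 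Hence we may assume $\beta_k>0$ for all indices under consideration. In that case $\beta_{k+1}\ge0$ together with $\beta_k>0$ forces $1-\delta_k\beta_k^{p}\ge0$; moreover $\delta_k\beta_k^{p}=1$ would give $\beta_{k+1}=0$, already covered, so in fact $0\le\delta_k\beta_k^{p}<1$.

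The key step is the elementary inequality $(1-t)^{-p}\ge 1+pt$ for all $t\in[0,1)$ and $p>0$, which follows since $\varphi(t):=(1-t)^{-p}-1-pt$ satisfies $\varphi(0)=0$ and $\varphi'(t)=p\big[(1-t)^{-p-1}-1\big]\ge0$ on $[0,1)$. Applying this with $t=\delta_k\beta_k^{p}$ and raising the hypothesis $\beta_{k+1}\le\beta_k(1-\delta_k\beta_k^{p})$ to the power $-p$ (all quantities being positive), I obtain
\[
\beta_{k+1}^{-p}\;\ge\;\beta_k^{-p}\,(1-\delta_k\beta_k^{p})^{-p}\;\ge\;\beta_k^{-p}\,(1+p\,\delta_k\beta_k^{p})\;=\;\beta_k^{-p}+p\,\delta_k .
\]
A straightforward induction on $k$ then yields $\beta_k^{-p}\ge\beta_0^{-p}+p\sum_{i=0}^{k-1}\delta_i$, and applying the decreasing function $t\mapsto t^{-1/p}$ to both sides gives \eqref{eq:sundi.pangpang}.

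For the last assertion, if $\sum_{k=0}^{\infty}\delta_k=\infty$ then $\beta_0^{-p}+p\sum_{i=0}^{k-1}\delta_i\to\infty$, so the right-hand side of \eqref{eq:sundi.pangpang} tends to $0$, whence $\beta_k\to0$. The only real subtlety is the bookkeeping around the cases $\beta_k=0$ and $\delta_k\beta_k^{p}=1$ (and the conventions involving $+\infty$); once those are isolated, the argument reduces to the one-line convexity estimate above together with a telescoping induction.
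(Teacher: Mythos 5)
Your proof is correct. Both you and the paper reduce the problem to the telescoping estimate $\beta_{i+1}^{-p}-\beta_i^{-p}\ge p\,\delta_i$ and then sum, but you derive that estimate by a genuinely different elementary device. The paper writes $\delta_i\beta_i^{p+1}\le\beta_i-\beta_{i+1}$, introduces the nonincreasing function $h(x):=x^{-(p+1)}$, bounds $\delta_i\le(\beta_i-\beta_{i+1})h(\beta_i)\le\int_{\beta_{i+1}}^{\beta_i}h(x)\,dx$, and evaluates the integral; this is the ``integral comparison'' route, which is the discrete shadow of separating variables in $\dot\beta\le-\delta\beta^{p+1}$. You instead raise the recursion directly to the power $-p$ and invoke the Bernoulli-type inequality $(1-t)^{-p}\ge1+pt$ for $t\in[0,1)$, $p>0$, obtaining $\beta_{k+1}^{-p}\ge\beta_k^{-p}(1+p\,\delta_k\beta_k^p)=\beta_k^{-p}+p\,\delta_k$. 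Your route is purely algebraic, avoids introducing an auxiliary function and an integral, and makes the positivity and degenerate cases (including the observation that the hypothesis forces $\delta_k\beta_k^p\le1$) slightly more explicit; the paper's route is perhaps more transparent about the underlying ODE heuristic and generalizes more readily when the exponent structure is changed. Both are valid and comparably short.
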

\begin{proof} It follows from our assumption that
$$
0\le \beta_{i+1}\le\beta_{i}\le\cdots\le\beta_0\;\mbox{ and }\;\delta_i \beta_{i}^{p+1}\le\beta_{i}-\beta_{i+1}\;\mbox{ as }\;i\in\mathbb N.
$$

Fix $k\in\NN$.  We consider two cases.

\emph{Case 1}: $\beta_k=0$.

Clearly, \eqref{eq:sundi.pangpang} holds.

\emph{Case 1}: $\beta_k\neq 0$.

Thus $\beta_k>0$ and hence $\beta_i>0$ for every $i\leq k$.
Define the nonincreasing function $h:\RR_{++}\rightarrow\RX$ by   $h(x):=x^{-(p+1)}$. As $\delta_i \, h(\beta_{i})^{-1}=\delta_i \beta_{i}^{p+1} \le \beta_i-\beta_{i+1}$,
then we get
\[
\delta_i \le
(\beta_{i}-\beta_{i+1})h(\beta_{i})\le
\int_{\beta_{i+1}}^{\beta_i}h(x)dx=
\frac{\beta_{i+1}^{-p}-\beta_i^{-p}}{p}.
\]
This implies that
\begin{equation}\label{eq:sundi.haha}
\beta_{i+1}^{-p}-\beta_i^{-p}\ge p \delta_i \;\mbox{ for all }\;i\in\mathbb{N}\cup\{0\}.
\end{equation}

Now fix any $k\in \mathbb{N}$ and, summing (\ref{eq:sundi.haha}) from $i=0$ to $i=k-1$, we get
\[
\beta_{k}^{-p}-\beta_0^{-p}\ge p \, \sum_{i=0}^{k-1} \delta_i . 
\]
which implies the conclusion in (\ref{eq:sundi.pangpang}). \qed
\end{proof}

We also need the following technical result. The proof of it follows in part that of \cite[Lemmas~3\&4]{GPR}, one may also consult \cite{Heinz}.

\begin{proposition}[Cyclic convergence rate]\label{TheMTR:1}
Let $D_i\subseteq\RR^n$ be a closed convex set, $\forall i=1,\ldots,m$, and  $ D:=\bigcap_{i=1}^m D_i\neq\varnothing$. Let $x_0\in\RR^n$ and the sequence of cyclic projections, $(x_k)_{k\in\NN}$, be defined by
\begin{align*}
x_1:=P_1 x_0,\, x_2:=P_2 x_1,\,\cdots,\, x_m:=P_mx_{m-1},\, x_{m+1}:=P_1x_{m}\ldots,
\end{align*} where we set $P_i:=P_{D_i}$ for the convenience.
 Suppose that H\"{o}lderian regularity with exponent $\tau$ $(0<\tau \le 1)$  holds: for any compact set $K \subseteq \mathbb{R}^n$ and $\theta>0$, there exists $c_0>0$ such that
$$\di^{\theta} (x,D) \le c_0 \bigg(\sum_{i=1}^m \di^{\theta}(x, D_i)\bigg)^{\tau}, \quad \forall x \in K.$$
Then $x_k$ converges to  $x_{\infty}\in D$. Moreover, there exist $M>0$ and $r_0 \in \left]0,1\right[$ such that
\begin{eqnarray*}
\|x_{k}-x_{\infty}\| \leq
\left\{\begin{array}{ccc}
M \, k^{-\dfrac{1}{2\tau^{-1}-2}}& \mbox{ if } & \tau \in \left]0,1\right[, \\
M \, r_0^k & \mbox{ if } & \tau=1,
\end{array} \ \quad\forall k\in\NN,
\right.
\end{eqnarray*}

\end{proposition}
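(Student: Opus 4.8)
The plan is to follow the by-now classical scheme of Gubin--Polyak--Raik (as in \cite{GPR}), but replacing the linear-regularity hypothesis with the given H\"olderian one and feeding the resulting recursion into Lemma~\ref{recur}. First I would record that the sequence $(x_k)_{k\in\NN}$ is Fej\'er monotone with respect to $D$: since each $P_i$ is firmly nonexpansive and fixes every point of $D\subseteq D_i$, we have $\|x_{k+1}-a\|\le\|x_k-a\|$ for every $a\in D$, and moreover the ``quasi-Fej\'er'' refinement
\[
\|x_{k+1}-a\|^2\le\|x_k-a\|^2-\|x_{k+1}-x_k\|^2,\qquad\forall a\in D.
\]
In particular $(\|x_k-a\|)_k$ is nonincreasing and bounded, so $(x_k)$ stays in a compact set $K$, the increments $\|x_{k+1}-x_k\|\to0$, and (via Bregman's Fact~\ref{BregFms:1}, or directly) $x_k\to x_\infty\in D$. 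Set $a=x_\infty$ and abbreviate $\beta_k:=\|x_k-x_\infty\|$; it remains to produce the decay rate of $\beta_k$.

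The heart is to bound one full cycle. Consider a ``cycle start'' index; after the $m$ inner projections one gets, by telescoping the quasi-Fej\'er inequality over the cycle,
\[
\beta_{k+m}^2\le\beta_k^2-\sum_{j=0}^{m-1}\|x_{k+j+1}-x_{k+j}\|^2 .
\]
Next I would show the cycle's total squared step dominates $\di^2(x_k,D_i)$ for \emph{each} $i$ up to a constant: since $x_{k+i}=P_ix_{k+i-1}\in D_i$ and the increments telescope, $\di(x_k,D_i)\le\|x_k-x_{k+i}\|\le\sum_{j=0}^{i-1}\|x_{k+j+1}-x_{k+j}\|$, whence by Cauchy--Schwarz $\di^2(x_k,D_i)\le m\sum_{j=0}^{m-1}\|x_{k+j+1}-x_{k+j}\|^2$. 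Summing over $i$ and invoking the H\"olderian regularity hypothesis with $\theta=2$ on the compact set $K$, there is $c_0>0$ with
\[
\di^2(x_k,D)\le c_0\Big(\sum_{i=1}^m\di^2(x_k,D_i)\Big)^{\tau}\le c_0\,(m^2)^{\tau}\Big(\sum_{j=0}^{m-1}\|x_{k+j+1}-x_{k+j}\|^2\Big)^{\tau}.
\]
Also $\beta_k^2=\di^2(x_k,D)+(\beta_k^2-\di^2(x_k,D))$, but more usefully I note $\di(x_k,D)\le\beta_k$ trivially while the Fej\'er-to-$x_\infty$ estimate of Fact~\ref{FactPr:2} gives the reverse comparison $\beta_k\le 2\di(x_k,D)$; so $\beta_k$ and $\di(x_k,D)$ are comparable and I may work with either. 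Combining the two displays: with $\sigma_k:=\sum_{j=0}^{m-1}\|x_{k+j+1}-x_{k+j}\|^2$ we get $\di^2(x_k,D)\le C\sigma_k^{\tau}$, i.e. $\sigma_k\ge C^{-1/\tau}\di^{2/\tau}(x_k,D)\ge C'\beta_k^{2/\tau}$, and therefore
\[
\beta_{k+m}^2\le\beta_k^2-\sigma_k\le\beta_k^2-C'\beta_k^{2/\tau}
=\beta_k^2\Big(1-C'\beta_k^{2/\tau-2}\Big).
\]

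Now apply Lemma~\ref{recur} to the subsequence of cycle-start values $\hat\beta_j:=\beta_{jm}$ with $p:=\tfrac1\tau-1$: the hypothesis $\hat\beta_{j+1}\le\hat\beta_j(1-\delta_j\hat\beta_j^{\,p})$ holds (with constant $\delta_j\equiv\delta>0$ coming from $C'$, after squaring is handled by noting $\sqrt{1-t}\le1-t/2$). When $\tau\in\left]0,1\right[$ we have $p>0$, so Lemma~\ref{recur} yields $\hat\beta_j\le(\hat\beta_0^{-p}+p\delta j)^{-1/p}=O(j^{-1/p})=O(j^{-1/(\tau^{-1}-1)})$; since $j\approx k/m$, this is $O(k^{-1/(\tau^{-1}-1)})$. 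To match the stated exponent $\tfrac1{2\tau^{-1}-2}=\tfrac{1}{2}\cdot\tfrac1{\tau^{-1}-1}$ one should instead run the argument on $\beta_k$ (not $\beta_k^2$) against $\beta_k^{1/\tau}$ type decay, i.e. use the weaker but correctly-scaled bound $\beta_{k+m}\le\beta_k(1-\delta\beta_k^{2\tau^{-1}-2})$ obtained by not cancelling a square; then Lemma~\ref{recur} with $p=2\tau^{-1}-2$ gives exactly $\beta_{jm}=O(j^{-1/(2\tau^{-1}-2)})$. Finally, using Fej\'er monotonicity to fill in the non-cycle-start indices $\beta_{jm+r}\le\beta_{jm}$ ($0\le r<m$) upgrades the estimate from the subsequence to all $k$, giving $\|x_k-x_\infty\|\le M k^{-1/(2\tau^{-1}-2)}$. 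When $\tau=1$, $p=0$ and instead one directly gets $\beta_{k+m}\le(1-\delta)\beta_k$, hence geometric decay $\beta_k\le M r_0^k$ with $r_0:=(1-\delta)^{1/m}\in\left]0,1\right[$.

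\textbf{Main obstacle.} The genuinely delicate point is the per-cycle ``progress'' estimate: one must lower-bound the sum of squared increments in a cycle by a power of $\di(x_k,D)$, and the only available route is to pass through $\sum_i\di^2(x_k,D_i)$ and the H\"olderian regularity inequality — this requires that the compact set $K$ containing the whole orbit be fixed \emph{in advance} (legitimate here because Fej\'er monotonicity confines the orbit to a ball around any chosen $a\in D$), and care with the exponents so the final rate is $k^{-1/(2\tau^{-1}-2)}$ rather than its square. The bookkeeping with $\theta=2$ versus $\theta=1$ in the regularity hypothesis, and keeping track of the constant $m$ (and $m^{2\tau}$ etc.) through Cauchy--Schwarz, is routine but must be done consistently.
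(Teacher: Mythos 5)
Your proposal is correct and follows essentially the same route as the paper's proof: telescope a quasi-Fej\'er inequality over a full cycle, bound $\di(x_k,D_i)$ by the sum of increments over that cycle (Cauchy--Schwarz gives the needed $m$-factor), feed this into the H\"olderian regularity hypothesis with $\theta=2$, and invoke Lemma~\ref{recur} to extract the polynomial rate when $\tau<1$ and the geometric rate when $\tau=1$. The only cosmetic difference is that you run the recursion on $\|x_k-x_\infty\|^2$ and then pass to $\di(x_k,D)$ via the factor-of-$2$ Fej\'er bound, whereas the paper works with $\di^2(x_k,D)$ directly (measuring decrease against $P_Dx_k$ rather than against the fixed anchor $x_\infty$); also note your first pass used $p=\tau^{-1}-1$ with $\hat\beta_j=\beta_{jm}$ unsquared, which gives the wrong exponent, but you caught this and the corrected version (either $p=2\tau^{-1}-2$ on $\beta_{jm}$, or equivalently $p=\tau^{-1}-1$ on $\beta_{jm}^2$ as in the paper) is right.
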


\begin{proof}
We denoted by  $\alpha_i:=(i\mod m)+1,\forall i\in\NN$.
Thus $x_{k+1}=P_{D_{\alpha_k}}x_k$. By Fact~\ref{BregFms:1},
there exists $x_{\infty}\in D$ such that $x_{k}\longrightarrow x_{\infty}$.

We first follow closely the proofs of \cite[Lemmas~3\&4]{GPR} to get that
\begin{align}
\di^2(x_{k}, D)-\di^2(x_{k+1}, D)&\geq\di^2(x_{k}, D_{\alpha_k}),\quad\forall
k\in\NN.
\label{TheMTR:E1}
\end{align}
Indeed, using the definition of projection operator, we have
\begin{align*}
&\di^2(x_{k}, D)-\di^2(x_{k+1}, D)\geq\|x_{k}-P_D x_k\|^2-\|x_{k+1}-P_D x_{k}\|^2\\
&=\|x_{k}-P_D x_k\|^2-\|P_{\alpha_k}x_{k}-P_D x_{k}\|^2=\|x_{k}-P_D x_k\|^2-\big\|P_{\alpha_k}x_{k}-x_k+x_k-P_D x_{k}\big\|^2\\
&=\|x_{k}-P_D x_k\|^2-\|P_{\alpha_k}x_{k}-x_k\|^2-\|x_{k}-P_D x_k\|^2
+2\big\langle x_k-P_{\alpha_k}x_{k},x_k-P_D x_{k}\big\rangle\\
&\geq-\|x_k-P_{\alpha_k} x_{k}\|^2
+2\big\langle x_k-P_{\alpha_k}x_{k},x_k-P_{\alpha_k}x_k+P_{\alpha_k}x_k-P_D x_{k}\big\rangle\\
&=-\|x_k-P_{\alpha_k} x_{k}\|^2+2\|x_k-P_{\alpha_k}x_k\|^2
+2\big\langle x_k-P_{\alpha_k}x_{k},P_{\alpha_k}x_k-P_D x_{k}\big\rangle\\
&\geq\|x_k-P_{\alpha_k}x_k\|^2=\di^2(x_k, D_{\alpha_k}),\quad\forall k\in\NN.
\end{align*}
Hence \eqref{TheMTR:E1} holds.

Next we claim that for every $i\in\NN$
\begin{align}
\di(x_{k}, D_{\alpha_i})\leq\di(x_{k}, D_{\alpha_k})+\di(x_{k+1}, D_{\alpha_{k+1}})+\ldots+\di(x_{k+m-1}, D_{\alpha_{k+m-1}}).\label{TheMTR:Ea1}
\end{align}
To see this, note that there exists $i_0\leq m-1$ such that $\alpha_{i_0+k}=\alpha_{i}$. Then, we have
\begin{align*}
&\di(x_{k}, D_{\alpha_i})=\di(x_{k}, D_{\alpha_{i_0+k}})
\leq\|x_k-x_{i_0+k}\|+\big\|x_{i_0+k}-P_{\alpha_{i_0+k}}x_{i_0+k}\big\|\\
&\leq\|x_k-x_{k+1}\|+\cdots+\|x_{i_0+k-1}-x_{i_0+k}\|+\big\|x_{i_0+k}-P_{\alpha_{i_0+k}}x_{i_0+k}\big\|\\
&=\|x_k-P_{\alpha_k}x_{k}\|+\cdots+\|x_{i_0+k-1}-P_{\alpha_{i_0+k-1}}x_{i_0+k-1}\|
+\big\|x_{i_0+k}-P_{\alpha_{i_0+k}}x_{i_0+k}\big\|\\
&=\di(x_k, D_{\alpha_k})+\di(x_{k+1}, D_{\alpha_{k+1}})+\cdots+\di(x_{i_0+k}, D_{\alpha_{i_0+k}})\\
&\leq\di(x_k, D_{\alpha_k})+\di(x_{k+1}, D_{\alpha_{k+1}})+\cdots+\di(x_{k+m-1}, D_{\alpha_{k+m-1}})\quad\text{(by $i_0\leq m-1$)}.
\end{align*}
Hence \eqref{TheMTR:Ea1} holds.

Thus by \eqref{TheMTR:Ea1},
\begin{align}
&\di^2(x_{k}, D_{\alpha_i})\leq \Big(m \max_{k\leq i\leq k+m-1}\di(x_{i}, D_{\alpha_i})\Big)^2\nonumber\\
&\leq m^2\Big(\di^2(x_{k}, D_{\alpha_k})+\di^2(x_{k+1}, D_{\alpha_{k+1}})+\ldots+\di^2(x_{k+m-1}, D_{\alpha_{k+m-1}})\Big).\label{TheMTR:Ec1}
\end{align}

By the assumption, there exists $c_0>0$ such that
\begin{align*}
\di^2(x_k, D) \le c_0 \Big(\sum_{i=1}^m\di^2(x_k,D_i)\Big)^{\tau},\quad\forall k\in\NN.
\end{align*}
By enlarging $c_0$ if necessary, we may assume that $c_0>1$.

Let $r:=\tau^{-1}$. Then by \eqref{TheMTR:Ec1}, for every $k\in\NN$,
\begin{align}
&\frac{1}{c_0^{r}}\di^{2r}(x_k, D)\nonumber\\
&\leq  \sum_{i=1}^m\di^2(x_k,D_i)\leq m \max_{1\leq i\leq m} \di^2(x_k,D_i)\nonumber\\
&\leq m^3\Big(\di^2(x_{k}, D_{\alpha_k})+\di^2(x_{k+1}, D_{\alpha_{k+1}})+\ldots+\di^2(x_{k+m-1}, D_{\alpha_{k+m-1}})\Big)\nonumber\\
&\leq m^3\sum_{i=k}^{k+m-1}\di^2(x_{i}, D)-\di^2(x_{i+1}, D)
\quad\text{(by \eqref{TheMTR:E1})}\nonumber\\
&=m^3\Big(\di^2(x_{k}, D)-\di^2(x_{k+m}, D)\Big).\label{TheMTR:E2}
\end{align}
Thus we have
\begin{align}
&\di^2(x_{k+m}, D)\leq
\di^2(x_{k}, D)-\frac{1}{m^3c_0^{r}}\di^{2r}(x_k, D).\label{TheMTR:E3}
\end{align}
Now we consider two cases.

\emph{Case 1}: $\tau \in \left]0,1\right[$.

Thus $r>1$.
 Fix $k_0\in\NN$.
Let $\beta_i:=\di^2(x_{k_0+im}, D)$, $\forall i \in \NN\cup\{0\}$.
Then \eqref{TheMTR:E3} shows that
\begin{align}
\beta_{i+1} \leq \beta_i-\frac{1}{m^3c_0^{r}}\beta_i^{r}
 =\beta_i(1-\frac{1}{m^3c_0^{r}}\beta_i^{r-1}).
\end{align}
By Lemma~\ref{recur},
\begin{align*}
\di^2(x_{k_0+im}, D)&= \beta_{i}\le\bigg(\beta_0^{1-r}+(r-1)i\frac{1}{m^3c_0^r} \bigg)^{-\frac{1}{r-1}},\quad \forall i\in\NN.
\end{align*}
Thus, there exists $M_0>0$ such that
\begin{align*}
\di(x_{k_0+im}, D)\leq M_0\frac{1}{\sqrt[2(r-1)]{i}},\quad \forall i\in\NN.
\end{align*}
Hence
there exists $M_1>0$ such that
\begin{align}
\di(x_{k}, D)\leq M_1\frac{1}{\sqrt[2(r-1)]{k}}=M_1 k^{-\dfrac{1}{2\tau^{-1}-2}},\quad\forall k\in\NN.\label{TheMTR:E4a}
\end{align}
So, we have
\begin{align}
\|x_k-x_{\infty}\|\leq\|x_k-P_{D}(x_k)\|+\|P_{D}(x_k)-x_{\infty}\|=
\di(x_{k}, D)+\|P_{D}(x_k)-x_{\infty}\|.\label{TheMTR:E5}
\end{align}
By \cite[Lemma~3]{GPR},
\begin{align*}
\|x_{k+l}-P_{D}(x_k)\|\leq \|x_{k}-P_{D}(x_k)\|=\di(x_{k}, D),\quad\forall l\in\NN.
\end{align*}
Letting $l\longrightarrow \infty$ in the above inequality, we obtain that
\begin{align}
\|x_{\infty}-P_{D}(x_k)\|\leq \di(x_{k}, D).\label{TheMTR:E6}
\end{align}
Combining \eqref{TheMTR:E5}, \eqref{TheMTR:E6} and \eqref{TheMTR:E4a},
\begin{align}
\|x_k-x_{\infty}\|\leq 2\di(x_{k}, D)\leq 2M_1k^{-\dfrac{1}{2\tau^{-1}-2}},\quad\forall k\in\NN.
\end{align}
Thus, the conclusion follows by letting $M:=2M_1$.

\emph{Case 2}: $\tau=1$.

Then we have $r=\tau^{-1}=1$, and so, (\ref{TheMTR:E3}) implies that for all $k \in \NN$
\[
\di(x_{k+m}, D)\leq
\sqrt{1-\frac{1}{m^3c_0^{r}}}\, \di(x_k, D).
\]
Hence
there exist $M_1'>0$ and $r_0 \in \left]0,1\right[$ such that
$\di(x_{k}, D)\leq M_1' r_0^k,\quad\forall k\in\NN.$
Then, using a similar method of proof as in Case 1, we obtain that
\begin{align}
\|x_k-x_{\infty}\|\leq 2\di(x_{k}, D)\leq 2M_1' r_0^k ,\quad\forall k\in\NN.
\end{align}
Thus, the conclusion follows by letting $M:=2M_1'$.
\qed
\end{proof}
\begin{remark}{\bf (Connection to the existing result on linear convergence)}
In the case where there exists $i_0 \in \{1,\ldots,m\}$ such that  $D_{i_0} \cap {\rm int}\, \big(\bigcap_{i \neq i_0}D_i\big) \neq \emptyset$ (in this case, we say the intersection is regular), then the H\"{o}lderian regularity result holds with exponent
$\tau=1$. So, the preceding proposition implies that the cyclic projection algorithm converges linearly in the regular intersection case.
Thus, this recovers the linear convergence result for cyclic projection algorithm established in \cite{GPR}.\qede
  \end{remark}

We are now ready for one of our main results.

\begin{theorem}[Estimate of the cyclic convergence rate]\label{TheMTR:3}
  Let $x_0\in\RR^n$ and the sequence of cyclic projections, $(x_k)_{k\in\NN}$, be defined by
\begin{align*}
x_1:=P_1 x_0,\, x_2:=P_2 x_1,\,\cdots,\, x_m:=P_mx_{m-1},\, x_{m+1}:=P_1x_{m}\ldots.
\end{align*}
Then $x_k$ converges to  $x_{\infty}\in C$, and there exist $M>0$ and $r_0\in\left]0,1\right[$ such that
\begin{align*}
\|x_{k}-x_{\infty}\|\leq \begin{cases}M\frac{1}{k^{\rho}},\,&\text{if}\,\, d>1;\\
M r^k_0,\,&\text{if}\,\, d=1
\end{cases},\quad\forall k\in\NN,
\end{align*}
where $\rho:=\frac{1}{\min\big\{(2d-1)^n-1,\, 2\beta(n-1)d^n-2\big\}}$ and $\beta(n-1)$ is the central binomial coefficient with respect to $n-1$ which is given by ${n-1 \choose {[(n-1)/2] }}$.
\end{theorem}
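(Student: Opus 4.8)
The plan is to combine the two main ingredients that have already been assembled in the paper: the H\"olderian regularity result for finitely many basic semi-algebraic convex sets (Theorem~\ref{ThesumSet:1}) and the abstract convergence-rate result for the cyclic projection algorithm under H\"olderian regularity (Proposition~\ref{TheMTR:1}). The theorem to be proved is essentially a corollary obtained by feeding the former into the latter.

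\textbf{Step 1: Identify the regularity exponent.} First I would apply Theorem~\ref{ThesumSet:1} with the sets $C_i=\{x\in\RR^n\mid g_{i,j}(x)\le 0,\ j=1,\ldots,\gamma_i\}$, whose generating polynomials have degree at most $d$. For every compact $K\subseteq\RR^n$ and every $\theta>0$ this yields a constant $c>0$ with
\[
\di^{\theta}(x,C)\le c\Big(\sum_{i=1}^m\di^{\theta}(x,C_i)\Big)^{\tau},\qquad \forall x\in K,
\]
where $\tau=\frac{1}{\min\{\frac{(2d-1)^n+1}{2},\,\beta(n-1)d^n\}}$. Thus H\"olderian regularity with this exponent $\tau$ holds; note $0<\tau\le1$ (one checks $\tau=1$ exactly when $d=1$, since then $\frac{(2d-1)^n+1}{2}=1$ and $\beta(n-1)d^n\ge1$, whereas for $d>1$ both quantities exceed $1$).

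\textbf{Step 2: Invoke the abstract convergence rate.} Apply Proposition~\ref{TheMTR:1} with $D_i:=C_i$ (each closed convex, with $D=\bigcap_i C_i=C\neq\varnothing$ by the standing assumptions). It gives $x_k\to x_\infty\in C$ and constants $M>0$, $r_0\in(0,1)$ with $\|x_k-x_\infty\|\le M\,k^{-1/(2\tau^{-1}-2)}$ when $\tau\in(0,1)$ and $\|x_k-x_\infty\|\le M r_0^k$ when $\tau=1$. For $d=1$ we have $\tau=1$, giving the linear rate. For $d>1$ it remains to simplify the exponent: $2\tau^{-1}-2 = 2\min\{\frac{(2d-1)^n+1}{2},\,\beta(n-1)d^n\}-2 = \min\{(2d-1)^n-1,\ 2\beta(n-1)d^n-2\}$, which is exactly $\rho^{-1}$; hence $\|x_k-x_\infty\|\le M k^{-\rho}$.

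\textbf{Main obstacle.} There is essentially no obstacle here: the real work was done in establishing Theorem~\ref{ThesumSet:1} (the local error bound of Theorem~\ref{Tprop:2.0} plus the compactness/covering argument) and Proposition~\ref{TheMTR:1} (the Fej\'er-monotonicity estimates and the recurrence Lemma~\ref{recur}). The only point requiring a moment's care is the bookkeeping that matches the exponent coming out of Proposition~\ref{TheMTR:1} with the stated $\rho$, together with verifying that Proposition~\ref{TheMTR:1}'s hypothesis ``for \emph{any} compact $K$ and \emph{any} $\theta>0$'' is genuinely supplied by Theorem~\ref{ThesumSet:1} (it is, since that theorem is stated for arbitrary $\theta>0$ and arbitrary compact $K$); one applies it with $\theta=2$, which is the value used inside the proof of Proposition~\ref{TheMTR:1}. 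This completes the proof.
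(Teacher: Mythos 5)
Your proposal is correct and follows exactly the paper's own route: apply Theorem~\ref{ThesumSet:1} to obtain H\"olderian regularity with exponent $\tau=\frac{1}{\min\{\frac{(2d-1)^n+1}{2},\,\beta(n-1)d^n\}}$, feed this into Proposition~\ref{TheMTR:1}, and simplify $2\tau^{-1}-2$ to $\rho^{-1}$. The extra remarks you include (checking $0<\tau\le1$, noting $\tau=1$ precisely when $d=1$, and confirming the hypothesis of Proposition~\ref{TheMTR:1} is met for $\theta=2$) are sound bookkeeping that the paper leaves implicit.
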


\begin{proof}
Combining Theorem \ref{ThesumSet:1} and Proposition~\ref{TheMTR:1}, we directly obtain $\tau:=\frac{1}{\min\big\{\frac{(2d-1)^n+1}{2},\, \beta(n-1)d^n\big\}}$. Note that $2\tau^{-1}-2=\min\big\{(2d-1)^n-1,\, 2\beta(n-1)d^n-2\big\}$. Thus the conclusion follows from the preceding proposition. \qed
\end{proof}

\begin{remark}{\bf (Discussion on our estimation of the convergence rate)}
Although our estimate of the convergence rate works for cyclic projection algorithm with finitely many basic semialgebraic convex sets without any regularity condition, the estimated convergence rate is quite poor when the dimension $n$ of the underlying space and the maximal degree $d$ are large. This is mainly due to the fact that the estimated convergence rate is derived by using the local error bound result for general convex polynomial systems. It would be interesting to see how one could improve the estimation of the convergence rate by either adopting other approaches or by further exploiting the structure of the underlying convex sets. For example, one possibility would be to examine problems involving some  suitable additional curvature or uniform
convexity assumptions. This will be another of our future research topics.
\end{remark}

\subsection{Alternating projection algorithm}
In this subsection, we discuss the convergence rate of the alternating projection algorithm. We assume throughout this subsection that
 \begin{equation*}
   \addtolength{\fboxsep}{5pt}
   \boxed{
   \begin{alignedat}{4}
   &g_i,h_j\,\text{
are convex polynomials with degree at most $d$},\,  \forall i=1,2,\cdots,m,\, j=1,2,\cdots,l\\
     & A:=\{x  \in \mathbb{R}^n\mid g_i(x) \le 0, i=1,\cdots,m\}\\
&B:=\{x \in \mathbb{R}^n\mid h_j(x) \le 0, j=1,\cdots,l\}\\
&b_0\in\RR^n,\quad a_{k+1}:=P_A b_k, \quad b_{k+1}:=P_Ba_{k+1}.
\end{alignedat}
   }
\end{equation*}

As an immediate corollary of Theorem \ref{TheMTR:3}, we first obtain the following estimate on the convergence rate of the alternating projection algorithm in the case where the two sets have nonempty intersection.
\vspace{-0.1cm}
\subsection*{The case of two sets with nonempty intersection}\label{subsecf}
\begin{corollary}[Alternating convergence rate]\label{ATMCoR:1}
Suppose that $A \cap B \neq \emptyset$. Let the sequence $\{(a_k,b_k)\}$ be generated by the alternating projection algorithm. Then, $a_k,b_k \longrightarrow c \in A \cap B$. Moreover, there exist  $M>0$ and $r_0\in\left]0,1\right[$ such that for every $k\in\NN$
\begin{align*}
\|a_k-c\|\leq  \begin{cases}M\frac{1}{k^{\rho}},\,&\text{if}\,\, d>1;\\
M r^k_0,\,&\text{if}\,\, d=1
\end{cases}, \quad&\mbox{ and } \quad\|b_k-c\|\leq  \begin{cases}M\frac{1}{k^{\rho}},\,&\text{if}\,\, d>1;\\
M r^k_0,\,&\text{if}\,\, d=1
\end{cases},
\end{align*}
where $\rho:=\frac{1}{\min\big\{(2d-1)^n-1,\, 2\beta(n-1)d^n-2\big\}}$ and $\beta(n-1)$ is the central binomial coefficient with respect to $n-1$ which is given by ${n-1 \choose {[(n-1)/2] }}$.
\end{corollary}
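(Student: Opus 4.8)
The plan is to observe that the alternating projection algorithm is precisely the $m=2$ instance of the cyclic projection algorithm, so that the result follows from Theorem~\ref{TheMTR:3} with essentially no further work.

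First I would place the problem inside the standing framework of Section~\ref{s:aux} by taking $C_1:=A$ and $C_2:=B$. By the boxed hypotheses of this subsection, $A$ and $B$ are basic semi-algebraic convex sets whose generating convex polynomials $g_1,\dots,g_m$ and $h_1,\dots,h_l$ have degree at most $d$, and, since $A\cap B\neq\varnothing$, the set $C:=C_1\cap C_2=A\cap B$ is nonempty. Hence all the hypotheses needed to invoke Theorem~\ref{TheMTR:3} for $C_1,C_2$ are in force, with $P_{C_1}=P_A$ and $P_{C_2}=P_B$.

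Next I would identify the iterates. Running the cyclic projection sequence \eqref{Cycse:1} from the starting point $x_0:=b_0$ gives $x_1=P_{C_1}x_0=P_Ab_0=a_1$, $x_2=P_{C_2}x_1=P_Ba_1=b_1$, and, by an obvious induction, $x_{2k-1}=a_k$ and $x_{2k}=b_k$ for every $k\in\NN$. Theorem~\ref{TheMTR:3} therefore produces a limit $x_\infty\in C=A\cap B$ of $(x_k)_{k\in\NN}$; setting $c:=x_\infty$, we conclude $a_k\to c$ and $b_k\to c$. The same theorem supplies $M_0>0$ and $r_0\in\left]0,1\right[$ with $\|x_k-x_\infty\|\le M_0\,k^{-\rho}$ when $d>1$ and $\|x_k-x_\infty\|\le M_0\,r_0^{\,k}$ when $d=1$, where $\rho=\big(\min\{(2d-1)^n-1,\,2\beta(n-1)d^n-2\}\big)^{-1}$ as in the statement.

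It then remains to transfer the estimate along the two subsequences $a_k=x_{2k-1}$ and $b_k=x_{2k}$. When $d>1$, $\|a_k-c\|\le M_0(2k-1)^{-\rho}\le M_0\,k^{-\rho}$ and $\|b_k-c\|\le M_0(2k)^{-\rho}\le M_0\,k^{-\rho}$, so the claim holds with $M:=M_0$. When $d=1$, $\|a_k-c\|\le M_0\,r_0^{\,2k-1}\le (M_0/r_0)\,r_0^{\,k}$ and $\|b_k-c\|\le M_0\,r_0^{\,2k}\le M_0\,r_0^{\,k}$, so the claim holds with $M:=M_0/r_0$ and the same $r_0\in\left]0,1\right[$. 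There is no genuine obstacle in this argument: the only things to watch are the bookkeeping identification of the two schemes and the harmless re-indexing and constant adjustments above; all the substance is contained in Theorem~\ref{TheMTR:3}, hence in the H\"{o}lderian regularity result Theorem~\ref{ThesumSet:1} and the abstract rate estimate Proposition~\ref{TheMTR:1} on which it is built.
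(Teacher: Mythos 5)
Your argument is correct and is precisely the approach the paper intends: the paper presents this as an ``immediate corollary'' of Theorem~\ref{TheMTR:3} and omits the details, which you have filled in accurately (the identification $x_{2k-1}=a_k$, $x_{2k}=b_k$ when the cyclic iteration for $m=2$ is started from $x_0=b_0$, followed by the harmless re-indexing estimates $(2k-1)^{-\rho}\le k^{-\rho}$ and $r_0^{2k-1}\le r_0^{-1}r_0^{k}$). No gap; your proof is what the corollary's label ``immediate'' is pointing at.
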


Recently, \cite{Attouch1} established a local convergence rate analysis for proximal alternating projection methods for
very general nonconvex problems, and where the corresponding convergence rate involves the exponent of the Kurdyka-$\L$ojasiewicz inequality. The \emph{proximal alternating projection method} is a variant of the alternating projection algorithm we discussed here. On the other hand, as we discussed before, in general, the actual exponent of the Kurdyka-$\L$ojasiewicz inequality is typically unknown and hard to estimate. Corollary \ref{ATMCoR:1} above complements the result of \cite{Attouch1} in the case of basic convex semialgebraic cases by providing an explicit estimate of the convergence rate.

\vspace{-0.1cm}
\subsection*{The case of two sets with empty intersection}\label{subsecf}
In this part, we  consider the general case where the intersection of these two sets
 is (possibly) empty. We first need the following lemma.
 
\begin{lemma}\label{LemmClos:1}
 The difference  $B-A$ of  two basic semi-algebraic sets $A,B$ is closed.
\end{lemma}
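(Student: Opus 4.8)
The plan is to reduce the closedness of $B-A$ to a statement about sets of the form $\{x \in \mathbb{R}^n \mid g_i(x) \le 0\}$ with $g_i$ convex polynomials, and then to invoke the Frank--Wolfe type result for convex polynomials (Fact~\ref{lemma:2.4}) together with the directional-constancy property (Fact~\ref{Factinft:1}). The starting observation is that $B-A$ is itself a convex set (difference of convex sets), so to show it is closed it suffices to show that whenever $z_k = b_k - a_k \to z$ with $b_k \in B$, $a_k \in A$, there exist $b \in B$, $a \in A$ with $b - a = z$. Equivalently, writing $w_k:=a_k$, we must find a point $a \in A$ with $a + z \in B$; that is, the set $A \cap (B - z)$ must be nonempty. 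The set $A \cap (B-z)$ is described by the convex polynomial inequalities $g_i(x) \le 0$ ($i=1,\ldots,m$) and $h_j(x+z) \le 0$ ($j=1,\ldots,l$), so it is a basic semi-algebraic convex set. We know $\di(a_k, A \cap (B-z)) \to 0$-ish information is \emph{not} directly available, so instead the plan is to argue nonemptiness via a minimization.

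The key step is to consider the optimization problem of minimizing an appropriate convex polynomial over a basic semi-algebraic convex feasible set, and to show the infimum is finite and hence attained by Fact~\ref{lemma:2.4}. Concretely, consider $F(x,y):=\|x - y - z\|^2$ (or a suitable linear/quadratic surrogate) to be minimized over $(x,y) \in A \times B$; along the sequence $(a_k,b_k)$ the value $\|a_k - b_k - z\|^2 = \|z_k - z\|^2 \to 0$, so the infimum is $0$, which is finite. Since $F$ is a convex polynomial and $A \times B$ is cut out by convex polynomials, Fact~\ref{lemma:2.4} guarantees the minimum is attained at some $(a,b) \in A \times B$, giving $\|a - b - z\| = 0$, i.e. $b - a = z \in B - A$. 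This shows $B-A$ is closed. I would also double-check the degenerate cases $A=\varnothing$ or $B=\varnothing$ (then $B-A=\varnothing$, trivially closed) and note that convexity of $F$ and of the constraint polynomials is exactly what licenses the use of Fact~\ref{lemma:2.4}.

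The main obstacle I anticipate is purely bookkeeping: one must phrase the problem so that the Frank--Wolfe/Belousov hypothesis ``$\inf_{x \in D} f(x) > -\infty$'' is literally satisfied, which forces the objective to be bounded below (hence the choice of a squared-distance objective rather than a linear one), and one must make sure $A \times B \subseteq \mathbb{R}^{2n}$ is genuinely a basic semi-algebraic convex set in the right ambient dimension (lifting $g_i$ to depend only on the first block of coordinates and $h_j$ only on the second, possibly after the translation by $z$). A subtlety worth flagging is that $z$ must be a fixed real vector before we set up the minimization — that is fine since we assumed $z_k \to z$, so $z$ is determined — and that the translated polynomials $x \mapsto h_j(x+z)$ remain convex polynomials of the same degree, so Fact~\ref{lemma:2.4} still applies. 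Once these identifications are made, the argument is a one-line appeal to Belousov's theorem; the real content is recognizing that convexity is essential (Example~\ref{remark:1} already illustrates that without convexity the analogous closedness/attainment can fail).
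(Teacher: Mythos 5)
Your proposal takes essentially the same route as the paper: formulate the convex polynomial optimization problem of minimizing the squared-distance objective over the basic semi-algebraic convex set $A\times B\subseteq\mathbb{R}^{2n}$, note that the feasible sequence $(a_k,b_k)$ drives the objective to $0$ so the infimum is finite, and invoke Belousov's Frank--Wolfe theorem (Fact~\ref{lemma:2.4}) to get attainment, which produces the desired representative of $z$ in $B-A$. The only slip is a sign error: with $z_k:=b_k-a_k$ one has $\|a_k-b_k-z\|^2=\|z_k+z\|^2\not\to 0$ in general, so the objective should be $\|(y-x)-z\|^2$ (as in the paper), after which everything you wrote goes through unchanged.
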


\begin{proof}
Let $b_k \in B$ and $a_k \in A$ be such that $b_k-a_k \longrightarrow c$. We now show that $c \in B-A$. Consider  the following convex polynomial optimization problem
\begin{eqnarray*}
(P) & \min_{x \in \mathbb{R}^n, y \in \mathbb{R}^n} & \|(y-x)-c\|^2 \\
& \mbox{ s.t. } & g_i(x) \le 0, i=1,\cdots,m, \\
& & h_j(y) \le 0, j=1,\cdots,l.
\end{eqnarray*}
Note that $(a_k,b_k)$ are feasible for (P). Hence we see that $\inf(P)=0$. By Fact~\ref{lemma:2.4},  the optimal
solution of (P) exists. Thus there exists $x \in A$ and $y \in B$ such that $c=y-x \in B -A$. Hence the conclusion follows. \qed
\end{proof}

\begin{remark} \label{cor:attain} With A and B defined as above,
Fact~\ref{lemma:2.4} implies that $B-A$ is closed convex. Hence $P_{B-A}0\neq\varnothing$.
Let $v:=P_{B-A}0$. Then, there exist $a \in A$ and $b \in B$ such that
$v=b-a$ and hence  $\di(A,B)=\|v\|$. \qede
\end{remark}

\begin{remark}
In  general,  the distance between two convex and semi-algebraic sets need not be attained.
For instance, consider  $D:=\{(x_1,x_2)\in\RR^2\mid x_1x_2 \ge 1, x_1 \ge 0, x_2 \ge 0\}$ and $E:=\{(x_1,x_2) \in \mathbb{R}^2\mid x_1=0\}$. It is clear that $D,E$ are both convex and semialgebraic; while $D$ is not a basic semi-algebraic convex set (as explained in Example~\ref{remark:1}).
Clearly, $\di(D,E)=0$ but $D \cap E=\emptyset$. Thus, the distance is not attained in this case. \qede
\end{remark}

The proof of Theorem~\ref{TheAPM:1} partially follows that of \cite[Theorem~3.12]{Heinz_Set}.

\begin{theorem}{\bf (Convergence rate in the infeasible case)}\label{TheAPM:1} Let the sequence $\{(a_k,b_k)\}$ be generated by the alternating projection algorithm.
 Then $a_k \longrightarrow \widetilde{a }\in A$ and $b_k \longrightarrow \widetilde{b} \in B$ with $\widetilde{b}-\widetilde{a }=v$
 where $v:=P_{B-A}0$. Moreover, there exist $M>0$ and $r_0\in\left]0,1\right[$ such that for every $k\in\NN$
\begin{align}
\|a_k-\widetilde{a}\|\leq
 \begin{cases}M\frac{1}{k^{\rho}},\,&\text{if}\,\, d>1;\\
M r^k_0,\,&\text{if}\,\, d=1
\end{cases}\quad \text{ and }\quad \|b_k,-b\| \le \begin{cases}M\frac{1}{k^{\rho}},\,&\text{if}\,\, d>1;\\
M r_0^k,\,&\text{if}\,\, d=1
\end{cases},
\end{align}
where $\rho:=\frac{1}{\min\big\{(2d-1)^n-1,\, 2\beta(n-1)d^n-2\big\}}$ and $\beta(n-1)$ is the central binomial coefficient with respect to $n-1$ which is given by ${n-1 \choose {[(n-1)/2] }}$.
\end{theorem}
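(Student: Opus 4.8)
The plan is to reduce the infeasible case to the feasible case already handled in Corollary~\ref{ATMCoR:1} by passing to shifted sets, exactly as in the scheme of \cite[Theorem~3.12]{Heinz_Set}. First I would invoke Lemma~\ref{LemmClos:1} and Remark~\ref{cor:attain} to know that $B-A$ is closed and convex, that $v:=P_{B-A}0$ is well defined, and that $\di(A,B)=\|v\|$ with $v=\bar b-\bar a$ for some $\bar a\in A$, $\bar b\in B$. Then I would apply Fact~\ref{FactPr:1}: the sequences $(a_k)$ and $(b_k)$ satisfy $a_k\to\widetilde a$ and $b_k\to\widetilde a+v=:\widetilde b$ for some $\widetilde a\in A$, and moreover $P_B x=x+v$ for all $x\in A\cap(B-v)$ and $P_A y=y-v$ for all $y\in B\cap(A+v)$. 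This already gives the qualitative convergence statement $a_k\to\widetilde a\in A$, $b_k\to\widetilde b\in B$, $\widetilde b-\widetilde a=v$.

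For the rate, the key observation is that, after one step, the iterates lie in the ``aligned'' sets: set $A':=A\cap(B-v)$ and $B':=B\cap(A+v)=A'+v$. By Fact~\ref{FactPr:1}(ii) the map $x\mapsto P_{B}P_{A}x$ restricted near $A'$ acts as $x\mapsto (P_A(x)+v)-v$ composed appropriately, so that the even-indexed sequence $(a_{2k})$ (or simply $(a_k)$ with the shift absorbed) behaves like a cyclic/alternating projection sequence for the pair $A'$ and $A'$ shifted back — more precisely, translating $B$ by $-v$ turns the $A$–$B$ alternating projection dynamics into the $A$–$(B-v)$ alternating projection dynamics with $A\cap(B-v)=A'\neq\varnothing$. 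Here is where I would need that $A-v$ (equivalently $B-v$) is still a \emph{basic semi-algebraic convex set of degree at most $d$}: this is immediate, since replacing $h_j(y)$ by $h_j(y+v)$ preserves convexity and degree. Hence Corollary~\ref{ATMCoR:1} applies to the pair $A$ and $B-v$, yielding $\|a_k-\widetilde a\|\le M k^{-\rho}$ (resp. $Mr_0^k$ if $d=1$), and correspondingly for $b_k=a_{k}$-image shifted by $v$, giving $\|b_k-\widetilde b\|$ with the same bound.

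The main obstacle is the bookkeeping needed to rigorously identify the infeasible alternating projection sequence with a feasible one after the shift by $v$: one must check that for $k\ge1$ the iterate $a_k$ already lies in (a neighborhood of) $A\cap(B-v)$, or at least that $\di(a_k,A\cap(B-v))$ and $\di(b_k,B\cap(A+v))$ are controlled by the quantities to which Corollary~\ref{ATMCoR:1} applies. The clean way is to use Fact~\ref{FactPr:1}(ii) to show that once $a_k\in A\cap(B-v)$ the subsequent dynamics stay there and coincide with the alternating projections between $A$ and $B-v$; and to handle the initial transient by noting $(a_k)$ and $(b_k-v)$ are both Fej\'er monotone with respect to $A\cap(B-v)$ (using nonexpansiveness of $P_A$, $P_B$ and Fact~\ref{FactPr:1}(i)) so that $\di(a_k,A\cap(B-v))\to0$, which licenses the application of Corollary~\ref{ATMCoR:1} from some index onward; absorbing the finitely many initial terms into the constant $M$ then yields the stated estimate for all $k\in\NN$.
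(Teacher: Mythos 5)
Your proposal begins correctly — Lemma~\ref{LemmClos:1}, Remark~\ref{cor:attain} and Fact~\ref{FactPr:1}(i) give the qualitative convergence $a_k\to\widetilde a$, $b_k\to\widetilde b=\widetilde a+v$. But the rate argument, where you try to reduce to Corollary~\ref{ATMCoR:1} by ``absorbing the shift $v$,'' contains a genuine gap. Shifting $B$ to $B-v$ does \emph{not} turn the infeasible alternating projection sequence into a feasible one. If you set $b_k':=b_k-v$, then $b_k'=P_B(a_k)-v$, whereas the feasible alternating projection between $A$ and $B-v$ produces $P_{B-v}(a_k)=P_B(a_k+v)-v$; these disagree except exactly on $A\cap(B-v)$, by Fact~\ref{FactPr:1}(ii). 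So the two dynamical systems are distinct maps with distinct orbits, and there is no index ``from which onward'' they coincide — the iterates approach the aligned sets but never enter them, so Corollary~\ref{ATMCoR:1} simply does not apply to the given sequence. The Fej\'er-monotonicity observation you invoke gives $\di(a_k, A\cap(B-v))\to 0$ but does not by itself license transferring the rate proved for a \emph{different} sequence.

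What the paper actually does avoids any such reduction. Since $a_k\in A$, one has $\di(a_k,A)=0$, and the H\"olderian regularity result (Theorem~\ref{ThesumSet:1}) applied to the pair $A$, $B-v$ — which have the nonempty intersection $A\cap(B-v)$ — gives directly
\[
\di\bigl(a_k,A\cap(B-v)\bigr)\le c_0\,\di^{\,1/r}\!\bigl(a_k,B-v\bigr).
\]
Then, crucially, one exploits Fact~\ref{FactPr:1}(ii) together with the firm nonexpansiveness of $P_A$ and $P_B$ (via \cite[Proposition~4.8]{BC2011}) to show, for any fixed $x\in A\cap(B-v)$,
\[
\di^2(a_k,B-v)\le\|a_k-x\|^2-\|a_{k+1}-x\|^2,
\]
and taking $x=P_{A\cap(B-v)}a_k$ yields the telescoping inequality $\di^2(a_k,B-v)\le \di^2(a_k,A\cap(B-v))-\di^2(a_{k+1},A\cap(B-v))$. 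Combining the two displays gives the recursion $\beta_{k+1}\le\beta_k(1-c_0^{-2r}\beta_k^{r-1})$ with $\beta_k:=\di^2(a_k,A\cap(B-v))$, to which Lemma~\ref{recur} applies; the rate on $\|a_k-\widetilde a\|$ then follows from Fej\'er monotonicity and Fact~\ref{FactPr:2}. Your outline omits this central firm-nonexpansiveness/telescoping step and substitutes for it a reduction that does not hold; as written the proposal would not yield the stated rate.
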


\begin{proof}
Lemma~\ref{LemmClos:1} implies that $B-A$ is closed. Then
by Fact~\ref{FactPr:1}(i), there exist $\widetilde{a} \in A,
\widetilde{b} \in B$ such that
$a_k \longrightarrow \widetilde{a }\in A$ and $b_k \longrightarrow \widetilde{b} \in B$ with $\widetilde{b}-\widetilde{a }=v$.
By Theorem~\ref{ThesumSet:1}, there exists $c_0>1$ such that
\begin{align}\label{ProRE:1}
\di(a_k, A \cap (B-v)) \le c_0 \big(\di(a_k,A)+\di(a_k,B-v)\big)^{\frac{1}{r}}= c_0 \di^{\frac{1}{r}}(a_k,B-v),
\end{align}
where $r:=\min\big\{\frac{(2d-1)^n+1}{2}, \beta(n-1)d^n\big\}$.
Fix $x \in A \cap (B-v)$. Note that $v=P_{B-A}0$ we have $P_B(x)=x+v$ by Fact~\ref{FactPr:1}(ii). This implies that
\begin{align*}
\di^2(a_k,B-v) &\le \|a_k-(b_k-v)\|^2= \|(a_k-x)-\big(b_k-(v+x)\big)\|^2 \\
&=  \|(a_k-x)-(P_Ba_k-P_Bx)\|^2 \\
& \le  \|a_k-x\|^2-\|P_Ba_k-P_Bx\|^2\quad \text{(by \cite[Proposition~4.8]{BC2011})}\\
& =  \|a_k-x\|^2-\|b_k-(x+v)\|^2 \\
& \le  \|a_k-x\|^2-\|P_Ab_k-P_A(x+v)\|^2 \\
& =  \|a_k-x\|^2-\|a_{k+1}-x\|^2\quad\text{(by Fact~\ref{FactPr:1}(ii))}.
\end{align*}
In particular, choose $x=P_{A \cap (B-v)}a_k$. Then, we have
\begin{eqnarray*}
\di^2(a_k,B-v) & \le & \di^2(a_k,A \cap (B-v))-\|a_{k+1}-P_{A \cap (B-v)}a_k\|^2 \\
&\le & \di^2(a_k,A \cap (B-v))-\di^2(a_{k+1},A \cap (B-v)).
\end{eqnarray*}
Combining with \eqref{ProRE:1}, we have
\begin{align}
\frac{1}{c_0^{2r}}\di^{2r}(a_k, A \cap (B-v))& \le \di^2(a_k,(B-v))\nonumber\\
 &\le \di^2(a_k,A \cap (B-v))-\di^2(a_{k+1},A \cap (B-v)).\label{PTsSrt:E1}
\end{align}
Thus
\[
\di(a_{k+1},A \cap (B-v))^2 \le \di(a_k,A \cap (B-v))^2 - \frac{1}{c_0^{2r}}\di(a_k, A \cap (B-v))^{2r}.
\]
Now, let $\beta_k:=\di(a_k,A \cap (B-v))^2$, $k \in \mathbb{N}$. Then, we have
\begin{align}
\beta_{k+1} \le \beta_k(1-\frac{1}{c_0^{2r}}\beta_k^{r-1}).\label{LreAl:1}
\end{align}
Now we consider two cases.

\emph{Case 1}: $d>1$.

In this case, we have $r>1$.
Applying the preceding Lemma~\ref{recur} with $\delta_k:=\frac{1}{c_0^{2r}}$ and $p:=r-1$, by \eqref{LreAl:1},
we see that
\[
\di^2(a_k,A \cap (B-v))= \beta_{k}\le\bigg(\beta_0^{1-r}+\displaystyle \frac{(r-1)}{c_0^{2r}} k \bigg)^{-\frac{1}{r-1}}\;\mbox{ for all }\;k\in\mathbb N.
\]
Thence there exists $M_0>0$ such that
\[
\di(a_k,A \cap (B-v)) \le M_0 \frac{1}{k^{\rho}},\quad\forall k\in\NN
\]
where $\rho:=\frac{1}{\min\big\{(2d-1)^n-1,\, 2\beta(n-1)d^n-2\big\}}$. Then,
\cite[Example~3.2]{Heinz_Set} shows that $(a_k)_{k\in\NN}$ is Fej\'{e}r monotone with respect to
$A \cap (B-v)$.
Thus, by Fact~\ref{FactPr:2},
\begin{align*}
\|a_k-\widetilde{a}\|\leq2\di(a_k,A \cap (B-v)) \leq 2M_0 \frac{1}{k^{\rho}},\quad\forall k\in\NN.
\end{align*}

\emph{Case 2}: $d=1$.

Thus $r=1$. Then by \eqref{LreAl:1}, $\di(a_{k+1},A \cap (B-v)) \le \theta\di(a_k, A \cap (B-v))$,
where $\theta:=\sqrt{1-\frac{1}{c_0^{2}}}$. Then $\theta <1$ since $c_0>1$.
  Hence there exists $M_1>0$ such that
\begin{align*}\di(a_{k},A \cap (B-v)) \le M_1 \theta^k.
\end{align*}
From \cite[Example~3.2]{Heinz_Set}, we see that $(a_k)_{k\in\NN}$ is Fej\'{e}r monotone with respect to
$A \cap (B-v)$.
Thus, by Fact~\ref{FactPr:2},
\begin{align*}
\|a_k-\widetilde{a}\|\leq 2\di(a_{k},A \cap (B-v)) \leq 2M_1 \theta^k.
\end{align*}
Set $M_2:=\max\{2M_0,2M_1\}$. Combining the above two cases, we have
\begin{align*}
\|a_k-\widetilde{a}\|\leq
 \begin{cases}M_2\frac{1}{k^{\rho}},\,&\text{if}\,\, d>1;\\
M_2 \theta^k,\,&\text{if}\,\, d=1
\end{cases},\quad\forall k\in\NN.
\end{align*}

Similarly,  we can show that there exist $L>0$ and $\eta\in\left]0,1\right[$ such that
\[
\|b_k-\widetilde{b}\|\leq \begin{cases}L\frac{1}{k^{\rho}},\,&\text{if}\,\, d>1;\\
L \eta^k,\,&\text{if}\,\, d=1
\end{cases},\quad\forall k\in\NN.
\]
Therefore, the conclusion follows by taking $M:=\max\{M_2,L\}$ and $r_0:=\max\{\theta,\eta\}$. \qed
\end{proof}

\setcounter{equation}{0}
\section{Examples and remarks}{\label{s:final}

In this section, we will provide several examples of the rates of  convergence of the cyclic projection algorithm
and  the von Neumann alternating projection algorithm. We first start with some examples where the basic semialgebraic convex
sets are described by convex quadratic functions. Subsequently, we will examine examples where the basic semialgebraic convex
sets are described by higher degree convex polynomials.

\vspace{-0.1cm}
\subsection*{Basic semialgebraic convex sets described by convex quadratic functions}
\begin{example}
Let \begin{align*}C_1&:=\{(x,y)\in\RR^2\mid (x+1)^2+y^2-1\leq0\}\\
C_2&:=\{(x,y)\in\RR^2\mid x+y-1\leq0\}\\
C_3&:=\{(x,y)\in\RR^2\mid (x-1)^2+ y^2-1\leq0\}\\
C_4&:=\{(x,y)\in\RR^2\mid x+(y+2)^2-4\leq0\}.
\end{align*}
Take $x_0\in\RR^2$. Let $(x_k)_{k\in\NN}$ be defined by
\begin{align*}
x_1:=P_1 x_0, \,x_2:=P_2 x_1,\, x_3:=P_3 x_2,\, x_4:=P_4x_{3},\, x_{5}:=P_1x_{4}\ldots
\end{align*}
Then
$\|x_k\|= O(\frac{1}{k^{\frac{1}{6}}})$.
\qede \end{example}
\begin{proof}
Clearly, $\bigcap_{i=1}^4C_i=\{0\}$.  Then apply  $n=2$ and $ d=2$ to Theorem~\ref{TheMTR:1}.
\end{proof} \qed

\begin{example}\label{Examp:1Gefor}
Let $\alpha\geq0$ and  \begin{align*}A&:=\big\{(x,y)\in\RR^2\mid (x+1)^2+y^2-1\leq0\big\}=(-1,0)+\overline{\mathbb{B}}(0,1)\\
B&:=\big\{(x,y)\in\RR^2\mid -x+\alpha\leq0\big\}.
\end{align*}
 Let $(a_k)_{k\in\NN}:=(u_k, v_k)_{k\in\NN}$ and $(b_k)_{k\in\NN}
 :=(s_k, t_k)_{k\in\NN}$ be defined by
\begin{align*}
b_0\in\RR^2,\quad a_{k+1}:=P_A b_k, \quad b_{k+1}:=P_Ba_{k+1}.
\end{align*}
Then for every $k\geq 2$
\begin{align*}
b_{k}&=\Big(\alpha, \frac{t_1}{\sqrt{(1+\alpha)^{2(k-1)}+t^2_1
\sum_{i=0}^{k-2}((1+\alpha)^{2i}}}\Big)\\
a_{k+1}&=\Big(-1+\frac{\alpha+1}{\sqrt{(\alpha+1)^2+\frac{t^2_1}{{(1+\alpha)^{2(k-1)}+t^2_1
\sum_{i=0}^{k-2}((1+\alpha)^{2i}}}}},
\frac{t_1}{\sqrt{(1+\alpha)^{2k}+t^2_1
\sum_{i=0}^{k-1}((1+\alpha)^{2i}}} \Big).
\end{align*}
Consequently, $a_k\longrightarrow0$ and $b_k\longrightarrow (\alpha,0)$  at the rate of $k^{-\frac{1}{2}}$ when $\alpha=0$.
When $\alpha\neq0$(then $A\cap B=\varnothing$), $a_k\longrightarrow0$ and $b_k\longrightarrow (\alpha,0)$  at the rate of $(1+\alpha)^{-k}$. \qede
\end{example}

\begin{proof}
We first claim that
\begin{align}
b_{k+1}=\big(\alpha, t_{{k+1}}\big)=\big(\alpha, \frac{t_k}{\sqrt{(1+\alpha)^2+t^2_k}}\big),\quad\forall k\geq 1.\label{ExAlPr:a1}
\end{align}
By \cite[Examples~3.17\&3.21 and Proposition~3.17]{BC2011}, we have
\begin{align}\label{ExAlPr:1}
P_A(x,y)&=(-1,0)+\frac{(x+1,y)}{\max\big\{1,\|(x+1,y)\|\big\}},\quad\forall (x,y)\in\RR^2\\
P_B(x,y)&=(\alpha,y),\quad\forall (x,y)\notin \inte B.\nonumber
\end{align}
Let $k\geq 1$. Since $A\cap B=\{0\}$ or $A\cap B=\varnothing$, $a_{k}\notin \inte B$. Then by \eqref{ExAlPr:1},  $b_k=(\alpha, v_{k})$ and then
\begin{align*}
a_{k+1}&=P_Ab_k=(-1,0)+\frac{(1+\alpha,v_{k})}{\max\big\{1,\|(1+\alpha,v_{k})\|\big\}}
=(-1,0)+\frac{(1+\alpha,v_{k})}{\sqrt{(1+\alpha)^2+v^2_{k}}}\\
b_{k+1}&=P_B(a_{k+1})=(\alpha, \frac{v_{k}}{\sqrt{(1+\alpha)^2+v^2_{k}}})=(\alpha, \frac{t_{k}}{\sqrt{(1+\alpha)^2+t^2_{k}}}).
\end{align*}
Hence \eqref{ExAlPr:a1} holds.
Next we show that
\begin{align}
b_{k}=\Big(\alpha, \frac{t_1}{\sqrt{(1+\alpha)^{2(k-1)}+t^2_1
\sum_{i=0}^{k-2}((1+\alpha)^{2i}}}\Big),\quad\forall k\geq 2.\label{ExAlPr:3}
\end{align}
We prove \eqref{ExAlPr:3} by the induction on $k$.

By \eqref{ExAlPr:a1}, \eqref{ExAlPr:3} holds when $k=2$.  Now assume  that
 \eqref{ExAlPr:3} holds when $k=p$, where $p\geq 2$. Now we consider the case of $k=p+1$.  By the assumption, we have
 \begin{align}
b_{p}=\Big(\alpha, \frac{t_1}{\sqrt{(1+\alpha)^{2(p-1)}
+t^2_1\sum_{i=0}^{p-2}((1+\alpha)^{2i}}}\Big).\label{ExAlPr:4}
\end{align}
Then by \eqref{ExAlPr:a1}, we have
\begin{align*}
b_{p+1}&=\Big(\alpha, \frac{t_{p}}{\sqrt{(1+\alpha)^2+t^2_{p}}}\Big)\\
&=\Big(\alpha, \dfrac{ \dfrac{t_1}{\sqrt{(1+\alpha)^{2(p-1)}
+t^2_1\sum_{i=0}^{p-2}((1+\alpha)^{2i}}}}
{\sqrt{(1+\alpha)^2+\dfrac{t^2_1}
{(1+\alpha)^{2(p-1)}+t^2_1\sum_{i=0}^{p-2}(1+\alpha)^{2i}}}}
\Big)\\
&=\Big(\alpha, \frac{t_1}{\sqrt{(1+\alpha)^{2p}+t^2_1
\sum_{i=0}^{p-1}((1+\alpha)^{2i}}}\Big).
\end{align*}
Hence \eqref{ExAlPr:3} holds.

Combining\eqref{ExAlPr:1} and \eqref{ExAlPr:3}, we have for every $k\geq 2$
\begin{align*}
&a_{k+1}=P_A b_k\\
&=\Big(-1+\frac{\alpha+1}{\sqrt{(\alpha+1)^2
+\frac{t^2_1}{{(1+\alpha)^{2(k-1)}+t^2_1
\sum_{i=0}^{k-2}((1+\alpha)^{2i}}}}},
\frac{t_1}{\sqrt{(1+\alpha)^{2k}+t^2_1
\sum_{i=0}^{k-1}((1+\alpha)^{2i}}} \Big).
\end{align*}
Hence $a_k\longrightarrow0$ and $b_k\longrightarrow (\alpha,0)$  at the rate of $k^{-\frac{1}{2}}$ when $\alpha=0$.
When $\alpha\neq0$, $a_k\longrightarrow0$ and $b_k\longrightarrow (\alpha,0)$  at the rate of $(1+\alpha)^{-k}$.
\end{proof} \qed

\begin{remark}
According to Theorem~\ref{TheAPM:1},we can only deduce that $(a_k)_{k\in\NN}$   in Example~\ref{Examp:1Gefor} converges to
$(0,0)$ and $(b_k)_{k\in\NN}$ converge to
$(\alpha,0)$ at the rate of at least of $k^{-\frac{1}{6}}$. \qede
\end{remark}

\begin{example}\label{ex:5.5}
Let \begin{align*}A&:=\big\{(x,y)\in\RR^2\mid (x+1)^2+y^2-1\leq0\big\}\\
B&:=\big\{(x,y)\in\RR^2\mid (x-1)^2+y^2-1\leq0\big\}.
\end{align*}
 Let $(x_k, y_k)_{k\in\NN}$ be defined by
\begin{align*}
(x_0,y_0)\in\RR^2, (x_1, y_1):=P_A (x_0,y_0), (x_2, y_2):=P_B (x_1,y_1), (x_3, y_3):=P_A (x_2,y_2),\quad \cdots.
\end{align*}

Note that   \begin{align*}P_A (x,y)&=\left(-1+\frac{x-1}{\sqrt{(x+1)^2+y^2}},\frac{y}{\sqrt{(x+1)^2+y^2}}\right),\quad  \forall (x,y) \in \RR_+\times\RR_{++}\\
P_B
(x,y)&=\left(1+\frac{x+1}{\sqrt{(x-1)^2+y^2}},\frac{y}{\sqrt{(x-1)^2+y^2}}\right),\quad
\forall  (x,y) \in \RR_-\times\RR_{++}.
\end{align*}
Figure \ref{fig:2circ}  depicts the algorithm's trajectory with
starting point $(0,2)$.
\begin{figure}\begin{center}
\includegraphics[scale=0.63,angle=90]{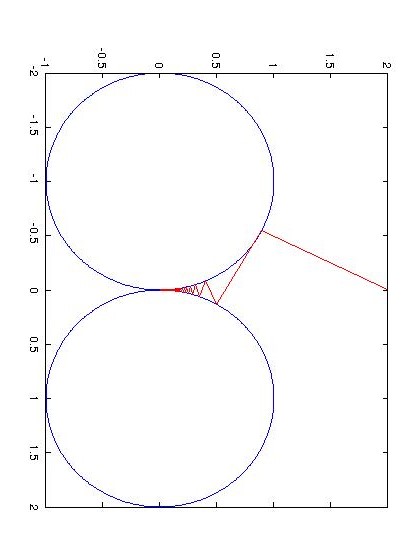}
\caption{The iteration commencing at $(0,2)$.}\label{fig:2circ}
\end{center}
\end{figure}

Suppose, without loss of generality, that one starts on a point on one the half-circles nearest the other circle.
Then the distance from zero (for every $k\in\NN$), $r_k:=\sqrt{x^2_k+y^2_k}$ satisfies
$r_k^2=2 \alpha_k$
where $\alpha_k:=|x_k|$ since $(x_k,y_k)\in\bd A\cup\bd B$. Hence
\[1-\alpha_{k+1}= \frac{1+\alpha_k}{\sqrt{1+4\alpha_k}}.\]
Linearizing, we obtain that $w_k:=4\alpha_k$ approximately satisfies the logistics equation
\[w_{k+1} \approx w_k(1-w_k)\]
This can be explicitly solved by writing
\[\frac 1{w_{k+1}}-\frac 1{w_{k}}=\frac{1}{1-w_k}\]
When summing and dividing by $N$, leads to
\begin{align}\lim_{N\to \infty}\frac 1{N w_N} &=\lim_{N\to \infty}\left(\frac 1{N w_N}-\frac 1{N w_{0}}\right)\\\nonumber &=\lim_{N\to \infty}\frac{1}N\sum_{k=0}^{N-1}\frac{1}{1-w_k}\\& = \lim_{N\to \infty}\frac1{1-w_N}=1,\nonumber
\end{align}
since C\'esaro summability is conservative and $w_N \to 0$.
Hence $\alpha_k \thicksim 1/(4k)$ and so \[\sqrt{x_k^2+y_k^2} = r_k \thicksim \frac 1{\sqrt{2k}}.\]
For instance, with $\alpha_0=1,N=10^6$, we obtain $\alpha_N \approx 0.0000002499992442.$
A similar analysis can be performed in the previous example.
\qede \end{example}

\begin{remark}
According to Theorem~\ref{TheAPM:1}, we can only deduce that $(a_k)_{k\in\NN}$ and  $(b_k)_{k\in\NN}$  in Example~\ref{ex:5.5} converge to
$(0,0)$ at the rate of at least of $k^{-\frac{1}{6}}$. \qede
\end{remark}

\subsection*{Basic semialgebraic convex sets described by convex polynomials}
In general, identifying the exact convergence rate of the cyclic
projection method in a direct way can be quite arduous when applied to finitely many basic semialgebraic convex sets. Below, we provide two simple examples to
illustrate how the convergence rate depends on the maximum degree of the polynomials that described the basic semialgebraic convex sets and on the dimension of the underlying space.

\begin{example}\label{Examp:1La}
Let $A,B$ be defined by \begin{align*}
A&:=\big\{(x,y)\in\RR^2\mid x\leq0\big\}\\
B&:=\big\{(x,y)\in\RR^2\mid y^d-x\leq0\big\},
\end{align*}
where $d$ is an even number.
 Let $(a_k)_{k\in\NN},\, (b_k)_{k\in\NN}
 :=(x_k, y_k)_{k\in\NN}$ be defined by
\begin{align*}
b_0:=(x_0,y_0)\in\RR\times \RR_{++}\,\text{with}\, \|b_0\|\leq 1,\quad a_{k+1}:=P_A b_k, \quad b_{k+1}:=P_Ba_{k+1}.
\end{align*}
Then for every $k\in\NN$
\begin{align*}
a_{k+1}=(0,y_k),\quad b_k=(y^d_k, y_k),\quad\text{and}\quad
d \, y_{k+1}^{2d-1}+y_{k+1}=y_k.
\end{align*}
Consequently,
$(a_k)_{k\in\NN}$ and $(b_k)_{k\in\NN}$ converge to
$0$ at the rate of at least of $k^{-\frac{1}{2d-2}}$.
\end{example}
\begin{proof}
Let $k\in\NN$. Since $A\cap B=\{0\}$, $b_{k}\notin \inte A$.
Since $b_k\in\bd B$ and then $x_k=y^d_k$. Thus
$a_{k+1}=P_Ab_{k}=P_A(x_{k},y_{k})=(0,y_{k})$.  Then we have $y_k \longrightarrow 0$,
\begin{align*}b_{k+1}=(x_{k+1},y_{k+1})=(y^d_{k+1},y_{k+1})=P_B a_{k+1}=P_B(0,y_{k}).
\end{align*}
Thus $y_{k+1}$ is a minimizer of the function
\begin{align*}
y\mapsto\|(y^d,y)-(0,y_{k})\|^2=
\|(y^d,y-y_{k})\|^2=y^{2d}+y^2-2yy_k+y^2_k.
\end{align*}
Thus
$(d \, y_{k+1}^{2d-2}+1) y_{k+1}-y_k=d \, y_{k+1}^{2d-1}+y_{k+1}-y_k=0.$
Then we have
\[
\frac{1}{(d \, y_{k+1}^{2d-2}+1)^{2d-2}} \frac{1}{y_{k+1}^{2d-2}}=\frac{1}{y_k^{2d-2}}
\]
Now, for the function $h(x):=\frac{1}{(x+1)^{2d-2}}$, we have $$h'(x)=-(2d-2)\frac{1}{(x+1)^{2d-1}} \mbox{ for all } x \in \mathbb{R}.$$
Note that $y_k \longrightarrow 0$. So, for all large $k$,
\[
 \frac{1}{(d \, y_{k+1}^{2d-2}+1)^{2d-2}} =h(d \, y_{k+1}^{2d-2}) \approx h(0) + h'(0) d \, y_{k+1}^{2d-2}= 1-d(2d-2)y_{k+1}^{2d-2}.
\]
This gives us that for all large $k$,
\[
\frac{1}{y_k^{2d-2}}\approx \big(1-d(2d-2)y_{k+1}^{2d-2}\big) \frac{1}{y_{k+1}^{2d-2}}=  \frac{1}{y_{k+1}^{2d-2}} -d(2d-2).
\]
In other words, there exists $l_0 \in \mathbb{N}$ such that for all $l \ge l_0$
\begin{equation}\label{eq:yyu}
  \frac{1}{y_{l+1}^{2d-2}}-\frac{1}{y_l^{2d-2}} \approx d(2d-2).
\end{equation}
Let $k>l_0$ . Summing the above relation from $l=l_0$ to $k$, we have
\[
 \frac{1}{y_{k}^{2d-2}}-\frac{1}{y_{l_0}^{2d-2}} \approx d(2d-2)(k-l_0+1)
\]
and so,
\[
y_{k} \approx  \bigg(\frac{1}{y_{l_0}^{2d-2}} + d(2d-2)(k-l_0+1)\bigg)^{-\frac{1}{2d-2}}= O(k^{-\frac{1}{2d-2}}).
\]
%
%
%
Thence $(a_k)_{k\in\NN}$ and $(b_k)_{k\in\NN}$ converge to
$0$ at that rate.
\end{proof} \qed

\begin{remark}
Similarly, according to Theorem~\ref{TheAPM:1}, we can only deduce that $(a_k)_{k\in\NN}$ and  $(b_k)_{k\in\NN}$  in Example~\ref{Examp:1La} converge to
$(0,0)$ at the rate of at least of ${k^{-\frac{1}{2d^2-2}}}$.
\end{remark}


\begin{example}
Let \begin{align*}A&:=\big\{(x_1,\ldots,x_n)\in\RR^n\mid (x_1+1)^4+\sum_{i=2}^nx_i^4-1\leq0\big\}\\
B&:=\big\{(x_1,\ldots,x_n)\in\RR^n\mid (x_1-2)^4+\sum_{i=2}^nx_i^4-1\leq0\big\}.
\end{align*}
 Let $(a_k)_{k\in\NN}, (b_k)_{k\in\NN}$ be defined by
\begin{align*}
b_0\in\RR^2,\quad a_{k+1}:=P_A b_k, \quad b_{k+1}:=P_Ba_{k+1}.
\end{align*}
Then
$\|a_k\|= O(\frac{1}{k^{\rho_n}}), \|b_k-(1,0,\ldots,0))\|= O(\frac{1}{k^{\rho_n}})$ with $\rho_n=\frac{1}{\min\big\{7^n-1,\, 2\beta(n-1)4^n-2\big\}}$.
\qede \end{example}
\begin{proof} By the assumption, there exist unique points $a_0\in\bd A,\, b_0\in\bd B$ such that $1=\di(A,B)=\|a_0-b_0\|$. Clearly, $a_0=(0,0,\ldots,0)$ and $b_0=(1,0,\ldots,0)$.
  Then, the conclusion follows by applying Theorem~\ref{TheAPM:1} with $d=4$.
\end{proof} \qed

\vspace{-0.1cm}
\section{Conclusion and Open Questions}
In this paper, we studied the rate of convergence of the cyclic projection algorithm applied to  finitely many
basic semi-algebraic convex sets.  We established an explicit convergence rate estimate
 which relies on the maximum degree of the polynomials that generate the basic semi-algebraic convex sets and the dimension of the underlying space. We also examined some concrete  examples and compared the actual convergence rate with our estimate.

 Although our estimate of the convergence rate works for cyclic projection algorithm with finitely many basic semialgebraic convex sets without any regularity condition, the limitation of our approach is that the estimated convergence rate behaves quite badly when the dimension $n$ of the underlying space and the maximal degree $d$ are large. Our results have suggested the following future research topics and open questions:
 \begin{itemize}
   \item The explicit examples (Examples \ref{Examp:1Gefor}, \ref{ex:5.5} and \ref{Examp:1La}) show that, in general, our estimate of the convergence rate of the cyclic projection algorithm will not be tight. It would be interesting to see how one can sharpen the estimate obtained in this paper and get a tight estimate for the cyclic projection algorithm. In particular, finding the right exponent when each set is defined by convex quadratic functions would be a good starting point.
       \item Can we extend the approach here to analyze the convergence rate of the Douglas-Rachford algorithm? Almost nothing is known except for affine sets.
 \end{itemize}
 These will be our future research topics and will be examined later on.
 \vspace{-0.1cm}
\section*{Acknowledgments}
\vspace{-0.1cm}
The authors are grateful to
Dr.\ Simeon Reich and  the two anonymous referees and the editor
for their pertinent and constructive comments.
Jonathan  Borwein, Guoyin Li and Liangjin Yao were partially
supported by various Australian Research Council grants.

\footnotesize

\end{document}